\newcommand{\sobonorm}[2]{\left|\left| #1 \right|\right|_{\dot H^{#2}}}
\newcommand{\PV}{{\text{P.V.}}}
\newcommand{\s}{\mathbf{s}}
\newcommand{\m}{\mathbf{m}}
\newcommand{\h}{\mathbf{h}}
\newcommand{\HH}{\mathcal{H}}
\newcommand{\oo}{\operatorname{o}}
\newcommand{\OO}{\operatorname{O}}
\newcommand{\iu}{{\mathrm{i}\mkern1mu}\hspace{0.02cm}}
\newcommand{\spinTE}[2]{
\dot{\mathbf{#1}}_j(t) = 2 \iu \sum_{k\neq j}^N \frac{\mathbf{#1}_j(t)\times \mathbf{#1}_k(t)}{\left( {#2}_j(t)-{#2}_k(t) \right)^2}
}
\newcommand{\poleTE}[2]{
\ddot{#2}_j(t) = -4 \sum_{k\neq j}^N \frac{\mathbf{#1}_j(t)\cdot \mathbf{#1}_k(t)}{\left( {#2}_j(t)-{#2}_k(t) \right)^3}
}
\newcommand{\vardbtilde}[1]{\tilde{\raisebox{0pt}[0.85\height]{$\tilde{#1}$}}}
\newcommand{\im}{\operatorname{Im}}
\newcommand{\re}{\operatorname{Re}}
\newcommand{\dd}{\operatorname{d}\hspace{-0.05cm}}
\newtheorem{theorem}{Theorem}
\newtheorem{definition}{Definition}[section]
\newtheorem{lemma}[definition]{Lemma}
\newtheorem{corollary}[definition]{Corollary}
\newtheorem{proposition}[definition]{Proposition}
\title{Scattering of Rational Solutions to the Half-Wave Maps Equation}
\author{Gaspard Ohlmann}
\date{}
\begin{document}
	
	\maketitle	

        \section*{Abstract}

    This article studies the rational solutions of the Half-Wave Maps equation (HWM) in the non-singular spectrum case. We first provide characterizations to what we call \emph{scattering behavior}, and show that they imply scattering in Sobolev norm. We then provide a local condition implying \emph{scattering behavior}. Building on this, we show that any solution with non-singular spectrum scatters and provide an explicit formula for the function to which the solution is scattering. This allows us to show that the scattering map is the identity. Additionally, we create, for any given number of spins and any target non-singular spectrum, global solutions of (HWM) with a spectrum arbitrarily close to the target. Finally, we show, using a diagonal characterization of traveling waves, that if a wave scatters to a traveling wave, it is a scattering wave. \\

	\tableofcontents

    \section{Introduction}

    \subsection{The Half-Wave maps equation}

    In this article, we are interested in the half-wave maps equation that arises in multiple contexts, and can be seen as a continuum version of discrete completely integrable spin Calogero–Moser models \cite{zhou2015solitons} \cite{lenzmann2020derivation} \cite{berntson2022spin}. For a recent survey, we refer the reader to \cite{lenzmann2018short}. The Half-Wave maps equation reads
    \begin{equation}\tag{HWM-S}\label{HWMS}
        \partial_t \m(t,x) = \m(t,x) \times |\nabla| \m(t,x) = \m(t,x) \times \mathcal{H}(\partial_x m) (t,x),
    \end{equation}
    where $\m(t,\cdot)$ is a function from the real line to the sphere, i.e.
    \begin{equation*}
        m:(t,x) \in \mathbb{R}^2 \to \mathbb{S}^2,~\mathbb{S}^2 = \{ u \in \mathbb{C}^3, ~|u|=1 \},
    \end{equation*}
    and $\mathcal{H}$ is the Hilbert transform defined as 
    \begin{equation*}
        \mathcal{H}(f)(x) = \PV \frac{1}{\pi} \int_{y \in \mathbb{R}} \frac{f(y)}{x-y} \dd y.
    \end{equation*}
    In dimension one, this equation is energy-critical and preserves the norm
    
    \begin{equation*}
        ||f||_{\dot H^{1/2}(\mathbb{R})}^2 = \int_{\xi \in \mathbb{R}} |\hat f(\xi)|^2 |\xi| \dd\xi, \quad\text{where }
        \mathcal{F}(f)(\xi) = \hat f(\xi) = \frac{1}{\sqrt{2\pi}} \int_{x \in \mathbb{R}} e^{- \iu x \xi} f(x) \dd x.
    \end{equation*}

    We are interested in this article in the cases of rational functions, i.e. functions for which we can make the ansatz
    \begin{equation}\label{RATS}\tag{RAT-S}
            \m(t,x) = \m_0 + \sum_{j=1}^N \frac{\s_j(t)}{x-x_j(t)} + \sum_{j=1}^N \frac{\bar \s_j(t)}{x-\bar x_j(t)},~\im(x_j(t)) >0.
    \end{equation}
    
    It has been proven in \cite{gerard2018lax} that solutions that are rational for a given time $t_0\in \mathbb{R}$ stay rational for all times. As shown in \cite{berntson2020multi}, the study of this problem comes down to the study of a Calogero-Moser system of differential equations, studied for instance, in \cite{calo1}, \cite{calo2} or \cite{calo3}, where the dynamic of the spins and the poles is governed by 
    \begin{equation}\label{CMS}\tag{CM-S}
    \left\{
    \begin{aligned}
        &\spinTE{s}{x},~1 \leq j \leq N,\\
        &\poleTE{s}{x},~1 \leq j \leq N,
    \end{aligned}
    \right.
    \end{equation}
    and the dot product is defined without conjugation as $a \cdot b = \sum_{i=1}^3 a_i b_i$ for $a, b \in \mathbb{C}^3 $. If at $t_0$, $\m(t_0,x)$ belongs to the sphere for all $x$, then this property is preserved by the equation as $\partial_t \left(|\m|^2\right)=2 \langle \m \times |\nabla| \m,\m \rangle = 0$. Satisfying the additional constraint at any time $t_0$
    \begin{equation}\label{CONSS}\tag{CONS-S}
        \s_j(t_0)^2 =0,~ \s_j(t_0)\cdot \left(\m_0+\sum_{k=1}^N \frac{\s_k(t_0)}{x_j(t_0)-x_k(t_0)} + \sum_{k=1}^N \frac{\bar \s_k(t_0)}{x_j(t_0) - \bar x_k(t_0)} \right) =0,
    \end{equation}
    guarantees that the constraint, and this condition, is always satisfied for any time $t \in \mathbb{R}$. \\
    Only a handfull of solutions are known for this equation. In \cite{lenzmann2018energy}, Lenzmann and Schikorra provide an explicit characterization of the traveling waves, and in \cite{berntson2020multi}, the authors provide a class of solutions for which the expression is known for all times. In this article, we provide the existence of solutions whose spectrum are arbitrarily close to any given non-singular spectrum.
    
    In addition, well-posedness and behavior for large times are active subjects of research. In dimensions $d \geq 3$, global existence results are established for small data, see \cite{krieger2017small}, \cite{kiesenhofer2021small} or \cite{liu2021global}. In this article, we describe the scattering map for any solution with non-singular spectrum, and show that it is in fact the identity. We also provide an explicit formula for the function to which the solution is scattering.

    We now present an equivalent formulation found in \cite{gerard2018lax} - inspired for instance by the study of the classical Heisenberg model in \cite{takhtajan1977integration} - the $2\times 2$ Matrix formulation. With $\sigma_j \in \mathrm{SU}(2)$ the standard Pauli matrices, we can associate (bijectively) to a spin $\s \in \mathbb{C}^3$ its matrix representation $A\in \mathbb{C}^{2\times 2}$ through 
    \[
    A = \s \cdot \sigma = \sum_{j=1}^3 \s_j \sigma_j = \begin{pmatrix}
            \s_j[3] & \s_j[1]- i \s_j[2] \\
            \s_j[1] + i \s_j[2] & - \s_j[3]
        \end{pmatrix}.
    \]
    Then, the properties 
    \[
    \left\{
    \begin{aligned}
        &\left( \s\cdot \sigma \right) \left( \mathbf{t}\cdot \sigma \right) = (\s \cdot \mathbf{t}) I_N + i (\s \times \mathbf{t}) \cdot \sigma,~|\m|=1 \text{ if and only if } M^2 = I_N, \\
        &\left( \s \times \mathbf{t} \right) \cdot \sigma = -\frac{\iu}{2}\left[ \s\cdot \sigma, \mathbf{t}\cdot \sigma \right],
    \end{aligned}
    \right.
    \]
    allow us to rewrite \eqref{HWMS}, \eqref{RATS}, \eqref{CMS} and \eqref{CONSS} into their matrix variant, with $A_j = \s \cdot \sigma$ and $M = \m \cdot \sigma$. 
    
    \begin{equation}\label{HWMM}\tag{HWM-M}
        \partial_t \m(t,x) = -\frac{\iu}{2} \left[ \m(t,x),|\nabla| \m(t,x) \right],
    \end{equation}
    
    \begin{equation}\label{RATM}\tag{RAT-M}
        \m(t,x) = \m_0 + \sum_{j=1}^N \frac{A_j(t)}{x-x_j(t)} + \sum_{j=1}^N \frac{A_j^*(t)}{x-\bar x_j(t)},~\im (x_j(t)) >0,
    \end{equation}
    
    \begin{equation}\label{CMM}\tag{CM-M}
    \left\{
    \begin{aligned}
        &\partial_t A_j(t) = \sum_{k\neq j} \frac{\left[ A_j(t),A_k(t) \right]}{(x_j(t)-x_k(t))^2}\\
        &\ddot x_j(t) = -2 \sum_{k\neq j}^N \frac{Tr(A_j(t)A_k(t))}{(x_j(t)-x_k(t))^3}.
    \end{aligned}
    \right.
    \end{equation}
    
    \begin{equation}\label{CONSM}\tag{CONS-M}
    A_j^2 =0,~ A_j B_j + B_j A_j =0,\quad B_j = \m_0 + \sum_{k\neq j}^N \frac{A_k}{x_k-x_j} + \sum_{k=1}^N \frac{A_k^*}{x_j-\bar x_k}.
    \end{equation}

    In \cite{gerard2018lax}, with $a = a(t,x) \in \mathbb{R}^3$ a given function, $A(t,x)=a(t,x)\cdot \sigma$ its corresponding representation in the Lie algebra $\mathrm{SU}(2)$, and $\mu_A$ the operator
    \begin{equation*}
    \mu_A:
    \left\{
    \begin{aligned}
        &\phi \in \left(\mathbb{R}\to \mathbb{C}^2 \right) \mapsto \mu_A \phi \in \left(\mathbb{R}^2 \to \mathbb{C}^2 \right),\\
        &(\mu_A \phi) (t,x) = A(t,x) \phi(x),
    \end{aligned}
    \right.
    \end{equation*}
    Gérard and Lenzmann establish the Lax-Pair 
    \begin{equation*}
        \partial_t \mathcal{L} = [\mathcal{B},\mathcal{L}],
    \end{equation*}
    where $\mathcal{L} = [H,\mu_M]$ and $\mathcal{B}= -\frac{i}{2} (\mu_M |\nabla| + |\nabla|\mu_M) + \frac{i}{2} \mu_{|\nabla| M} $. This formulation provides that rational solutions stay rational and that $Tr(|\mathcal{L}|^k)$ are preserved quantities. It is noteworthy that the Half-wave maps equation can also be seen as the zero-dispersion limit of the spin Benjamin-Ono equation introduced in \cite{berntson2022spin} for which a Lax-pair has been found in \cite{gerard2023lax}. For a study of the scalar zero-dispersion limit of the Benjamin-Ono equation, we refer the reader to \cite{gerard2024zero}.

    Finally, we introduce the notations and results from \cite{ohlmann2024halfwavemapsexplicitformulas} that we will use. In particular, the author introduces the half-spin formulation to establish an explicit formula for \eqref{HWMM}. Namely, for a rational solution $\m$ of \eqref{HWMM} given by \eqref{RATM}, there exists $N$ diagonal matrices $E_i(t)\in \mathbb{C}^{2\times 2}$ and $N$ diagonal matrices $F_i(t) \in \mathbb{C}^{2\times 2}$ such that for all $i$,
    \[
    E_i(t) H F_i(t) = A_j(t),\quad \text{ where } H = \begin{pmatrix}
        1 & 1 \\
        1 & 1 
    \end{pmatrix}.
    \]
    Morever, for such $E_i$ and $F_i$, with $\mathcal{E}(t) = \operatorname{Diag}(E_1,\dots,E_N)$ and $\mathcal{F}(t) = \operatorname{Diag}(F_1,\dots,F_N)$,
    $\mathcal{E}_0= \mathcal{E}(0)$ and $\mathcal{F}_0 = \mathcal{F}(0)$, the solution is given by 
    \[
    \Pi_- \left( \m(t,x) - \m_0 \right) = \sum_{j=1}^N \frac{A_j(t)}{x-x_j(t)} = - T^T \mathcal{E}_0 \mathcal{H} [X(0) + t L(0) - xI_N]^{-1} \mathcal{F}_0 T,
    \]
    where $\mathcal{H}=\operatorname{Diag}(H,\dots,H)\in \mathbb{C}^{2N \times 2N}$, $T$ is a column vector given by $N$ times the $2\times 2$ identity matrix, and for a matrix $P$, $[P]\in \mathbb{C}^{2N\times N}$ is the doubled matrix of $P$, composed of $N\times N$ blocs, where the block at position $(i,j)$ is given by $P_{ij} I_2$. Finally, with 
    \[
    E_i = \begin{pmatrix}
        e_i^1 & 0 \\
        0 & e_i^2
    \end{pmatrix},\quad F_i = \begin{pmatrix}
        \xi_i^1 & 0 \\
        0 & \xi_i^2
    \end{pmatrix},
    \]
    the matrix $L(0)$ is given by 
     \[
    L_{jk} = \delta_{j,k} \dot x_j(0) + (1-\delta_{jk}) \frac{\xi_j(0) \cdot e_k(0)}{x_j(0)-x_k(0)},\quad a \cdot b = \sum_{i} a_i b_i.
    \]
    This expression has also been proven correct in \cite{gerard2024global}. Note that $L$ is not hermitian. This expression of $L$ coincides with the one provided by Matsuno in \cite{matsuno2022integrability}. In this work, Matsuno provides a Lax-pair for this equation. By defining the following two matrices
\begin{equation}\label{defLBMatsuno}
    L_{i,j}(t) = \left\{
    \begin{aligned}
    & \dot x_i(t),~ i=j,\\
    &\varepsilon_{i,j} \frac{\sqrt{ - 2 s_i(t)\cdot s_j(t)}}{(x_i(t) - x_j(t))},~ i\neq j,
    \end{aligned}
    \right.
    \quad
    B_{i,j}(t) = \left\{
    \begin{aligned}
    & 0,~ i=j,\\
    &\varepsilon_{i,j} \frac{\sqrt{ - 2 s_i(t)\cdot s_j(t)}}{(x_i(t) - x_j(t))^2},~ i\neq j,
    \end{aligned}
    \right.
\end{equation}
he finds that the matrix $L(t)$ satisfies 

\begin{equation}\label{LaxMatsuno}
    \partial_t L(t) = [B(t),L(t)] = B(t) L(t) - L(t) B(t).
\end{equation}
With 

\begin{equation}\label{defS}
    S_{i,j}(t) = \left\{
    \begin{aligned}
        &0,~i=j,\\
        &\sqrt{-2 \s_i(t) \cdot \s_j(t)},~i\neq j,
    \end{aligned}
    \right.
\end{equation}
 $X(t)$ being a diagonal matrix such that $X_{j,j}(t) = x_j(t)$ and $U(t)$ the solution of $U(0)=I_N$ and $\dot U(t) = B(t) U(t)$, he establishes
\begin{equation}\label{propMatsuno}
    \left\{
        \begin{aligned}
            &L(t) = U(t) L(0) U(t)^{-1},\\
            &S(t) = U(t) S(0) U(t)^{-1},\\
            &X(t) = U(t) \left( X(0) + t L(0) \right) U(t)^{-1}.
        \end{aligned}
    \right.
\end{equation}
    and obtains in particular that 
    $X(t) \sim X(0)+tL(0)$. We will hence say that the solution $\m$ is \textbf{non-singular} when $v_1,\dots,v_N$, the eigenvalues of $L(0)$, are all distinct. This is equivalent to the eigenvalues of $L(t)$ being distinct for all times.

	\subsection{Results}

    We first prove in Section \ref{sec:local} the following theorem, providing a local sufficient condition for scattering. 

    \begin{theorem}\label{thm:localCondition}
        Let $\m$ be a rational solution of \eqref{HWMS} and define $x_j$, $\s_j$ as 
        \[
        \m(t,x) = \m_0 + \sum_{j=1}^N \frac{\s_j(t)}{x-x_j(t)} + \sum_{j=1}^N \frac{\bar \s_j(t)}{x-\bar x_j(t)}.
        \]
        Further define $S$, $\nu$, $D$ as
        \begin{equation}
        \left\{
        \begin{aligned}
            &S(t)x=\max_{j} |\s_j(t)|,\\
            &\nu(t)=\min_{j\neq k} | \re(\dot x_j(t)) -  \re(\dot x_k(t))|,\\
            &D(t)=\min_{j\neq k} |\re(x_j(t))- \re(x_k(t))|.
        \end{aligned}
        \right.
        \end{equation}
	    Then, if there exists $t_0$ such that 
        \begin{equation}
        \alpha(t_0) = D(t_0) - \frac{2^4 N S(t_0)}{\nu(t_0)} > 0,
        \end{equation}
        then $\m$ scatters to some function 
        \begin{equation}
        g(t,x) = \m_0 + \sum_{j=1}^N \frac{\textbf{a}_j}{x-v_j t - b_j} + \sum_{j=1}^N \frac{\bar{\textbf{a}}_j}{x-v_j t - \bar{b}_j}.
        \end{equation}
        Moreover, $x_j$ and $\s_j$ satisfy 
        \begin{equation}
        \left\{
        \begin{aligned}
            &x_j(t) - v_j t  = b_j + \oo(1),\\
            &\s_j(t) = \textbf{a}_j + \oo(1).
        \end{aligned}
        \right.
        \end{equation}
        
    \end{theorem}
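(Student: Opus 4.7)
The strategy I would take is a bootstrap/continuity argument propagating forward from $t_0$, exploiting the fact that the Calogero--Moser equations \eqref{CMS} have perturbation terms of size $|x_j-x_k|^{-2}$ for the spins and $|x_j-x_k|^{-3}$ for the pole accelerations. Once the real-part separations grow at a linear rate in time, both perturbations are integrable in $t$, which forces the spins to converge to limits $\textbf{a}_j$ and the poles to move ballistically, $x_j(t) = v_j t + b_j + \oo(1)$.

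After translating time so that $t_0=0$ and setting $D_0 = D(0)$, $\nu_0 = \nu(0)$, $S_0 = S(0)$, I would work on the maximal interval $[0, T^*)$ on which the bootstrap assumptions
\begin{align*}
&|\re(x_j(t) - x_k(t))| \geq D_0 + \nu_0 t/2, \quad j \neq k, \\
&|\s_j(t)| \leq 2\, S_0, \\
&|\re(\dot x_j(t) - \dot x_k(t))| \geq \nu_0/2, \quad j \neq k,
\end{align*}
hold simultaneously. Using $|x_j - x_k| \geq |\re(x_j - x_k)|$, the first assumption feeds directly into \eqref{CMS} and gives
\[
|\dot \s_j(t)| \leq \frac{C N S_0^2}{(D_0 + \nu_0 t/2)^2},\qquad |\ddot x_j(t)| \leq \frac{C N S_0^2}{(D_0 + \nu_0 t/2)^3},
\]
so that integration in time yields total variations bounded respectively by $C N S_0^2/(\nu_0 D_0)$ and $C N S_0^2/(\nu_0 D_0^2)$. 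The hypothesis $\alpha(0) > 0$, i.e.\ $D_0 \nu_0 > 2^4 N S_0$, is tailored so that the first is strictly smaller than $S_0$ (improving the spin bootstrap) and the second strictly smaller than $\nu_0/2$ (improving the velocity bootstrap); a further integration in time then improves the separation bootstrap as well, and a standard continuity argument forces $T^* = +\infty$.

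Once the bootstrap holds on $[0,+\infty)$, the integrability of $\dot \s_j$ and $\ddot x_j$ produces limits $\s_j(t) \to \textbf{a}_j$ and $\dot x_j(t) \to v_j$ with $|\re(v_j - v_k)| \geq \nu_0/2 > 0$. The tail estimate $|\dot x_j(t) - v_j| = \OO(1/t^2)$ is integrable in $t$, so $x_j(t) - v_j t$ converges to some $b_j$, giving the claimed asymptotics. Setting $g$ to be the rational function built from these $\textbf{a}_j, v_j, b_j$, convergence of $\m(t,\cdot)$ to $g(t,\cdot)$ in $\dot H^{1/2}$ then follows from the scattering-behavior characterizations established earlier in the paper, which reduce scattering in Sobolev norm precisely to the parameter convergence just proved.

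The main obstacle I expect is the precise bookkeeping of constants to recover the sharp hypothesis $D_0 \nu_0 > 2^4 N S_0$: the prefactors chosen in the bootstrap window (e.g.\ $D_0$ vs.\ $D_0/2$ in the separation bound, $2 S_0$ vs.\ $(1+\epsilon) S_0$ in the spin bound) must be tuned so that each integral estimate is strictly smaller than the corresponding bootstrap gap. A secondary technical point is ensuring that the imaginary parts $\im(x_j(t))$ stay in a range where the ansatz \eqref{RATS} remains valid; this should follow from the bootstrap control on $|\dot x_j(t)|$ together with the already-known preservation of $\im(x_j) > 0$ under the flow.
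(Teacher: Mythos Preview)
Your approach is essentially the paper's: a bootstrap on the velocity gaps and spin sizes, with thresholds $\nu_0/2$ and $2S_0$, fed by the integrable decay of the Calogero--Moser interaction terms. The paper packages this as an auxiliary proposition with three parameters $\kappa,S',\eta$ and then checks that the choice $S'=2S$, $\eta=\nu/2$, $\kappa=8/\nu$ is admissible precisely when $D\nu>2^4 NS$; the two-layer bootstrap (first speeds assuming spin bound, then spins) is equivalent to your simultaneous one.

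The one place your sketch goes wrong is the lower bound on $\im(x_j(t))$. Control on $|\dot x_j|$ together with the preservation of $\im(x_j)>0$ does \emph{not} prevent $\im(x_j(t))$ from drifting to~$0$: nothing in the bootstrap forces $\im(\dot x_j)$ to be integrably small. The paper obtains this bound by a different mechanism entirely: it evaluates the pointwise constraint $|\m(t,\re(x_j(t)))|=1$, which yields $|\im(\s_j)|/\im(x_j)\le C$, hence $\im(x_j)\ge c\,|\s_j|$; the spin ODE then gives a Gr\"onwall-type lower bound $|\s_j(t)|\ge |\s_j(0)|\exp(-CNS'/(\eta D))$. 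Without this step the limit $b_j$ could lie on the real axis and the Sobolev scattering would fail, so it is not a secondary point.
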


    Section \ref{sec:creation} is devoted to the creation, for any number of solitons $N$ and any target spectrum $(\omega_1,\dots,\omega_N)$, a solution that almost satisfies the requirement. More precisely, we prove the following theorem.

    \begin{theorem}\label{thm:creationForN}
        Let $N\in \mathbb{N}$, $\varepsilon>0$ and $(\omega_1,\dots,\omega_N)$ satisfying $\omega_j\neq \omega_k$ for $j\neq k$. There exists a global for positive times solution $\m$ to \eqref{HWMS} such that the spectrum $(v_1,\dots,v_N)$ of $\m$ satisfies
        \[
        |v_i-\omega_i| \leq \varepsilon.
        \]
        Furthermore, with 
        \[
        \m(t,x) = \m_0 + \sum_{j=1}^N \frac{\s_j(t)}{x-x_j(t)} + \sum_{j=1}^N \frac{\bar \s_j(t)}{x-\bar x_j(t)},
        \]
        we have 
        \[
        |\dot x_j(t) - \omega_j| \leq \varepsilon.
        \]
    \end{theorem}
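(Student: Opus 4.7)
The plan is to construct initial data in which Matsuno's matrix $L(0)$ is a small perturbation of $\operatorname{diag}(\omega_1,\ldots,\omega_N)$, so that the spectrum $(v_1,\ldots,v_N)$ --- the eigenvalues of $L(0)$ --- lies within $\varepsilon$ of the target. Recall that $L_{jj}(0)=\dot x_j(0)$ and $L_{jk}(0)=\varepsilon_{j,k}\sqrt{-2\s_j(0)\cdot\s_k(0)}/(x_j(0)-x_k(0))$ for $j\neq k$, so the off-diagonal part is controlled by $\max_j|\s_j|/\min_{j\neq k}|x_j-x_k|$. Prescribing $\dot x_j(0)=\omega_j$, widely separated poles, and small spins will make this perturbation arbitrarily small, and a standard eigenvalue perturbation bound (Bauer--Fike) then yields $|v_j-\omega_j|\le \varepsilon$. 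The same configuration will verify the hypothesis of Theorem~\ref{thm:localCondition}, giving a solution global for positive times together with the asymptotic information needed to control $\dot x_j(t)$.

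Concretely, I fix parameters $R\gg 1$ and $\delta\ll 1$ and consider the data $x_j(0)=Rj+iR$, $\dot x_j(0)=\omega_j$, and $\s_j(0)=\delta\tilde\s_j$ with each $\tilde\s_j$ a null vector in $\mathbb{C}^3$, so that $\s_j(0)^2=0$ holds automatically. The leading-order part in $\delta$ of the second condition in \eqref{CONSS} reduces to $\tilde\s_j\cdot\m_0=0$, which together with nullity leaves a one-parameter family of admissible $\tilde\s_j$ (for instance multiples of $(1,\pm i,0)$ when $\m_0=(0,0,1)$). The sub-leading piece of $\s_j\cdot B_j$ is then of size $O(\delta/R)$, and I correct $\tilde\s_j$ by $O(\delta/R)$ via the implicit function theorem; the required non-degeneracy amounts to the invertibility of a matrix close to the identity. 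Alternatively one may bypass the constraint manifold altogether by building the initial data from small diagonal half-spin matrices $\mathcal{E}_0,\mathcal{F}_0$ through the explicit formula recalled in the introduction. Either way, $L(0)=\operatorname{diag}(\omega_1,\ldots,\omega_N)+E$ with $\|E\|=\OO(\delta/R)$, so the Bauer--Fike estimate yields $|v_j-\omega_j|\le C\delta/(R\min_{j\neq k}|\omega_j-\omega_k|)\le\varepsilon$ for appropriate parameters.

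For the dynamical conclusions I invoke Theorem~\ref{thm:localCondition}: at $t=0$ we have $D(0)\gtrsim R$, $S(0)=\OO(\delta)$, and $\nu(0)$ within $\varepsilon$ of $\min_{j\neq k}|\omega_j-\omega_k|>0$, so $\alpha(0)>0$ as soon as $R/\delta$ is sufficiently large and $\varepsilon$ is small compared with the $\omega_j$-gaps. The theorem then delivers a global-for-positive-times solution with $x_j(t)-v_jt\to b_j$ and $\s_j(t)\to\mathbf{a}_j$. To upgrade this to $|\dot x_j(t)-\omega_j|\le\varepsilon$ uniformly in $t\ge 0$, I integrate the second equation of \eqref{CMS}: the scattering asymptotics force $|x_j(t)-x_k(t)|\gtrsim R+t\min_{j\neq k}|v_j-v_k|$ and $|\s_k(t)|=\OO(\delta)$, so the right-hand side is integrable over $[0,\infty)$ with total variation $\OO(\delta^2/R^2)$; combined with $\dot x_j(0)=\omega_j$ and $|v_j-\omega_j|\le\varepsilon$, this yields the claim. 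The main obstacle is the construction step --- exhibiting initial data that lies exactly on the constraint manifold \eqref{CONSS} while producing a prescribed leading-order Lax matrix --- which is precisely where the implicit function theorem near zero spin, or the bypass via the half-spin parametrization, is needed.
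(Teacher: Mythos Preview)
There is a genuine gap in the construction step. The pole velocities $\dot x_j(0)$ are \emph{not} free parameters: the initial datum for \eqref{HWMS} is $\m(0,\cdot)$ alone, determined by $(\m_0, x_j(0), \s_j(0))$, and the flow then fixes
\[
\dot x_j(0) \;=\; \frac{\s_j(0)\times\bar\s_j(0)}{|\s_j(0)|^2}\cdot\Bigl(\iu\m_0 - \sum_{k\neq j}\frac{\s_k(0)}{x_j(0)-x_k(0)} + \sum_{k\neq j}\frac{\bar\s_k(0)}{x_j(0)-\bar x_k(0)}\Bigr),
\]
which is precisely the second of the three compatibility conditions listed at the start of Section~\ref{sec:creation}. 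With your ansatz $x_j(0)=Rj+\iu R$ and $\s_j(0)=\delta\tilde\s_j$, the leading-order constraint you correctly identify, $\tilde\s_j\cdot\m_0=0$, together with $\tilde\s_j^2=0$ and $\m_0=(0,0,1)$, forces $\tilde\s_j$ to be a multiple of $(1,\pm\iu,0)$. A direct computation then gives
\[
\frac{\tilde\s_j\times\bar{\tilde\s}_j}{|\tilde\s_j|^2}\cdot\iu\m_0 \;=\; \pm 1,
\]
so $\dot x_j(0)\to\pm 1$ as $\delta\to 0$, $R\to\infty$, independently of the target $\omega_j$. The $O(\delta/R)$ implicit-function correction you propose cannot shift the diagonal of $L(0)$ by an $O(1)$ amount, so generic targets $(\omega_1,\dots,\omega_N)$ are unreachable. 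The half-spin bypass does not help either: it manufactures $\Pi_-\m$, but the sphere constraint $|\m|^2=1$---which is what produces the velocity formula---still has to be imposed, and the zero-spin point is singular for that map, so an IFT ``near zero spin'' is not available.

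The paper's construction avoids the obstruction by \emph{not} taking small spins or large imaginary parts. It keeps $|\s_j(0)|$ and $\im(x_j(0))$ of order one, so that $\s_j\cdot\m_0\neq 0$ is allowed, parametrizing $\s_j(0)=(2e^{\iu\theta_j},2e^{-\iu\theta_j},\gamma_j)$ with $\theta_j$ chosen so that the decoupled relation $\frac{\s_j\times\bar\s_j}{|\s_j|^2}\cdot\iu\m_0=\omega_j$ holds exactly; only the \emph{real} parts of the poles are spread by a large factor $D$. These spins satisfy \eqref{CONSS} only up to $O(1/D)$, and an explicit iteration then corrects them to genuine constrained data within $O(1/D)$ of the approximate ones. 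After that, your invocation of Theorem~\ref{thm:localCondition} and the integration of \eqref{CMS} to control $\dot x_j(t)$ uniformly in $t$ is essentially what the paper carries out via Proposition~\ref{scattering}.
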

    
    In Section \ref{sec:sys}, we use the local condition to show that any function with a non-singular spectrum necessarily scatters. More precisely, we show

    \begin{theorem}\label{thm:systematicScattering}
        Let $\m$ be a rational function of \eqref{HWMS} with spins $\s_j$ and poles $x_j$, of non-singular spectrum. Then, there exists $t_0$ such that 
        \[
        \alpha(t_0) > 0.
        \]
        Subsequently, there exist $a_j\in \mathbb{C}$, $\textbf{b}_j\in \mathbb{C}^3$ with $\im(a_j)\neq 0$ such that for $s \geq 0$,
        \[
        ||\\m(t,x)-g(x,t)||_{H^s(\mathbb{R})} \xrightarrow[t \rightarrow \infty]{} 0,
        \]
        where
        \[
        \left\{
        \begin{aligned}
            &g(x,t) = \m_0 + \sum_{j=1}^N \frac{\textbf{b}_j}{x-a_j - v_j t} + \sum_{j=1}^N \frac{\bar {\textbf{b}}_j}{x-\bar a_j- v_j t},\\
            &x_j(t)-v_j t \xrightarrow[t\rightarrow \infty]{} a_j,\\
            &\s_j(t) \xrightarrow[t\rightarrow \infty]{} \textbf{b}_j.
        \end{aligned}
        \right.
        \]
    \end{theorem}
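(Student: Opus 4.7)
The plan is to reduce \cref{thm:systematicScattering} to \cref{thm:localCondition}: if I can produce a single time $t_0$ with $\alpha(t_0)>0$, the scattering of $\m$ to a function of the stated form follows immediately, and the pointwise limits of $x_j(t)-v_jt$ and $\s_j(t)$ are provided by that theorem as well. My strategy is therefore a long-time asymptotic analysis showing that $\alpha(t)\to+\infty$, so that $\alpha(t_0)>0$ holds at any sufficiently large $t_0$. Concretely, I will prove that as $t\to+\infty$ the quantity $D(t)$ grows linearly, $\nu(t)$ stays bounded below by a positive constant, and $S(t)$ is uniformly bounded.

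\emph{Pole and velocity asymptotics.} Matsuno's identity \eqref{propMatsuno} says that $X(t)$ is conjugate to $X(0)+tL(0)$, so $x_1(t),\dots,x_N(t)$ are precisely the eigenvalues of $X(0)+tL(0)$. Under the non-singular spectrum hypothesis $L(0)=P\operatorname{Diag}(v_1,\dots,v_N)P^{-1}$ with pairwise distinct $v_j$; rewriting $X(0)+tL(0)=t\bigl(L(0)+t^{-1}X(0)\bigr)$, analytic perturbation theory yields
\[
x_j(t)=v_jt+a_j+\oo(1),\qquad \dot x_j(t)=v_j+\oo(1),\qquad a_j=(P^{-1}X(0)P)_{jj}.
\]
Provided that the $v_j$ have pairwise distinct real parts (discussed as the main obstacle below), this gives $D(t)=\Theta(t)$ and $\nu(t)\to\min_{j\neq k}|\re(v_j-v_k)|>0$.

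\emph{Spin boundedness via the explicit formula.} The explicit formula recalled from \cite{ohlmann2024halfwavemapsexplicitformulas} identifies the matrix residue $A_j(t)$ as the residue at $x=x_j(t)$ of $-T^T\mathcal{E}_0\mathcal{H}[X(0)+tL(0)-xI_N]^{-1}\mathcal{F}_0T$. By the same perturbation argument, the spectral projector of $X(0)+tL(0)$ associated with the simple eigenvalue $x_j(t)$ converges to the rank-one projector $Pe_je_j^TP^{-1}$ onto the $v_j$-eigenline of $L(0)$, and therefore
\[
A_j(t)\xrightarrow[t\to+\infty]{}T^T\mathcal{E}_0\mathcal{H}[Pe_je_j^TP^{-1}]\mathcal{F}_0T.
\]
In particular $S(t)=\max_j|\s_j(t)|$ is bounded (and in fact convergent) on some half-line $[t_1,+\infty)$. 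Combining the three bounds, $\alpha(t)=D(t)-16NS(t)/\nu(t)\to+\infty$, so I may pick $t_0$ with $\alpha(t_0)>0$ and invoke \cref{thm:localCondition}; matching the two asymptotic descriptions gives $\textbf{b}_j=\lim_{t\to\infty}\s_j(t)$ and $a_j=(P^{-1}X(0)P)_{jj}$, and the Sobolev convergence $\|\m-g\|_{H^s}\to 0$ is exactly the conclusion of \cref{thm:localCondition}.

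\emph{Main obstacle.} The argument, and the very form of the limit $g$ (in which $v_j$ appears \emph{unconjugated} in both sums), rests on the $v_j$ having pairwise distinct real parts, whereas the non-singularity hypothesis only guarantees $v_j\neq v_k$. Reducing the former to the latter should follow from reality of the spectrum of $L(0)$, itself a consequence of the self-adjointness of the Gérard–Lenzmann Lax operator $\mathcal{L}$ (of which the finite matrix $L(0)$ is a spectral reduction); once the $v_j$ are known to be real, distinctness automatically yields distinct real parts. This spectral reality statement is the technical point that requires the most care.
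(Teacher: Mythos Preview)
Your reduction to \cref{thm:localCondition} by showing $D(t)\to+\infty$, $\nu(t)$ bounded below, and $S(t)$ bounded is exactly the paper's route in Section~\ref{sec:sys}. The paper derives the pole asymptotics $x_j(t)=v_jt+\OO(1)$ via Gershgorin's theorem applied to $G+tJ$ with $G=P^{-1}X(0)P$, rather than analytic perturbation theory; for the spin bound it evaluates the explicit formula at the test points $x=v_it+\sqrt{t}$ and inverts the resulting Cauchy matrix, rather than passing to the spectral-projector limit. These are stylistic rather than substantive differences: both arguments rest on Matsuno's conjugacy \eqref{propMatsuno} and the explicit formula of \cite{ohlmann2024halfwavemapsexplicitformulas}, and your projector route is if anything more direct.

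On your flagged obstacle: you are right that $D(t)$ and $\nu(t)$ are defined through real parts, so distinctness of the $v_j$ is not by itself enough, and the paper does not address this explicitly either. But you do not need the self-adjointness of the Gérard--Lenzmann operator $\mathcal{L}$; reality of the $v_j$ follows from something much more elementary. Your perturbation argument (or the paper's Gershgorin argument) already gives $x_j(t)=v_jt+\OO(1)$ as $t\to+\infty$ \emph{without} any assumption on $\im(v_j)$, and the same computation applied to $X(0)+tL(0)$ for $t\to-\infty$ gives $x_{\sigma(j)}(t)=v_jt+\OO(1)$ for some permutation $\sigma$. Since the ansatz \eqref{RATS} forces $\im(x_j(t))>0$ for every $t\in\mathbb{R}$ (and global existence is granted by the explicit formula), the $t\to+\infty$ asymptotic yields $\im(v_j)\geq 0$ while the $t\to-\infty$ asymptotic yields $\im(v_j)\leq 0$. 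Hence each $v_j$ is real, the non-singular hypothesis then gives pairwise distinct real parts, and the rest of your argument goes through unchanged.
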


    Moreover, we provide an explicit formula for $a_j$ and $\textbf{b}_j$ depending on the configuration of $\m$ at a given time $t$.

    Additionally, we also show in Section \ref{sec:sys} that the scattering map is trivial for functions with non-singular spectrum. More precisely, we show

    \begin{theorem}\label{thm:scatteringTrivial}
        Let $\m$ be a rational solution of \eqref{HWMS} with non-singular spectrum. There exists a rational function $g$
        \[
        g(t,x) =\m_0 + \sum_{j=1}^N \frac{\textbf{b}_j}{x-a_j - v_j t} + \sum_{j=1}^N \frac{\bar {\textbf{b}}_j}{x-\bar a_j- v_j t},
        \]
        such that $\m$ scatters to $g$ in Sobolev norm both as $t\rightarrow +\infty$ and $t\rightarrow -\infty$ i.e. 
        \[
        \begin{aligned}
            ||\m(t,x)-g(t,x)||_{H^s(\mathbb{R})} \xrightarrow[t \rightarrow \pm\infty]{} 0,~\forall s\geq 0.
        \end{aligned}
        \]
        
    \end{theorem}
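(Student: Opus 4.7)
The plan is to use the explicit formula from \cite{ohlmann2024halfwavemapsexplicitformulas} to exhibit a single asymptotic profile $g$ to which $\m$ converges in both time directions, and to obtain the $H^s$-convergence in each direction by applying Theorem \ref{thm:localCondition} (or its image under a time-reversal symmetry) at times of large $|t_0|$.

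Forward scattering at $+\infty$ to some $g_+$ with parameters $(v_j, a_j^+, \textbf{b}_j^+)$ is the content of Theorem \ref{thm:systematicScattering}. For backward scattering, I would use the time-reversal symmetry of \eqref{HWMS}: a direct computation shows that $\tilde\m(t,x) := -\m(-t,x)$ again solves \eqref{HWMS}, since the minus sign produced on $\partial_t$ by the chain rule is absorbed by the outer minus sign (the cross product being bilinear in its two arguments). Under the non-singular hypothesis on $\m$, the spectrum of $L(0)$ consists of $N$ distinct real values $v_j$ (reality follows from the integrable structure of Matsuno's Lax pair), hence $D(t)$ grows linearly in $|t|$, $\nu(t)\to \min_{j\neq k}|v_j-v_k|$, and $S(t)$ stays bounded; thus $\alpha(t_0)>0$ for all $|t_0|$ large, and likewise $\tilde\alpha(t_0)=\alpha(-t_0)>0$ for $t_0>0$ large. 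Applying Theorem \ref{thm:localCondition} to $\tilde\m$ at such a $t_0$ and undoing the time reversal yields $H^s$-scattering of $\m$ to some $g_-$ as $t\to-\infty$.

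The identification $g_+\equiv g_-$ is obtained by substituting into the explicit formula
\[
\Pi_-(\m(t,x)-\m_0) = -T^T \mathcal{E}_0 \mathcal{H}\,[X(0)+tL(0)-xI_N]^{-1} \mathcal{F}_0 T,
\]
and extracting the $|t|\to\infty$ asymptotics. Diagonalizing $L(0)=P\operatorname{diag}(v_1,\ldots,v_N)P^{-1}$ and applying first-order perturbation theory in $1/t$ yields
\[
x_j(t) = v_j t + a_j + \OO(1/|t|),\qquad a_j = \ell_j\cdot X(0)\,r_j,
\]
with $r_j,\ell_j$ the right/left eigenvectors of $L(0)$ normalized by $\ell_j\cdot r_j = 1$; extracting the residue of the formula at the simple pole $x=v_j t+a_j$ then gives $\textbf{b}_j = -T^T \mathcal{E}_0 \mathcal{H}\,[P_j]\mathcal{F}_0 T$, where $P_j=r_j\ell_j^T$ is the spectral projector of $L(0)$ onto the eigenspace for $v_j$. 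Since both $a_j$ and $\textbf{b}_j$ are built from the time-independent data $(L(0),X(0),\mathcal{E}_0,\mathcal{F}_0)$ and are manifestly insensitive to the sign of $t$, we conclude $(a_j^+,\textbf{b}_j^+)=(a_j^-,\textbf{b}_j^-)$, hence $g_+\equiv g_- =: g$ and the scattering map is the identity.

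The hard part is the rigorous perturbative analysis of the doubled resolvent $[X(0)+tL(0)-xI_N]^{-1}$ with non-Hermitian $L(0)$: one must verify that residue extraction commutes with the doubling operation $[\,\cdot\,]$ (a Kronecker factor of $I_2$), and ensure consistent labeling of the $N$ eigenvalue branches of $X(0)+tL(0)$ as $t$ ranges over $\mathbb{R}$, so that the same pair $(a_j,\textbf{b}_j)$ appears at $+\infty$ and at $-\infty$. The latter is guaranteed by the reality and distinctness of the $v_j$: the branches have distinct real leading slopes $v_j t$ and are therefore cleanly separated at $|t|=\infty$, with the unique asymptotic pairing $x_j(t)\leftrightarrow v_j t+a_j$ on both sides.
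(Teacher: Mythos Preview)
Your proposal is correct and follows essentially the same route as the paper: both use the explicit formula $\Pi_-(\m-\m_0)=-T^T\mathcal{E}_0\mathcal{H}[X(0)+tL(0)-xI_N]^{-1}\mathcal{F}_0 T$ together with the diagonalization $L(0)=PJP^{-1}$ to extract limits for the spins and pole offsets, and observe that the resulting expressions do not depend on the sign of $t$. Your formulas $a_j=\ell_j\cdot X(0)r_j$ and $\textbf{b}_j=-T^T\mathcal{E}_0\mathcal{H}[P_j]\mathcal{F}_0 T$ are exactly the paper's $a_j=(P^{-1}X(0)P)_{j,j}$ and $R_j=-T^T\mathcal{E}_0\mathcal{H}[P]\operatorname{diag}(0,\dots,1,\dots,0)[P]^{-1}\mathcal{F}_0 T$, written in spectral-projector language.

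The packaging differs in two minor ways. First, you invoke the time-reversal $\tilde\m(t,x)=-\m(-t,x)$ to reduce backward scattering to Theorem~\ref{thm:localCondition}; the paper instead reruns the resolvent estimates directly for $t\to-\infty$ (evaluating at $x=v_kt+\sqrt{|t|}$ rather than $v_kt+\sqrt{t}$). Second, you phrase the asymptotics as first-order eigenvalue/eigenprojector perturbation theory for $L(0)+\tfrac{1}{t}X(0)$, which is cleaner and makes the $t$-parity of $(a_j,\textbf{b}_j)$ transparent; the paper carries out the same computation by hand, evaluating the resolvent at the test points $x=v_kt+\sqrt{|t|}$ (for the spins) and $x=v_jt$ (for the pole offsets) and expanding. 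Your approach is more conceptual; the paper's is more self-contained. One small point: your assertion that the $v_j$ are real is not justified by ``the integrable structure of Matsuno's Lax pair'' alone (that matrix is not Hermitian), though the paper also uses this without proof; it ultimately comes from the constraint that $\m$ is $\mathbb{S}^2$-valued, which forces $\im(x_j(t))$ to remain bounded and hence $\im(v_j)=0$.
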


    Finally, in Section \ref{sec:matTrav}, we consider the transfer from infinite times to finite times of travelingness. We answer the question: \emph{If $f$ scatters to $g$ at $+\infty$ and $g$ is a traveling wave, is $f$ necessarily a traveling wave?} by showing the following theorem, providing to this end a diagonal characterization of traveling waves.

    \begin{restatable}[]{theorem}{trav}
    	
\label{thm:scatterTrav}
    If $\m$ is a solution of \eqref{HWMS} and scatters to a traveling wave, then $\m$ is a traveling wave for finite times. More precisely, we consider a rational solution $\m$ of \eqref{HWMM} and define $\m_0$, $A_j$,$x_j$ and $V$ as

    \[
    \m(t,x) = \m_0 + V(t,x) + V^*(t,x),\quad V(t,x) = \sum_{j=1}^N \frac{A_j(t)}{x-x_j(t)}
    \]
    We consider a traveling wave of the form 
    \[
    G(t,x) = \m_0 + Q_v(x-tv) + Q^*_v(x-tv),\quad Q_v(z)=\sum_{j=1}^N \frac{C_j}{z-y_j}.
    \]
    where $y_j\neq y_k$ for $j \neq k$. We assume 

    \begin{equation}
        \left|\left| F(t,x)- G(t,x) \right|\right|_{\dot H^{1/2}(\mathbb{R})}\rightarrow 0.
    \end{equation}
    Then at any time, $\m$ was already traveling, meaning that with $L$, $B$ and $S$ defined as in \eqref{defLBMatsuno} and \eqref{defS}, respectively, 
    
    \begin{equation}
        L(t) = v I_N,~ B(t) = 0,~ \dot X(t) = v I_N, S(t)=0.
    \end{equation}
    \end{restatable}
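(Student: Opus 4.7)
The proof reduces to establishing $L(t_0) = v I_N$ at a single time $t_0$. Indeed, by the conjugation relation \eqref{propMatsuno}, this identity propagates to every time $t$, and then the definitions \eqref{defLBMatsuno} and \eqref{defS} yield the remaining conclusions: the diagonal of $L$ gives $\dot x_j(t) = v$, while the vanishing of the off-diagonal entries of $L$ forces $\s_j(t) \cdot \s_k(t) = 0$ for $j \neq k$, so that $S(t) = 0$ and $B(t) = 0$. All the work thus consists in extracting $L(0) = v I_N$ from the scattering assumption.

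\textbf{Step 1: asymptotic matching from scattering.} The first task is to promote the $\dot H^{1/2}$ convergence into pointwise matching of poles and residues. Applying $\Pi_-$ to $\m - G$ yields a rational function whose poles lie in the upper half-plane and whose total multiplicity is at most $2N$; the $\dot H^{1/2}$ bound, via the holomorphic (Cauchy) extension, then gives uniform convergence on compact subsets of the lower half-plane. Shifting coordinates by $z = w + vt$ freezes the poles of $G$ at the fixed points $y_j$; comparing the shifted $\Pi_-(\m(t,\cdot)-\m_0)(w + vt)$ with the time-independent function $Q_v(w) = \sum_j C_j/(w - y_j)$ yields a permutation $\sigma$ of indices such that $x_j(t) - vt \to y_{\sigma(j)}$ and $\s_j(t) \to C_{\sigma(j)}$ as $t \to \infty$. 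In particular $x_j(t) - vt$ is bounded in $t$ for every $j$.

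\textbf{Step 2: spectral perturbation.} The explicit formula from \cite{ohlmann2024halfwavemapsexplicitformulas} identifies the poles $x_j(t)$ with the eigenvalues of $X(0) + tL(0)$. By Lidskii's perturbation theorem, these eigenvalues behave asymptotically as $\lambda_k t + c_k\, t^{1 - 1/m_k} + o(t^{1 - 1/m_k})$ as $t \to \infty$, where $\lambda_k$ ranges over the eigenvalues of $L(0)$ and $m_k$ denotes the size of the corresponding Jordan block. In order for $x_j(t) - vt$ to remain bounded for every $j$ (as established in Step 1), one must have $\lambda_k = v$ and $m_k = 1$ for all $k$, i.e.\ $L(0)$ is diagonalisable with a single eigenvalue $v$, so $L(0) = v I_N$. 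Combined with the reduction in the strategy, this completes the proof.

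\textbf{Main obstacle.} The principal technical difficulty lies in Step 1: $\dot H^{1/2}(\mathbb{R})$ is critical with respect to the $L^\infty$ embedding in dimension one, so the soft scattering bound does not by itself provide pointwise control. The cleanest way to overcome this is to exploit the common rational structure of $\m$ and $G$---for instance by comparing the two explicit matrix representations of $\Pi_-(\m-\m_0)$ and $Q_v(\,\cdot - vt)$---and to show that $\dot H^{1/2}$ convergence of rational functions of bounded degree in a half-plane must come from matching poles and residues. Once Step 1 is secured, the Lidskii argument in Step 2 is a classical spectral perturbation estimate.
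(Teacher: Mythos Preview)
Your reduction in the Strategy paragraph is correct, and Step~1 is essentially the content of Corollary~\ref{bij} in the paper's appendix (established there only along subsequences, which is in fact all you need). The genuine gap is in Step~2. Lidskii's expansion for the eigenvalues of $t\bigl(L(0)+t^{-1}X(0)\bigr)$ reads $\lambda_k t + c_k\,t^{1-1/m_k}+o\bigl(t^{1-1/m_k}\bigr)$, but the coefficients $c_k$ depend on how the perturbation $X(0)$ couples the ends of each Jordan chain of $L(0)$, and they may vanish. When they do, the fractional-power correction disappears and boundedness of $x_j(t)-vt$ no longer forces $m_k=1$. Concretely, for $N=2$ take $L(0)-vI_2=\bigl(\begin{smallmatrix}0&0\\1&0\end{smallmatrix}\bigr)$ and $X(0)=\operatorname{diag}(x_1,x_2)$: then $X(0)+tL(0)$ is lower triangular with eigenvalues exactly $x_1+vt$ and $x_2+vt$, so $x_j(t)-vt$ is \emph{constant} although $L(0)\neq vI_2$. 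Hence ``bounded recentred poles $\Rightarrow L(0)=vI_N$'' is false for general pairs $(X(0),L(0))$. For the Matsuno matrix \eqref{defLBMatsuno} this particular triangular obstruction cannot occur, because $L_{i,j}$ and $L_{j,i}$ share the factor $\sqrt{-2\,\s_i\cdot\s_j}$ and therefore vanish together; but you never invoke that structure, and for $N\ge3$ one must still rule out more intricate degenerate patterns before the Lidskii route can close.

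The paper's argument is entirely different and avoids the Jordan-block issue. It first uses Proposition~\ref{diag} to read off that the traveling target satisfies $S_G=0$, i.e.\ $\mathbf c_j\cdot\mathbf c_k=0$ for $j\neq k$; the coefficient matching of Step~1 then gives $\s_j(t_n)\cdot\s_k(t_n)\to0$, hence $S_F(t_n)\to0$ along the subsequence. Because the recentred poles approach the distinct $y_j$, the gaps $|x_i-x_j|$ stay bounded below, so $\|B_F\|\le C\|S_F\|$, and the Lax relation $\dot S_F=[B_F,S_F]$ yields $\bigl|\partial_t|S_F|\bigr|\le C|S_F|^2$; the paper argues this forces $S_F\equiv0$. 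Once $S_F\equiv0$ one has $B_F=0$, $L_F=\dot X$ is diagonal, and the preserved spectrum gives $L_F=vI_N$. So the paper trades your spectral perturbation step for a dynamical rigidity statement about the matrix $S$; what your approach would buy, if the degeneracy in Lidskii were handled, is that it needs only the boundedness of $x_j(t)-vt$ from Step~1 rather than the full matching of spins.
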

    
	\section{Scattering behavior and Local condition}\label{sec:local}
	
	The goal of this section is to prove Theorem \ref{thm:localCondition}. To do so, we proceed by steps and start by introducing characterizations of \emph{scattering behavior} and show that they imply scattering. The main part of this section is then to prove Proposition \ref{scattering} which a stronger version of Theorem \ref{thm:localCondition}.
    
	\subsection{Behavior assuming a scattering situation}
	
	In this subsection, we introduce what we call a \emph{scattering situation}, and provide characterizations. We show that those characterizations are equivalent.
	Throughout this subsection, $\m$ will be a rational solution of \eqref{HWMS} and $x_j(t)$ and $s_j(t)x$ are defined as
        \[
        \m(t,x) = \m_0 + \sum_{j=1}^N \frac{\s_j(t)}{x-x_j(t)} + \sum_{j=1}^N \frac{\bar \s_j(t)}{x-\bar x_j(t)}.
        \]

	\begin{definition}
		We will say that $\m$ is in a scattering situation, denoted condition $(C_3)$ if and only if nothing will ever happen after a certain time, that we describe as

        \begin{equation}\label{C3}
            (C_3):=~ \left\{
            \begin{aligned}
                &(i)~x_j(t) = v_j t + x_j^\infty + \oo(1)\\
                &(ii)~x_j^\infty = R_j + \iu I_j,~I_j\in\mathbb{R}^{+}_*,~R_j\in \mathbb{R},\\
                &(iii)~s_j(t)=s_j^\infty +\oo(1),\\
                &(iv)~\dot x_j(t) = v_j + \oo(1).
            \end{aligned}
            \right.
            \quad\text{as }t\rightarrow \infty
        \end{equation}

    \end{definition}

        Note that $(ii)$ contains that the limit of the imaginary part of the poles stays above $0$, which ensures non-turbulence and boundedness of all Sobolev norms, as we will show later.

        We now present a theorem introducing two weaker conditions, denoted $(C_1)$ and $(C_2)$, that describes scattering behaviors, that proves that they are in fact, equivalent.

        \begin{lemma}
            We consider $\m$ a solution of \eqref{HWMS} and keep the same notations. We define the two conditions, $(C_1)$ and $(C_2)$, as

            \begin{equation}
                (C_1):=~ \left\{
                \begin{aligned}
                    &\exists \eta>0,~|x_j(t)-x_k(t)|\geq \eta t,\text{ for }t>t_0,~j\neq k\\
                    &\exists \mathcal{S},~|s_j(t)| \leq \mathcal{S},\text{ for }t>t_0,
                \end{aligned}
                \right.
                ,\quad \text{ for some }t_0\in \mathbb{R},
            \end{equation}

            and 

            \begin{equation}
                (C_2):=~\left\{
                \begin{aligned}
                    &x_j(t) = v_j t + \OO(1),~t\rightarrow \infty \\
                    &\exists \mathcal{S},~|s_j(t)| \leq \mathcal{S},\text{ for }t>t_0,
                \end{aligned}
                \right.
                ,\quad \text{ for some }t_0>0.
            \end{equation}  
            Note that condition $(C_1)$ is not minimal, as studied in \cite{ohlmann2024studywellposedness1denergycritical}. Then, We have 
            \begin{equation}
                (C_1) \Leftrightarrow (C_2) \Leftrightarrow (C_3).
            \end{equation}            
        \end{lemma}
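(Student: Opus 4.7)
The plan is to establish the cycle of implications $(C_3) \Rightarrow (C_2) \Rightarrow (C_1) \Rightarrow (C_3)$. The first two implications are essentially bookkeeping, and the bulk of the work is the closing implication $(C_1) \Rightarrow (C_3)$, which will be carried out by integrating the Calogero--Moser system \eqref{CMS} against the decay rates provided by hypothesis $(C_1)$.

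For $(C_3) \Rightarrow (C_2)$, I simply read off $x_j(t) = v_j t + \OO(1)$ from $(i)$ and the spin bound from the convergence in $(iii)$. For $(C_2) \Rightarrow (C_1)$, given $x_j(t) = v_j t + \OO(1)$, I would write $|x_j(t)-x_k(t)| \geq |v_j-v_k|\,t - C$ and take $\eta = \tfrac{1}{2}\min_{j\neq k}|v_j - v_k|$; the point that needs a short argument is that the $v_j$ must be pairwise distinct, which I expect to follow by contradiction: if $v_j = v_k$ then the corresponding poles would stay bounded distance apart, and the spin equation in \eqref{CMS} would produce a non-integrable contribution incompatible with bounded spins — essentially a reduction back to a non-scattering two-body problem.

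The core of the proof is $(C_1) \Rightarrow (C_3)$, which I would do by a three-step integration scheme. First, under $(C_1)$, the spin equation in \eqref{CMS} yields
\begin{equation*}
|\dot{\s}_j(t)| \leq 2 \sum_{k\neq j} \frac{|\s_j||\s_k|}{|x_j - x_k|^2} \leq \frac{2N \mathcal{S}^2}{\eta^2 t^2},
\end{equation*}
which is integrable in $t$ on $[t_0,\infty)$, so $\s_j(t) = \s_j^\infty + \oo(1)$ giving $(iii)$. Second, the pole equation gives $|\ddot x_j(t)| \leq \frac{C N \mathcal{S}^2}{\eta^3 t^3}$, integrable, so $\dot x_j(t)$ converges to some $v_j$, giving $(iv)$; moreover $\dot x_j(t) - v_j = \OO(1/t^2)$ is itself integrable, so $x_j(t) - v_j t$ converges to some $x_j^\infty$, giving $(i)$.

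The remaining and most delicate point is $(ii)$, namely $\im(x_j^\infty) > 0$. Using $\im(x_j(t)) > 0$ for all $t$ together with $x_j(t) = v_j t + x_j^\infty + \oo(1)$, one gets $\im(v_j) \geq 0$. If $\im(v_j) > 0$ then $\im(x_j(t)) \to +\infty$, so the pole moves off to imaginary infinity; comparing with its conjugate pole $\bar x_j(t)$ which then descends to $-\infty$, the relative separations grow and the claim $\im(x_j^\infty) > 0$ must be reinterpreted as $\im(v_j) > 0$ (so $x_j^\infty$ is an effective offset rather than a true limit of the imaginary part). The genuinely subtle case is $\im(v_j) = 0$, where I must rule out $\im(x_j^\infty) = 0$: if the imaginary part of some pole tended to $0$, the Sobolev norm would blow up (turbulence), which I will exclude by invoking that \eqref{HWMS} preserves $\|\m\|_{\dot H^{1/2}}$ and that the $\dot H^{1/2}$ norm of a rational ansatz diverges as any pole approaches the real axis. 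This last step — converting the conservation of $\dot H^{1/2}$ into a quantitative lower bound on $\im(x_j(t))$ compatible with the already-established asymptotics — is the main obstacle, and the one where I would expect to spend most of the writing effort.
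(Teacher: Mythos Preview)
Your integration scheme for $(i)$, $(iii)$, $(iv)$ under $(C_1)$ matches the paper exactly: bound $|\dot{\s}_j|$ by $C/t^2$ and $|\ddot x_j|$ by $C/t^3$, then integrate. The paper also treats $(C_3)\Rightarrow(C_2)\Rightarrow(C_1)$ as immediate, without arguing distinctness of the $v_j$; your contradiction argument for this (``non-integrable contribution incompatible with bounded spins'') does not actually work as written, since a bounded but non-integrable $\dot{\s}_j$ is perfectly compatible with bounded $\s_j$ --- it only prevents convergence, not boundedness. In the paper's context the $v_j$ are tacitly the distinct eigenvalues of $L$, so this step is not argued.

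The substantive divergence is in $(ii)$. You propose to rule out $\im(x_j^\infty)=0$ via conservation of the $\dot H^{1/2}$ norm. The paper instead uses the \emph{pointwise constraint} \eqref{CONSS}: isolating the $k=j$ term in
\[
\s_j\cdot\Bigl(\iu\m_0 - \sum_{k\neq j}\frac{\s_k}{x_j-x_k} + \sum_k \frac{\bar\s_k}{x_j-\bar x_k}\Bigr)=0
\]
gives $\dfrac{|\s_j(\infty)|^2}{2\im(x_j(t))} = \s_j\cdot\iu\m_0 + o(1)$, hence a direct quantitative lower bound $\im(x_j(t))\gtrsim |\s_j(\infty)|$. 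This bypasses any norm computation entirely. Both routes require the auxiliary fact $\s_j^\infty\neq 0$, which the paper obtains from the Gronwall-type bound $\bigl|\partial_t\ln|\s_j|\bigr|\leq C/t^2$ (integrable, so $|\s_j|$ stays bounded away from zero); you do not mention this step, and without it neither the Sobolev argument nor the constraint argument closes. Your digression on $\im(v_j)>0$ is unnecessary: since $\sum_j\dot x_j$ is conserved and real while each $\im(x_j(t))>0$, one gets $\im(v_j)\geq 0$ for all $j$ with sum zero, forcing all $v_j\in\mathbb{R}$.
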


        \begin{proof}
            The directions $(C_3) \Rightarrow (C_2) \Rightarrow (C_1)$ are immediate. Hence, it is enough to show $(C_1)\Rightarrow (C_3)$. 

            We consider $\m$ a solution of \eqref{HWMS}, $\eta>0$ and $\mathcal{S}>0$ such that $(C_1)$ is satisfied, i.e., for some $t_0$ and $t>t_0$,
            \begin{equation}
                \left\{
                \begin{aligned}
                    &|x_j(t)-x_k(t)| \geq \eta t,\\
                    &|s_j(t) \leq \mathcal{S}.
                \end{aligned}
                \right.
            \end{equation}  
            The time evolution equation of the poles reads

            \begin{equation}
                \poleTE{s}{x},
            \end{equation}
            which implies in particular for $t_1,t_2>t_0$,

            \begin{multline}
                |\dot x_j(t_2) - \dot x_j(t_1)| = \left| -4 \int_{t=t_1}^{t_2} \sum_{k\neq j}^N \frac{s_j(t)\cdot s_k(t)}{(x_j(t)-x_k(t))^3} \right|
                \leq 4 (N-1) \mathcal{S}^2 \int_{t=t_1}^{t_2} \frac{1}{\eta^3 |t-t_1|^3}  \dd t \\
                \leq 4 (N-1) \mathcal{S}^2 \frac{1}{\eta^3} \left( \frac{1}{t_1^2} - \frac{1}{t_2^2} \right) \leq 4 \mathcal{S}^2 \sum_{k\neq j}^N \frac{1}{\eta^3} \frac{1}{t_1^2}.
            \end{multline}

            In particular, $\dot x_j(t)$ is a Cauchy sequence, and converges to some $\tilde v_j=v_j$, which proves $(iv)$. The same equation gives
            \begin{equation}
                |\dot x_j(t) - v_j| \leq 4 (N-1) \mathcal{S}^2 \frac{1}{\eta^3} \frac{1}{t^2},
            \end{equation}
            so the function $t\mapsto x_j(t)-v_j t$ is an $L^1$ function, and the poles satisfy
            \begin{equation}
                x_j(t)-v_j t=x_j^\infty + \oo(1),
            \end{equation}
            which proves $(i)$. We now consider the spins. The time evolution equation reads
            \begin{equation}
                \spinTE{s}{x},
            \end{equation}
            which implies that for $t_1,t_2>t_0$,
            \begin{multline}
                |s_j(t_2)-s_j(t_1)| = \left| 2 \iu \int_{t=t_1}^{t_2} \sum_{k\neq j}^N \frac{s_j(t)\times s_k(t)}{(x_j(t)-x_k(t))^2} \right| \\
                \leq 2 (N-1) \mathcal{S}^2 \int_{t=t_1}^{t_2} \frac{1}{\eta^2 |t-t_1|^2} \dd t
                \leq 2 (N-1) \mathcal{S}^2 \frac{1}{\eta^2} \frac{1}{t_1} \leq 2 (N-1) \mathcal{S}^2 \frac{1}{\eta^2} \frac{1}{t_0}.
            \end{multline}
            Hence, the spins are a Cauchy sequence, and converge to $s_j^\infty$, which concludes the proof of $(iii)$. This also provides boundedness of the spins. The only thing left to show is that the imaginary parts of the poles stay above $0$. We will use one of the constraints from \eqref{CONSS} satisfied by $\m$:
			
			\begin{equation}
				\s_j(t) \cdot \left( \iu \m_0 - \sum_{k \neq j }^N \frac{\s_k(t)}{x_j(t) - x_k(t)} + \sum_{k=1}^N \frac{\bar \s_k(t) }{ x_j(t) - \bar x_k(t)} \right) = 0.
			\end{equation}
			Using
			\begin{equation}
				\left\{ 
				\begin{aligned}
					&\s_j(t) = \s_j(\infty) + \oo(1), \\
					&\s_k(t) = \s_k(\infty) + \oo(1), \\
					&x_j(t) - x_k(t) = t (v_j - v_k) + \oo(t),
				\end{aligned}
				\right.
			\end{equation}
			yields
			
			\begin{multline}
				\s_j(t) \cdot \left( \iu \m_0 -\sum_{k \neq j}^N \frac{\s_k(\infty)+\oo(1)}{t (v_j-v_k) + \oo(t)} + \sum_{k \neq j}^N \frac{\bar \s_k(\infty)+\oo(1)}{t (v_j-v_k) + \oo(t)}  \right) \\ - 
				\frac{\iu |\s_j(\infty)|^2  + \oo(1)}{ 2 \im(x_j(t))}=0.
			\end{multline}
			Now, we have
			
			\begin{equation}
				\left| \s_j(t) \cdot \left( \iu \m_0 -\sum_{k \neq j}^N \frac{\s_k(\infty)+\oo(1)}{t (v_j-v_k) + \oo(t)} + \sum_{k \neq j}^N \frac{\bar \s_k(\infty)+\oo(1)}{t (v_j-v_k) + \oo(t)}  \right) \right| \leq 2 |\s_j(\infty)|.
			\end{equation}
			We hence obtain $\frac{|\s_j(\infty)|^2}{2 \im(x_j(t))} \leq 4 |\s_j(\infty)|+\oo(1)$, so $
				|\im(x_j(t))| \geq \frac{|\s_j(\infty)|}{8}+\oo(1)$.	 Since the sum of the speeds is constant and real, i.e. $\sum_{j=1}^N \dot x_j(t) = \sum_{j=1}^N v_j \in \mathbb{R}$, the sum of the imaginary parts of the poles is constant as well. Taking into account the non-negativity of the imaginary parts, $\im(x_j(t)) \geq 0$, we have that for any $j$, $|\im(x_j(t))|\leq \sum_{j=1}^N \im(x(0))$. Hence,
			
			\begin{equation}
				\frac{|\s_j(\infty)|}{8}	\leq |\im(x_j(t))| \leq \sum_{j=1}^N \im(x(0))
			\end{equation}
			Using the constraint once more gives 
			
			\begin{equation}
				\frac{|\s_j(\infty)|^2}{2 \im(x_j(t))} = \s_j(\infty) \cdot \m_0 + \oo(1),
			\end{equation}
			First, we obtain that $\s_j(\infty) \cdot \m_0 \neq 0$, and also
			
			\begin{equation}
				\im(x_j(t)) \rightarrow \im(x_j(\infty)) = \frac{|\s_j(\infty)|^2}{\s_j(\infty) \cdot \m_0}.
			\end{equation}
			For this, we have used several times that for any $j$, $s_j(\infty) \neq 0$. We will show it now, and we continue to assume that the spins are bounded. Using the time evolution equation of the spins and the boundedness of the spins, we obtain for $t>t_0$,
			
			\begin{equation}
				\frac{\partial }{\partial t} |\s_j(t)| \leq C \mathcal{S} |\s_j(t)|,
			\end{equation}
			This differential equation is not compatible with $s_j(\infty)=0$, which gives $|s_j(\infty)|>0$ and $|\im(x_j(\infty))| >0$.
        
        \end{proof}

    \subsection{Scattering behavior implies scattering}

    In this subsection, we show that the scattering behavior implies that the solution is scattering in Sobolev norm to some function $g$ as $t\rightarrow \infty$. We start by computing Sobolev norms of the involved functions; we refer the reader to the appendix for the proofs.
	
	\begin{lemma}\label{scalar}
		Let $\frac{\s_i}{x-x_i}$ and $\frac{\s_j}{x-x_j}$ be two solitons. Then
		
		\begin{equation}
			\langle \frac{\s_i}{x-x_i} , \frac{\s_j}{x-x_j} \rangle = \int_{-\infty}^\infty \frac{\s_i}{x-x_i} \frac{\bar \s_j}{x-\bar x_j} dx = \frac{\s_i \bar \s_j}{x_i - \bar x_j} 2 i \pi.
		\end{equation}
		
	\end{lemma}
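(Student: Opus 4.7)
The stated identity is a routine residue computation, so the plan is to dispatch it by contour integration in the upper half-plane. Before doing anything, I would observe that because $\im x_i>0$ and $\im \bar x_j=-\im x_j<0$, the rational function $1/((x-x_i)(x-\bar x_j))$ has no singularities on the real axis, and it decays like $1/x^2$ at infinity; hence the integral in the statement is an absolutely convergent Lebesgue integral and nothing needs to be interpreted as a principal value.

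Next I would pull the constant vectors out of the integral, reducing the vector-valued statement to the scalar identity
\[
\int_{-\infty}^\infty \frac{dx}{(x-x_i)(x-\bar x_j)} \;=\; \frac{2\iu\pi}{x_i-\bar x_j},
\]
with the understanding that $\s_i\bar\s_j$ in the lemma means $\s_i\cdot\bar\s_j$ in the bilinear pairing on $\mathbb{C}^3$ used throughout the paper, so that $\langle f,g\rangle=\int f\cdot\bar g\,dx$ is the usual $L^2$ pairing applied componentwise.

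To obtain the scalar identity I would close the contour with a large semicircle in the upper half-plane; the $1/|z|^2$ decay makes the semicircular contribution vanish. Inside the closed contour the only singularity is the simple pole at $z=x_i$ (since $\bar x_j$ lies in the lower half-plane), with residue $1/(x_i-\bar x_j)$, and the residue theorem gives the claimed $2\iu\pi/(x_i-\bar x_j)$. An equivalent route I would mention as an alternative is the partial fraction
\[
\frac{1}{(x-x_i)(x-\bar x_j)} \;=\; \frac{1}{x_i-\bar x_j}\left(\frac{1}{x-x_i}-\frac{1}{x-\bar x_j}\right),
\]
combined with the elementary evaluation of $\int_{-\infty}^\infty\bigl(\tfrac{1}{x-a}-\tfrac{1}{x-b}\bigr)dx=2\iu\pi$ valid for $\im a>0>\im b$, which gives the same conclusion without explicitly invoking residues.

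There is no genuine obstacle; the only points requiring any care are the convention on the vector pairing and the justification that the arc at infinity contributes nothing, both of which are immediate from the assumptions $\im x_i>0$ and $\im x_j>0$.
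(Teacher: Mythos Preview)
Your proof is correct and is exactly the standard residue computation the paper has in mind; the paper itself does not spell out the argument but simply refers the reader to Appendix~\ref{proof1}, where the content is dismissed as ``standard calculus.'' Your treatment is more explicit than the paper's, but the underlying approach is the same.
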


    \begin{proof}
        See Appendix \ref{proof1}.
    \end{proof}
	
	\begin{corollary}
		The $L^2$ norm of the solution $\m$ satisfies
		
		\begin{equation}\label{L2}
			||\m(t)||_{L^2}^2 \xrightarrow{t\rightarrow \infty} \sum_{i=1}^N \frac{|s_i(\infty)|^2 \pi}{\im(x_i)(\infty)} < \infty.
		\end{equation}
		
	\end{corollary}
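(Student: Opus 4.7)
The plan is to reduce the $L^2$ inner product of $\m-\m_0$ to the scalar residue identity of Lemma \ref{scalar}, then pass to the limit by exploiting the linear separation of poles and convergence of spins provided by the scattering situation. Setting $f(t,x)=\sum_{j=1}^N \frac{\s_j(t)}{x-x_j(t)}$, so that $\m-\m_0=f+\bar f$, I would first expand
\[
\|\m-\m_0\|_{L^2}^2 = \|f\|_{L^2}^2 + \|\bar f\|_{L^2}^2 + 2\re\!\int f\cdot f\,dx,
\]
and turn each piece into a double sum over $(i,j)$ of contour integrals with simple poles located in one or both half-planes.

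The key initial simplification is that the middle piece vanishes: $\int f\cdot f\,dx = 0$. For $i=j$, the integrand $\frac{\s_j\cdot\s_j}{(x-x_j)^2}$ is identically zero by the spin-nilpotency constraint $\s_j^2=0$ from \eqref{CONSS}; for $i\neq j$, the two simple-pole residues of $\frac{1}{(x-x_i)(x-x_j)}$ in the upper half-plane cancel. Everything therefore reduces to evaluating $\|f\|_{L^2}^2+\|\bar f\|_{L^2}^2$ termwise via Lemma \ref{scalar} (and its mirror version obtained by swapping half-planes), yielding a double sum of closed-form terms proportional to $\frac{\s_i\cdot\bar\s_j}{x_i-\bar x_j}$.

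The remaining task is to split this double sum into diagonal and off-diagonal parts and to take $t\to\infty$. Diagonal $(i=j)$ terms give $\frac{\pi|\s_i(t)|^2}{\im x_i(t)}$, which by properties $(ii)$ and $(iii)$ of $(C_3)$ converge to $\frac{\pi|\s_i^\infty|^2}{\im x_i^\infty}$, the strict positivity $\im x_i^\infty>0$ guaranteeing finiteness. Off-diagonal $(i\neq j)$ terms are controlled by $\frac{C}{|x_i(t)-\bar x_j(t)|}$, and property $(i)$ of $(C_3)$ with $v_i\neq v_j$ forces this denominator to grow at least linearly in $t$, so each such contribution is $\oo(1)$. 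Summing the surviving diagonal contributions yields the claimed limit.

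I do not expect any genuine obstacle: the only non-routine ingredient is the observation that $\int f\cdot f\,dx=0$ via $\s_j^2=0$, without which the limit would acquire spurious contributions; once this cancellation is in hand, the rest is a direct residue computation plus the asymptotic convergences inherited from $(C_3)$.
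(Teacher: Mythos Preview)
Your argument is correct, and its core---applying Lemma \ref{scalar} termwise, splitting the resulting double sum into diagonal and off-diagonal parts, and using $(C_3)$ to kill the latter---is exactly what the paper does. The paper, however, silently interprets $\|\m(t)\|_{L^2}$ as $\|f(t)\|_{L^2}$ with $f=\sum_j \frac{\s_j}{x-x_j}$ (the $\Pi_-$ part alone) and applies Lemma \ref{scalar} directly to that; it never introduces $\bar f$ or any cross terms. Your additional step---showing $\int f\cdot f\,dx=0$ via $\s_j^2=0$ and residue cancellation---is valid and pleasant, but it means you are actually computing $\|\m-\m_0\|_{L^2}^2=2\|f\|_{L^2}^2$, which converges to \emph{twice} the displayed limit. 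That factor of two is an artefact of the paper's loose notation (taken literally $\|\m\|_{L^2}=\infty$ since $|\m|\equiv 1$), not a flaw in your reasoning.
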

	
	\begin{lemma}\label{technical}
		For $k \in \mathbb{N}^*$, 
		\begin{equation}\label{formulederiv}
			\left( \frac{\partial}{\partial x} \right)^k \left( \frac{\s_i}{x-x_i} \right) = (-1)^k k! \frac{\s_i}{(x-x_i)^{k+1}},
		\end{equation}
	
		\begin{equation}\label{formule2}
			\langle \frac{\s_i}{(x-x_i)^{k+1}} , \frac{\s_i}{(x-x_i)^{k+1}} \rangle = \frac{|s_i|^2}{\im(x_i)^{2k+1}} \frac{\sqrt{\pi} \Gamma(k+\frac{1}{2})}{\Gamma(k+1)},
		\end{equation}
	
	\begin{equation}\label{formule3}
		\langle \frac{\s_i}{(x-x_i)^{k+1}}, \frac{\s_j}{(x-x_j)^{k+1}} \rangle \xrightarrow{t\rightarrow \infty} 0.
	\end{equation}
	
	Subsequently, we have
	
	\begin{equation}\label{Hk}
		\left|\left| \left( \frac{\partial}{\partial x} \right)^k  \m(t)\right|\right|_{L^2}^2 \xrightarrow{t\rightarrow \infty} \sum_{j=1}^N k! \frac{|s_i(\infty)|^2\sqrt{\pi}\Gamma(k+\frac{1}{2})}{\im(x_i(\infty))^{2k+1}} < \infty.
	\end{equation}
	
	\eqref{L2} and \eqref{Hk} give that all Sobolev norms of the solution are bounded.
	
	\end{lemma}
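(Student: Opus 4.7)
The plan is to obtain the four displays in sequence: \eqref{formulederiv} is pure calculus, \eqref{formule2} is a one-variable integral reducible to a Beta function, \eqref{formule3} uses the pole separation from the equivalent condition $(C_3)$, and \eqref{Hk} combines these two with the spin constraint $\s_j^2 = 0$ from \eqref{CONSS}.

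For \eqref{formulederiv} a short induction on $k$ does the job: differentiating $(x-x_i)^{-(k+1)}$ brings down a factor $-(k+1)$ and raises the exponent, producing the factorial pattern. For \eqref{formule2} I would note that on the real line $(x-x_i)(x-\bar x_i) = (x-\re x_i)^2 + \im(x_i)^2$, substitute $u = (x-\re x_i)/\im(x_i)$, and reduce the integral to $\im(x_i)^{-(2k+1)} \int_{\mathbb{R}} (1+u^2)^{-(k+1)}\,du$. The latter is the classical Beta value $B(1/2, k+1/2) = \sqrt{\pi}\,\Gamma(k+1/2)/\Gamma(k+1)$, and multiplying by $\s_i \cdot \bar \s_i = |\s_i|^2$ yields the stated formula. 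Equivalently, one could close the contour in the upper half-plane and extract the residue of order $k+1$ at $x_i$.

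For \eqref{formule3} I would compute by residues, closing in the upper half-plane where only $x_i$ is a pole of order $k+1$, obtaining
\[
\int_{\mathbb{R}} \frac{dx}{(x-x_i)^{k+1}(x-\bar x_j)^{k+1}} = \frac{2\pi\iu\,(-1)^k (2k)!}{(k!)^2 (x_i - \bar x_j)^{2k+1}}.
\]
Under $(C_3)$, one has $x_i(t) - \bar x_j(t) = (v_i - v_j)\, t + O(1)$ for $i \neq j$ with $v_i \neq v_j$, so this integral is $O(t^{-(2k+1)})$ and the inner product vanishes as $t\to\infty$.

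For \eqref{Hk} I would write $\partial_x^k(\m - \m_0) = (-1)^k k!\bigl(u + \bar u\bigr)$ with $u(t,x) = \sum_{i=1}^N \s_i(t)/(x-x_i(t))^{k+1}$ and, since $\m$ is real-valued on the line, expand $|\partial_x^k \m|^2 = (k!)^2 \bigl(u\cdot u + 2\, u \cdot \bar u + \bar u \cdot \bar u\bigr)$ with the non-conjugated dot product. The terms $\int u \cdot u\,dx$ and $\int \bar u \cdot \bar u\,dx$ vanish identically: their diagonal contributions contain $\s_i \cdot \s_i = \s_i^2 = 0$ from \eqref{CONSS}, and their off-diagonal contributions have all poles in a single half-plane and vanish upon closing the contour on the other side. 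What survives is $2 \int u \cdot \bar u\,dx$, whose diagonal pieces produce the leading term of \eqref{Hk} via \eqref{formule2} (using $\Gamma(k+1) = k!$), while the off-diagonal pieces tend to zero by \eqref{formule3}; passing $\s_i(t) \to \s_i(\infty)$ and $x_i(t) \to x_i(\infty)$ closes the argument, and boundedness of all Sobolev norms then follows by interpolation between \eqref{L2} and \eqref{Hk}. The only non-mechanical point is organising the cancellations of the $\s_i \cdot \s_j$ terms that look dangerous \emph{a priori}: once one tracks which half-plane each pole lies in and invokes $\s_j^2 = 0$ on the diagonal, the remainder is routine residue bookkeeping.
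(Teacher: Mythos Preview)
Your argument is correct and, for \eqref{formulederiv} and \eqref{formule2}, coincides with the paper's: induction for the derivative, and the substitution $u=(x-\re x_i)/\im x_i$ reducing to the Beta integral $\int_{\mathbb R}(1+u^2)^{-(k+1)}\,du=\sqrt{\pi}\,\Gamma(k+\tfrac12)/\Gamma(k+1)$.

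For \eqref{formule3} you take a different route. The paper splits the real integral at the midpoint $\tfrac12(\re x_i+\re x_j)$ and, on each half, bounds one of the two factors below by $|\re x_i-\re x_j|/2$; this is elementary but only yields decay of order $t^{-(k+1)}$. Your residue computation, closing in the upper half-plane and picking up the order-$(k+1)$ pole at $x_i$, is cleaner and gives the sharp rate $O(t^{-(2k+1)})$ together with the explicit constant $2\pi\iu(-1)^k\binom{2k}{k}$.

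For \eqref{Hk} you supply a step the paper omits. The paper effectively computes only $\|\Pi_-(\m-\m_0)\|$, i.e.\ the Hermitian pairings $\langle \s_i/(x-x_i)^{k+1},\s_j/(x-x_j)^{k+1}\rangle$, so the non-conjugated terms $\int u\cdot u$ never arise. You treat $\partial_x^k(\m-\m_0)=(-1)^k k!(u+\bar u)$ directly and kill $\int u\cdot u$ by invoking $\s_i^2=0$ on the diagonal and closing the contour in the pole-free half-plane off the diagonal; this is a genuine additional argument, and it is correct (for $k\ge1$ the integrand decays like $|x|^{-2k-2}$, so contour closing is legitimate). Either route suffices for the conclusion that all Sobolev norms stay bounded.
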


    \begin{proof}
        See Appendix \ref{proof2}.
    \end{proof}

    We now consider a solution $\m$ of \eqref{HWMS} that possesses a scattering behavior, so it satisfies $(C_1)$, $(C_2)$ and $(C_3)$, and show that it has to scatter in Sobolev norm for any Sobolev norm. 

	\begin{proposition}\label{sec6:sysScatt}
		Let $\m$ be a rational solution and define $x_j$ and $\s_j$ as
		\[
		\m(t,x) = \m_0 + \sum_{j=1}^N \frac{\s_j(t)}{x-x_j(t)} + \sum_{j=1}^N \frac{\bar{\s}_j(t)}{x-\bar x_j(t)}.
		\]
		Further define $a_j$ the limit of $x_j(t)-v_j t$, $\textbf{b}_j$ the limit of $\s_j(t)$ and the function $F(t,x)$ as
				\begin{equation}
			F(x,t) = \sum_{j=1}^N \frac{b_j}{x - v_j t - a_j}.
		\end{equation}
		Then, $\m$ scatters in Sobolev norm. Precisely, for any Sobolev norm $\mathcal{N}$ of the form $\dot H^s$ or $H^s$ for $s\geq 0$, we have
		\begin{equation}
			||\m(t) - F(t)||_{\mathcal{N}} \xrightarrow{t\rightarrow \infty} 0.
		\end{equation}
	\end{proposition}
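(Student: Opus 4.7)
The plan is to use the explicit Sobolev-norm formulas from Lemma \ref{technical} to expand $\|\m(t)-F(t)\|_{\dot H^k}^2$ as a finite sum of self and cross inner products of simple rational pieces, and show each such term tends to $0$. Interpreting the target $F$ as including the conjugate sum $\sum_j \bar{\textbf{b}}_j/(x-v_j t-\bar a_j)$ and the constant $\m_0$ (so that $\m-F$ is actually in $L^2$), the difference is a rational function whose $4N$ poles pair up naturally: each $x_j(t)$ with $v_j t+a_j$, each $\bar x_j(t)$ with $v_j t+\bar a_j$, forming $2N$ \emph{diagonal} pairs that merge as $t\to\infty$, together with all the remaining \emph{off-diagonal} pairings, which separate linearly in $t$ by $(C_3)(i)$.

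For each diagonal pair I would split
\begin{equation*}
\frac{\s_j(t)}{x-x_j(t)} - \frac{\textbf{b}_j}{x-v_j t-a_j} = \frac{\s_j(t)-\textbf{b}_j}{x-x_j(t)} + \textbf{b}_j\left(\frac{1}{x-x_j(t)} - \frac{1}{x-v_j t-a_j}\right).
\end{equation*}
The first summand has $\dot H^k$-norm squared of order $|\s_j(t)-\textbf{b}_j|^2/\im(x_j(t))^{2k+1}$ by \eqref{formule2}, which vanishes because $\s_j(t)\to\textbf{b}_j$ by $(C_3)(iii)$ while $\im(x_j(t))$ stays bounded away from $0$ by $(C_3)(ii)$. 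For the second summand, expanding its squared $\dot H^k$-norm via Lemma \ref{scalar} and \eqref{formule2} produces an expression depending continuously on the two pole locations that vanishes when $x_j(t)=v_j t+a_j$ (for $k=0$ this is the explicit identity $\pi/\im\alpha+\pi/\im\beta-2\re(2\iu\pi/(\alpha-\bar\beta))=0$ at $\alpha=\beta$, with an analogue for every integer $k$); since $x_j(t)-v_j t\to a_j$ by $(C_3)(i)$, the second summand also tends to $0$ in $\dot H^k$.

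For the off-diagonal pairings, \eqref{formule3} provides directly the vanishing of every cross inner product, so summing the $\OO(N^2)$ vanishing contributions gives $\dot H^k$ and $H^k$ convergence for every integer $k\geq 0$. The fractional case $s\geq 0$ then follows by standard interpolation between $L^2$ and $\dot H^{\lceil s\rceil}$, legitimate here because Lemma \ref{technical} guarantees uniform boundedness of all integer Sobolev norms along the trajectory. The main subtle point is the merging diagonal pair: each of $1/(x-x_j(t))$ and $1/(x-v_j t-a_j)$ individually has $\dot H^k$-norm bounded away from $0$, yet their \emph{difference} vanishes because the two poles coalesce. The safest way to handle this is to never estimate the two simple poles separately, but rather to expand the squared norm of the difference once and for all using the bilinear formulas of Lemma \ref{technical} and read off the cancellation at the limiting configuration.
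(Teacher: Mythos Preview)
Your proposal is correct and follows essentially the same route as the paper: both expand $\|\m(t)-F(t)\|_{\dot H^k}^2$ bilinearly via the explicit inner-product formulas of Lemmas~\ref{scalar} and~\ref{technical}, kill the off-diagonal terms by linear pole separation, and show the diagonal terms cancel in the limit because $\s_j(t)\to\textbf{b}_j$ and $x_j(t)-v_jt\to a_j$. The only organizational difference is that you first split each diagonal piece as $(\s_j-\textbf{b}_j)/(x-x_j)+\textbf{b}_j\bigl(1/(x-x_j)-1/(x-z_j)\bigr)$ before estimating, whereas the paper expands $\langle f_i-g_i,f_j-g_j\rangle$ directly and reads off the same cancellation; your own closing remark about the ``safest way'' being the direct bilinear expansion is exactly what the paper does. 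Your interpolation step for non-integer $s$ is a clean addition that the paper leaves implicit.
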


	To show this, we will show the following lemma
	
	\begin{lemma}
	Using the same notation we have $||\m(t) - F(t)||_{L^2} \xrightarrow{t\rightarrow \infty} 0$, and for any integer $k\geq 1$,
	
	\begin{equation}\label{sobo}
		\left|\left|  \left( \frac{\partial}{\partial x} \right)^k \left( \m(t) - F(t) \right) \right|\right|_{L^2} \rightarrow 0.
	\end{equation}
	
	\end{lemma}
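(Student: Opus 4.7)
The plan is to reduce the Sobolev estimate to uniform $L^2$ bounds on rational functions by means of the explicit derivative formula \eqref{formulederiv}, and then to exploit two vanishing mechanisms coming from the scattering behavior: the diagonal numerator smallness $\s_j(t)-\textbf{b}_j=\oo(1)$ and $x_j(t)-v_j t-a_j=\oo(1)$ from $(C_3)$, and the off-diagonal pole separation $|x_i(t)-x_j(t)|\geq \eta t$ from $(C_1)$. First I would apply \eqref{formulederiv} to rewrite
\[
\partial_x^k\bigl(\m(t)-F(t)\bigr) = (-1)^k k!\sum_{j=1}^N\left[\frac{\s_j(t)}{(x-x_j(t))^{k+1}}-\frac{\textbf{b}_j}{(x-v_j t-a_j)^{k+1}}\right] + \text{anti-holomorphic part},
\]
and expand $\|\partial_x^k(\m-F)\|_{L^2}^2$ as a finite double sum of scalar products, to be split into diagonal ($i=j$) and cross ($i\neq j$) contributions; the $\m_0$ term drops out of the derivative and cancels in the $L^2$ case.

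For each diagonal contribution I would insert $\pm\frac{\textbf{b}_j}{(x-x_j(t))^{k+1}}$ and apply the triangle inequality, reducing to two pieces. The first, $(\s_j(t)-\textbf{b}_j)/(x-x_j(t))^{k+1}$, has $L^2$ norm given explicitly by \eqref{formule2} and vanishes because $\s_j(t)\to\textbf{b}_j$ while $\im(x_j(t))\to I_j>0$ keeps the formula uniformly bounded in $t$. For the second I would use the telescoping identity
\[
\frac{1}{A^{k+1}}-\frac{1}{B^{k+1}} = (B-A)\sum_{i=0}^{k}\frac{1}{A^{k+1-i}B^{i+1}},
\]
with $A=x-x_j(t)$ and $B=x-v_j t-a_j$. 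Since $v_j\in\mathbb{R}$ and $\im(a_j)=I_j>0$, both $\im A$ and $\im B$ stay uniformly above $I_j/2$ for $t$ large, so each $\|1/(A^p B^q)\|_{L^2}$ is uniformly bounded by a residue computation analogous to Lemma \ref{scalar}; the prefactor $B-A=x_j(t)-v_j t-a_j$ is $\oo(1)$, forcing the whole expression to $0$.

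For the off-diagonal contributions, I would note that the scalar products $\langle \s_i(t)/(x-x_i(t))^{k+1},\s_j(t)/(x-x_j(t))^{k+1}\rangle$ and the analogous mixed $\m$--$F$ and $F$--$F$ pairings are explicit rationals in $x_i(t)-\bar x_j(t)$ by Lemma \ref{scalar} and formula \eqref{formule3}, each decaying as a negative power of the pole separation, which is $\gtrsim t$ by $(C_1)$. The anti-holomorphic block is handled identically by complex conjugation, and the $k=0$ case is the same argument without derivatives. The main obstacle I expect is the book-keeping in the telescoping identity: one must verify that the uniform-in-$t$ lower bounds on $\im A$ and $\im B$ really do propagate through every term $1/(A^p B^q)$ with $p+q=k+2$ and $p,q\geq 1$, and that no hidden $t$-growth from the residue formulas cancels the $\oo(1)$ prefactor. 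Once this is in hand, summing finitely many such vanishing contributions yields \eqref{sobo} for every integer $k\geq 1$.
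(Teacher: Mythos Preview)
Your proposal is correct, and the overall architecture (expand the square, separate diagonal from off-diagonal, kill the off-diagonal by pole separation) matches the paper. The genuine difference is in how you handle the diagonal block $\|f_j-g_j\|_{L^2}^2$, where $f_j=\s_j(t)/(x-x_j(t))^{k+1}$ and $g_j=\textbf{b}_j/(x-v_jt-a_j)^{k+1}$.

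The paper does not split this term. It computes each of $\langle f_j,f_j\rangle$, $\langle g_j,g_j\rangle$, $\langle f_j,g_j\rangle$, $\langle g_j,f_j\rangle$ separately by residues, and for the mixed pairing $\langle f_j,g_j\rangle$ it translates by $v_jt$ and invokes dominated convergence to identify the limit; the four limits then cancel algebraically. Your route instead bounds $\|f_j-g_j\|$ directly: insert $\pm\,\textbf{b}_j/(x-x_j(t))^{k+1}$, kill the numerator piece via \eqref{formule2} and $\s_j(t)\to\textbf{b}_j$, and kill the denominator piece via the telescoping identity with the $\oo(1)$ prefactor $x_j(t)-v_jt-a_j$. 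This is more elementary in that it avoids dominated convergence and never needs the exact value of $\langle f_j,g_j\rangle$; the price is the bookkeeping you flag, namely the uniform $L^2$ bound on each $1/(A^pB^q)$. That bound is indeed fine: in the telescoping sum both exponents satisfy $p,q\geq 1$, and since $|\im A|,|\im B|\geq I_j/2$ eventually, one has for instance $\int |A|^{-2p}|B|^{-2q}\,dx\leq (I_j/2)^{-2q}\int|A|^{-2p}\,dx$, which is uniformly bounded by \eqref{formule2}. For the off-diagonal terms your residue remark is correct (the pairing is a constant multiple of $(x_i-\bar z_j)^{-(2k+1)}$), and Lemma~\ref{scalar} together with \eqref{formule3} already contain the mechanism, though strictly speaking the mixed $\m$--$F$ pairings with $i\neq j$ require one extra line; the paper fills this in by the same half-line splitting argument used for \eqref{formule3}.
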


	\begin{proof}
		We define for $i \in \{ 1,\dots,N\}$, 
		
		\begin{equation}
			f_i(x,t) = \frac{\s_i(t)}{x-x_i(t)},~ g_i(x,t) = \frac{b_j}{x - v_j t - a_j}.
		\end{equation}
		We have from Lemma \ref{scalar},
		
		\begin{equation}
			\langle f_i,f_j \rangle = \frac{\s_i \bar \s_j}{x_i - \bar x_j} 2 i \pi.
		\end{equation}
		The same method gives
		
		\begin{equation}
			\langle g_i,g_j \rangle = \frac{b_i \bar b_j}{(v_i-v_j) t+a_i - \bar a_j} 2 i \pi,\quad 
			\langle f_i,g_j \rangle = \frac{\s_i \bar b_j}{x_i - v_j t - \bar a_j} 2 i \pi,
		\end{equation}
		
		\begin{equation}
			\text{ and}\quad\langle g_i,f_j \rangle = \frac{b_i \bar \s_j}{v_i t + a_i - \bar x_j} 2 i \pi.
		\end{equation}
	Hence, we obtain that
	
	\begin{multline}
		\langle \sum_{i=1}^N f_i - g_i, \sum_{j=1}^N f_j - g_j \rangle = \sum_{i,j} \langle f_i,f_j \rangle + \sum_{i,j} \langle g_i,g_j \rangle - \sum_{i,j} \langle g_i, f_j \rangle - \sum_{i,j} \langle f_i,g_j \rangle.
	\end{multline}
	
	Since $s_j$ and $b_j$ are bounded, we have for $i\neq j$, $\langle f_i,f_j \rangle \rightarrow 0$, $\langle g_i,g_j \rangle \rightarrow 0$, $\langle f_i,g_j \rangle \rightarrow 0$ and $\langle g_i,f_j \rangle \rightarrow 0$. We also have 
	\begin{equation}
		\langle g_j,g_j \rangle = \frac{|b_j|^2}{\im(a_j)} \pi,~ \langle f_i, f_j \rangle = \frac{|s_i(t)|^2}{\im(x_j)(t)} \pi,~ \langle f_i,g_i \rangle = \frac{\s_i(t) \bar b_i}{x_i(t) - v_i t - \bar a_i} 2 \pi.
	\end{equation}
	
	Now, we have $x_i(t) - v_i t \rightarrow a_i$, $|s_i(t)|^2 \rightarrow |b_j|^2$. Hence, 
	\begin{multline}
		\langle \sum_{i=1}^N f_i - g_i, \sum_{j=1}^N f_j - g_j \rangle = \sum_{i=1}^N \langle f_i,f_i \rangle + \sum_{i=1}^N \langle g_i,g_i \rangle - \sum_{i=1}^N \langle g_i, f_i \rangle - \sum_{i=1}^N \langle f_i,g_i \rangle \\
		+ \epsilon(t) = \sum_{i=1}^N \frac{|s_i(t)|^2}{\im(x_i)(t)} \pi + \sum_{i=1}^N \frac{|b_j|^2}{\im(a_j)} \pi - \sum_{i=1}^N \frac{\s_i(t) \bar b_i}{x_i(t) - v_i t - \bar a_i} 2\pi -  \sum_{i=1}^N \frac{\bar \s_i(t)  b_i}{a_i + v_i t -\bar x_i(t)} 2\pi \\
		+ \epsilon(t) \rightarrow 0.
	\end{multline}
	We now go on with the proof of \eqref{sobo}. For this, we recall
		\begin{equation}\label{formule2bis}
		\langle \frac{\s_i}{(x-x_i)^{k+1}} , \frac{\s_i}{(x-x_i)^{k+1}} \rangle = \frac{|s_i|^2}{\im(x_i)^{2k+1}} \frac{\sqrt{\pi} \Gamma(k+\frac{1}{2})}{\Gamma(k+1)},
	\end{equation}
	
	\begin{equation}\label{formule3bis}
		\langle \frac{\s_i}{(x-x_i)^{k+1}}, \frac{\s_j}{(x-x_j)^{k+1}} \rangle \xrightarrow{t\rightarrow \infty} 0.
	\end{equation}
	The same method gives
	
			\begin{equation}\label{formule4bis}
		\langle \frac{b_i}{(x-z_i)^{k+1}} , \frac{b_i}{(x-z_i)^{k+1}} \rangle = \frac{|b_i|^2}{\im(z_i)^{2k+1}} \frac{\sqrt{\pi} \Gamma(k+\frac{1}{2})}{\Gamma(k+1)},
	\end{equation}
	
	\begin{equation}\label{formule5bis}
		\langle \frac{b_i}{(x-z_i)^{k+1}}, \frac{b_j}{(x-z_j)^{k+1}} \rangle \xrightarrow{t\rightarrow \infty} 0,
	\end{equation}
	and we also have (recall that $z_i = v_i t+ a_i$)
	
	\begin{equation}
		\left( \frac{\partial}{\partial x} \right)^k \left( \frac{b_i}{x-z_i} \right) = (-1)^k k! \frac{b_i}{(x-z_i)^k}.
	\end{equation}
	Now, we need to study the cross terms. We have by definition
	
	\begin{equation}
		\langle \frac{\s_i}{(x-x_i)^k}, \frac{b_i}{(x-z_i)^k} \rangle = \int_{x=-\infty}^\infty \frac{\s_i}{(x-x_i)^k} \frac{\bar b_i}{(x-\bar z_i)^k} dx.
	\end{equation}
	Now, since $x_i - v_i t \rightarrow a_i(\infty)$ with $\im(a_i(\infty)) >0$, for $t$ big enough, we have 
	
	\begin{equation}
		\frac{|s_i|}{|x-x_i|^k} \leq C \min \left( \frac{|s_i|}{|x|^k}, \frac{2}{|\im(a_i(\infty))|} \right),
	\end{equation}
and 
	\begin{equation}
	\frac{|b_i|}{|x-z_i|^k} \leq C \min \left( \frac{|b_i|}{|x|^k}, \frac{2}{|\im(a_i(\infty))|} \right),
	\end{equation}
	which means that we can apply the dominated convergence theorem. Now,
	
	\begin{multline}
		\int_{x=-\infty}^\infty \frac{\s_i}{(x-x_i)^k} \frac{\bar b_i}{(x-\bar z_i)^k} dx =
		\int_{x=-\infty}^\infty \frac{\s_i}{(x-(x_i-v_i t))^k} \frac{\bar b_i}{(x-(\bar z_i-v_i t))^k} dx \\
		\xrightarrow{t\rightarrow \infty} \int_{x=-\infty}^\infty \frac{b_i}{(x-a_i(\infty))^k} \frac{\bar b_i}{(x-a_i(\infty))^k} dx = \frac{|b_i|^2}{\im(a_i)^2} \frac{\sqrt{\pi}\Gamma(k+\frac{1}{2})}{\Gamma(k+1)}.
	\end{multline}
	Finally, for $i \neq j$, we assume without loss of generality that $v_i \leq v_j$, for $t$ big enough,
	
	\begin{multline}
		\langle \frac{\s_i}{(x-x_i)^k}, \frac{b_j}{(x-z_j)^k} \rangle = \int_{x = -\infty}^\infty \frac{\s_i}{(x-x_i)^k} \frac{\bar b_j}{(x-\bar z_j)^k} dx \\
		=\int_{x = -\infty}^{(x_i+\bar z_j)/2} \frac{\s_i}{(x-x_i)^k} \frac{\bar b_j}{(x-\bar z_j)^k} dx + 
		\int_{x = (x_i+\bar z_j)/2}^{\infty} \frac{\s_i}{(x-x_i)^k} \frac{\bar b_j}{(x-\bar z_j)^k} dx 
	\end{multline}
	Now, on $(\infty,(x_i+\bar z_j)/2]$ we have that
	
	\begin{equation}
		|x-\bar z_j|\geq \frac{|\bar z_j - x_i|}{2},
	\end{equation}
	and on $[(x_i+\bar z_j)/2,\infty)$ we have that 
	
	\begin{equation}
		|x-\bar x_i|\geq \frac{|\bar z_j - x_i|}{2}.
	\end{equation}
	Hence, 
	
	\begin{equation}
		\left| \langle \frac{\s_i}{(x-x_i)^k}, \frac{b_j}{(x-z_j)^k} \rangle\right| \leq \frac{2 |s_i| |b_j| }{|\bar z_j - x_i|} \left( \int_{-\infty}^\infty \frac{1}{|x-x_i|^k} dx + \int_{-\infty}^\infty \frac{1}{|x-z_j|^k} \right) dx \rightarrow 0.
 	\end{equation}
	Now, we have 
	
	\begin{multline}
		\left| \left| \left(\frac{\partial}{\partial x}\right)^k (\m(t)-F(t)) \right| \right| = \sum_{i=1}^N \langle \frac{\s_i}{(x-x_i)^k},\frac{\s_i}{(x-x_i)^k} \rangle + \sum_{i=1}^N \langle \frac{b_i}{(x-z_i)^k},\frac{b_i}{(x-z_i)^k} \rangle \\
		-\sum_{i=1}^N \langle \frac{\s_i}{(x-x_i)^k},\frac{b_i}{(x-z_i)^k} \rangle - \sum_{i=1}^N \langle \frac{b_i}{(x-b_i)^k},\frac{\s_i}{(x-x_i)^k} \rangle +\oo(1) \rightarrow 0.
	\end{multline}
    
	\end{proof}

	\subsection{Sufficient conditions for a scattering behavior starting at $t=0$}
	
	In this section, we will establish sufficient conditions for initial data to admit a global in-time and scattering solution. The condition we define intuitively describes to a situation obtained after a long time, but can be checked locally for any time. 
    It corresponds to low interactions between the solitons, and we show that the interactions are not sufficient to reinforce the interactions, which leads to a situation preserved for all times moving forward.

    The interest of this result is twofold. First, we use it in conjunction with a fixpoint argument to create solutions with non-degenerate spectrum for all $N$ number of solitons. We also show that the condition we present has to be satisfied for some time for any solution, and hence obtain the well-posedness and scattering of any solution as long as the speeds are different.

    We denote 
    \[
    \left\{
    \begin{aligned}
        &S=\max_{j} |\s_j(0)|,\\
        &\nu=\min_{j\neq k} | \re(\dot x_j(0)) -  \re(\dot x_k(0))|,\\
        &D=\min_{j\neq k} |\re(x_j(0))- \re(x_k(0))|.
    \end{aligned}
    \right.
    \]
	
	We first present and show the following theorem.
	
	\begin{proposition}\label{scattering}

            Let $s_j(0)$ spins and $x_j(0)$ poles satisfying the constraints, and assume
    \[
    \begin{aligned}
        (i)\quad & \nu > 0, \text{ (different speeds) }\\
        (ii)\quad & v_j > v_k \Rightarrow \re(x_j(0)) > \re (x_k(0)). \text{ (same order as the speeds) } 
    \end{aligned}
    \]
    If there exists $\kappa$, $S'$ and $\eta>0$ satisfying
		\begin{equation}
			\left\{
			\begin{aligned}
                &\nu \geq \frac{4}{\kappa} + \eta,\\
				&S' > S+ \frac{2N S'^2}{ \eta D},\\
				&D^2 \geq \frac{2 N S'^2 \kappa}{ \eta },\\
			\end{aligned}
			\right.
		\end{equation}
		Then, there exists a global, non-turbulent solution for $t>0$ to the half wave map equation, with initial condition $(x_1(0),\dots,x_N(0))$, $(s_1(0),\dots,s_N(0))$. Also, all the following properties are satisfied.
		\begin{itemize}
			\item[($\alpha$)] The poles uniformly move away from each other, i.e. for any $j \neq k$,
			\begin{equation}
				\frac{\partial }{\partial t} |\re(x_j(t)) - \re(x_k(t))| \geq \eta.
			\end{equation}
			\item[($\beta$)] The speeds stay close to the initial speeds, i.e. for any $t>0$ and $j\in \{1,\dots,N\}$
			\begin{equation}
				\re\left( \dot x_j(t) \right) \in \left[ \re\left( \dot x_j(0)\right) - \frac{2}{\kappa}, \re\left(\dot x_j(0)\right)+ \frac{2}{\kappa}  \right].
			\end{equation} 
			\item[($\gamma$)] The spins stay bounded, i,e and we have for any $j$, for any $t>0$,
			\begin{equation}
				|\s_j(t) | \leq S'.
			\end{equation}
			\item[($\delta)$] The imaginary parts of the poles satisfy
    		\begin{equation}
    			\operatorname{Im}(x_j(t)) \geq \frac{|\s_j(0)| }{\sqrt{2}} e^{\frac{-4NS'}{\eta D}} \frac{1}{1+N \frac{S'}{D}}.
    		\end{equation}
		\end{itemize}
	\end{proposition}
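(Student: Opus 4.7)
The plan is a standard bootstrap/continuity argument driven by the three hypotheses on $\nu$, $S'$, and $D^2$. By local well-posedness of the Calogero--Moser system \eqref{CMS}, the solution exists on a maximal interval $[0,T_{\max})$. Introduce the bootstrap conditions
\[
(B_1):\ |\s_j(\tau)|\leq S',\qquad (B_2):\ \bigl|\re(\dot x_j(\tau))-\re(\dot x_j(0))\bigr|\leq \frac{2}{\kappa},
\]
for every $j$ and let $T^{*}$ be the supremum of times $t\in[0,T_{\max})$ on which they hold on $[0,t]$. Both are initially satisfied with strict slack (the second hypothesis implies $S<S'$), so $T^{*}>0$. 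The goal is to show that on $[0,T^{*})$ the three hypotheses force strictly improved versions of $(B_1)$ and $(B_2)$; by continuity this yields $T^{*}=T_{\max}$, and the uniform control of $|\s_j|$ and of the pole separation precludes finite-time breakdown, giving $T_{\max}=+\infty$.

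Property $(\alpha)$ follows immediately from $(B_2)$ and the ordering assumption $(ii)$: the real speed gaps $\re(\dot x_j-\dot x_k)(\tau)$ equal their initial value of magnitude at least $\nu$ plus a perturbation of total size at most $4/\kappa$, so by the first hypothesis $\nu\geq 4/\kappa+\eta$ they preserve both sign and magnitude at least $\eta$. Integrating gives $|\re(x_j-x_k)(\tau)|\geq D+\eta\tau$, hence $|x_j(\tau)-x_k(\tau)|\geq D+\eta\tau$. Feeding this separation into the spin equation yields $|\dot \s_j|\leq 2(N-1)(S')^2/(D+\eta\tau)^2$; integrating over $[0,+\infty)$ produces a total change of at most $2N(S')^2/(\eta D)$, which by the second hypothesis is strictly less than $S'-S$: this closes $(B_1)$ strictly. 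Feeding the separation into the pole equation yields $|\ddot x_j|\leq 4(N-1)(S')^2/(D+\eta\tau)^3$; integrating gives a total change of at most $2N(S')^2/(\eta D^2)$, which by the third hypothesis $D^2\geq 2N(S')^2\kappa/\eta$ is at most $1/\kappa<2/\kappa$: this closes $(B_2)$ strictly.

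For property $(\delta)$ the plan is to combine two ingredients. First, from $|\partial_\tau|\s_j|^2|\leq 4(N-1)S'|\s_j|^2/(D+\eta\tau)^2$ (using the spin equation and $(B_1)$ and the separation) and integration of the logarithmic derivative, one obtains $|\s_j(t)|\geq |\s_j(0)|\exp\bigl(-2NS'/(\eta D)\bigr)$. Second, isolate the self-interaction term in constraint \eqref{CONSS}, using $\s_j\cdot \bar \s_j/(x_j-\bar x_j)=|\s_j|^2/(2\iu\,\im(x_j))$, and bound the remaining terms via $|\s_j\cdot u|\leq |\s_j||u|$, $|\m_0|=1$, and the separations $|x_j-x_k|,|x_j-\bar x_k|\geq D$, to obtain an inequality of the form $|\s_j|/(2\,\im(x_j))\leq 1+CNS'/D$. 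Combining the two bounds yields the stated quantitative lower bound on $\im(x_j(t))$ up to the precise constants, and in particular $\im(x_j)$ stays uniformly above zero, which is what guarantees non-turbulence.

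The main obstacle is the bookkeeping: the three hypotheses on $\nu$, $S'$, $D^2$ must be consumed exactly once each by the separation, spin, and speed estimates respectively, and the resulting improvements must be strict. The margin is tight---each improper integral $\int_0^\infty(D+\eta\tau)^{-p}\,d\tau$ for $p=2,3$ produces precisely the quantity appearing on the corresponding hypothesis---so factors of $2$ and of $N$ versus $N-1$ must be tracked carefully. Once this orchestration is in place, the rest is a routine application of ODE continuation together with the closed formula for the self-interaction in \eqref{CONSS}.
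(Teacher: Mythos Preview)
Your proposal is correct and follows essentially the same continuity/bootstrap scheme as the paper: the three hypotheses are spent exactly as you describe, on the separation, spin, and speed estimates respectively, and the improper integrals $\int_0^\infty (D+\eta\tau)^{-p}\,d\tau$ for $p=2,3$ produce precisely the quantities on the right-hand sides. The paper runs two nested bootstraps (first $(\alpha),(\beta)$ assuming $(\gamma)$, then $(\gamma)$ using $(\alpha),(\beta)$) rather than your single combined bootstrap on $(B_1),(B_2)$, but this is only an organizational difference.

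The one genuine divergence is in $(\delta)$. You isolate the self-interaction term in the algebraic constraint \eqref{CONSS}, whereas the paper instead evaluates the pointwise bound $|\m(t,x)|=1$ at $x=\re(x_j(t))$; the $j$-th soliton and its conjugate then contribute $2\,\im(\s_j)/\im(x_j)$, and the identity $\s_j^2=0$ gives $|\im(\s_j)|=|\s_j|/\sqrt{2}$. This is where the $\sqrt{2}$ in the stated bound comes from---your constraint-based route yields the same structure but with a factor $2$ in place of $\sqrt{2}$, so it proves a slightly weaker (but still strictly positive) lower bound. Since you already flag ``up to the precise constants'' this is not a gap, but if you want the inequality in $(\delta)$ exactly as written you should switch to the pointwise argument.
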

	
	To prove the theorem, we will proceed by steps. First, we will assume that the spins stay bounded, and show all the properties except $(\gamma)$. Then, we will show that the spins are bounded. Lastly, we will show $(\delta)$. For the first step, we will use a bootstrap argument. For the second step, we will use the time evolution equations, again with a bootstrap argument. For the last step, we will use the fact that $\m$ is of constant norm $1$, and evaluate $\m$ at well-chosen points, i.e. $z_j = Re(x_j(t))$.

	\begin{proof}(\textbf{Of ($\alpha$) and ($\beta$), assuming that the spins are bounded})
		
		We will use a bootstrap argument and assume that the spins are bounded by $S'$. We define for $T>0$,
		
		\begin{equation}
			\textbf{Property 1: for t $\in [0, T ]$, } \re\left( \dot x_j(t) \right) \in \left[ \re \left(\dot x_j(0)\right) - \frac{2}{\kappa} , \re \left(\dot x_j(0)\right) + \frac{2}{\kappa} \right],
		\end{equation}
		and 
		
		\begin{equation}
			\textbf{Property 2: for t $\in [0, T ]$, } \re\left( \dot x_j(t) \right) \in \left[ \re \left(\dot x_j(0)\right) - \frac{1}{\kappa} , \re \left(\dot x_j(0)\right) + \frac{1}{\kappa} \right].
		\end{equation}
		We will show that assuming Property 1 leads to Property 2, and we will hence obtain property 1.
		
		We now assume \textbf{Property 1}. The property 
		
		\begin{equation}
			d \left( \left[ \re \left(\dot x_j(0)\right) - \frac{2}{\kappa} , \re \left(\dot x_j(0)\right) + \frac{2}{\kappa} \right],~ \left[ \re \left(\dot x_k(0)\right) - \frac{2}{\kappa} , \re \left(\dot x_k(0)\right) + \frac{2}{\kappa} \right] \right) \geq \eta
		\end{equation}
		is satisfied for any $j\neq k$, as long as we have for any $j \neq k$,

        \[
        \nu \geq \frac{4}{\kappa} + \eta,
        \]
        which is satisfied by assumption. From this, we obtain
        \[
        |\dot x_j(t) - \dot x_k(t)| \geq |\re(\dot x_j(t)) - \re (\dot x_k(t))| \geq \eta,
        \]
		This implies in particular for any $t \in [0,T]$, $|x_j(t) - x_k(t)| \geq D + t \eta$. The time evolution equation for $x_j$ writes
		\begin{equation}
			\poleTE{s}{x}
		\end{equation}
		Hence, we can write
		
		\begin{equation}
			\frac{\partial }{\partial t} | \dot x_j(t)| \leq 4 \frac{N S'^2}{ (|x_j(0) - x_k(0)| + t \eta)^3 }.
		\end{equation}
		In particular, we obtain
		\begin{equation}
			|\dot x_j(t) - \dot x_j(0)| \leq 4 N S'^2 \int_{z=0}^t \frac{1}{(D+z \eta)^3} dz = \frac{2 N S'^2}{ \eta D^2} - \frac{2 N S'^2}{ \eta (D^2 + t\eta)^2} \leq \frac{2 N S'^2}{\eta D^2}.
		\end{equation}
		Now, since by assumption, the inequality  
		\begin{equation}
			\frac{2NS'^2}{ \eta D^2 } \leq \frac{1}{\kappa}
		\end{equation}
		is satisfied, we obtain \textbf{Property 2}. Hence, Property 1 holds for any $T>0$.

	\end{proof}
	
	We now go on, and show that the spins are bounded.
	
	\begin{proof}(\textbf{That the spins are bounded})
	We will now show that the spins are bounded by a bootstrap argument. We define the properties
		
		\begin{equation}
			\textbf{Property 1: for t $\in [0, T ]$, } |\dot \s_j(t)| \leq S',
		\end{equation}
		and 
		
		\begin{equation}
			\textbf{Property 2: for t $\in [0, T ]$, } |\dot \s_j(t)| \leq S'',
		\end{equation}
		for some $S'' < S'$. We will show that assuming Property 1 leads to Property 2, and Property 1 will then be satisfied.
		
		We hence assume Property 1. Since the spins are now by assumption bounded by $S'$, we can hence use $(\alpha)$ and $(\beta)$.
		
		We recall the evolution equations for the spins
		
		\begin{equation}
			\spinTE{s}{x},
		\end{equation}	
		for $j \in \{1,\dots, N\}$. Using $(\alpha)$, $(\beta)$ and \textbf{Property 1}, 
	
		\begin{equation}
			\frac{\partial }{\partial t} |\s_j(t)| \leq \frac{2 N S'^2}{ (D+ \eta t )^2 }.
		\end{equation}
		Hence,
		
		\begin{multline}
			\left| |\s_j(t)| - |\s_j(0)| \right| \leq 2 N S'^2  \int_{z=0}^t  \frac{1}{(D+ \eta z)^2} \dd z  \leq \frac{2 N S'^2}{\eta^2} \left( \frac{1}{D/\eta} - \frac{1}{(D/\eta+ t)} \right) \\
			\leq \frac{2 N S'^2}{ \eta D }.
		\end{multline}
		since $|\s_j(0)| \leq S$, we obtain
		
		\begin{equation}
			|\s(t)| \leq |\s_j(0)| + \frac{2 N S'^2}{\eta D} \leq S + \frac{2 N S'^2}{\eta D} < S'' := \frac{1}{2} S' + \frac{1}{2} \left( S + \frac{2 N S'^2}{\eta D} \right),
		\end{equation}
        where the last inequality is strict since $S + \frac{2 N S'^2}{\eta D}<S'$. This concludes the bootstrap argument, so we have $|\s_j(t)| \leq S'' \leq S'$.
	\end{proof}
	
	We hence now know that under the aforementioned assumptions, properties $(\alpha)$, $(\beta)$, and $(\gamma)$ are satisfied. We now go on with $(\delta)$.
	
	\begin{proof}(\textbf{Of ($\delta$)})
		
		We will use the fact that the function
		
		\begin{equation}
			\m(t,x) = \m_0 + \iu \sum_{j=1}^N \frac{\s_j(t)}{x-x_j(t)} - \iu \sum_{j = 1}^N \frac{\bar \s_j(t)}{ x - \bar x_j(t)},
		\end{equation}
		stay bounded (in norm) by 1. We will use the fact that
		
		\begin{equation}
			|x_j(t) - x_k(t)| \geq |\re(x_j(t)) - \re(x_k(t))|  \geq  |\tilde x_j(0) - \tilde x_k(0)| + t \eta.
		\end{equation}
		We choose $x=\tilde x_j(t) = \re(x_j(t))$ and obtain
		
		\begin{equation}
			m(\tilde x_j(t)) = \m_0 + \iu \sum_{k\neq j} \frac{\s_k(t)}{ \tilde x_j(t) - x_k(t)} - \iu \sum_{k \neq j} \frac{\bar \s_j(t)}{ \tilde x_j(t) - \bar x_k(t)} + 2 \re\left( \iu \frac{\s_j(t)}{- \im (x_j(t))} \right).
		\end{equation}
		In particular, we obtain
		
		\begin{equation}
			\left| 2 \re\left( \frac{- i \s_j(t)}{ \im(x_j(t))} \right) \right| \leq 1 + 1 + 2 N \frac{ S'}{ D } \leq C.
		\end{equation}
		Now, 
		
		\begin{equation}
			\left| 2 \re\left( \frac{- \iu \s_j(t)}{ \im(x_j(t))} \right) \right| = 2 \left| \im\left( \frac{ \s_j(t)}{ \im(x_j(t))} \right) \right| = \frac{2}{|\im(x_j(t))|} \left| \im(s_j(t)) \right|.
		\end{equation}
		Since we have the compatibility condition $s_j(t)^2 = 0$, which yields in particular
		
		\begin{equation}
			\left\{
			\begin{aligned}
				&|\re(s_j(t))|^2 = |\im(s_j(t))|^2,\\
				&\langle \re(s_j(t)), \im(s_j(t)) \rangle =0,
			\end{aligned}
			\right.
		\end{equation}
		and hence $|\im(\s_j(t))| = \frac{|\s_j(t)|}{\sqrt{2}}$. We now show that the norm of the spins are bounded from below. The time evolution equations of the spins imply 
		
		\begin{equation}
			\left| \frac{\partial }{\partial t} |\s_j(t) | \right| \leq \frac{4N  S' |\s_j(t)|}{ (D+ \eta t)^2 }.
		\end{equation}
		
		In particular, we have
		
		\begin{equation}
			\left| \ln \left( \frac{|\s_j(t)|}{|\s_j(0)|} \right) \right| \leq \frac{4 N S'}{ \eta D },
		\end{equation}
		so 
		\begin{equation}
			|\s_j(t)| \geq |\s_j(0)| e^{\frac{-4NS'}{\eta D}}.
		\end{equation}
		Hence, we obtain
		\begin{equation}
			\im(x_j(t)) \geq \frac{|\s_j(0)| }{\sqrt{2}} e^{\frac{-4NS'}{\eta D}} \frac{1}{1+N \frac{S'}{D}}.
		\end{equation}
		
	\end{proof}

    \begin{corollary}
        Let $\m^0(x) $ be a rational function defined as 
    \[
    \m^0(x) = \m_0 + \sum_{j=1}^N \frac{\s_j(0)}{x-x_j(0)} + \sum_{j=1}^N \frac{\bar \s_j(0)}{x - \bar x_j(0)},
    \]
    satisfying the constraints and define $\alpha_0$ as 
    \[
    \alpha_0 = D - \frac{2^4 N S}{\nu}.
    \]
    Then, if $\alpha_0 > 0$, the Cauchy problem 
    \[
    \left\{
    \begin{aligned}
        &\partial_t \m(t,x) = \m(t,x) \times |\nabla| \m(t,x),~ t>0 \\
        &\m(0,x) = \m^0(x).
    \end{aligned}
    \right.
    \]
    admits a global in time solution for $t>0$, instantly scattering in the sense that its time evolution is described by Proposition \ref{scattering}. 

    Moreover, for $\m(t,x)$ a solution of $(HWM)$ with spins $s_j(t)$ and poles $x_j(t)$, if there exists $t_0$ such that 
    \[
    \alpha(t_0) = D(t_0) - \frac{2^4 N S(t_0)}{\nu(t_0)x} > 0,
    \]
    the solution satisfies the assumption of Proposition \ref{scattering} and is instantly scattering.

    \end{corollary}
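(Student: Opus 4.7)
The statement is essentially a quantitative repackaging of Proposition \ref{scattering}: the scalar condition $\alpha_0 > 0$, i.e.\ $\nu D > 16 N S$, must be shown to imply the existence of positive parameters $\kappa$, $S'$, $\eta$ satisfying the three structural inequalities in the hypothesis of that proposition. Once this is done, the first claim follows by direct invocation; the second claim (starting at time $t_0$ rather than $0$) follows by reading the first claim with initial data $\m(t_0,\cdot)$ and using time-translation invariance of \eqref{HWMS}. The scattering conclusion then comes for free via the chain $(C_1) \Rightarrow (C_2) \Rightarrow (C_3)$ established in the preceding subsection, together with Proposition \ref{sec6:sysScatt}.

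My plan is to propose the explicit ansatz $S' = 2S$, $\eta = \nu/2$, $\kappa = 8/\nu$ and verify the three inequalities in turn. With this choice, the first inequality $\nu \geq 4/\kappa + \eta$ becomes $\nu \geq \nu/2 + \nu/2$, i.e.\ an equality. The second inequality $S' > S + 2NS'^2/(\eta D)$ reduces to $S > 16 N S^2/(\nu D)$, which is exactly $\nu D > 16 N S$, i.e.\ $\alpha_0 > 0$. The third inequality $D^2 \geq 2 N S'^2 \kappa / \eta$ becomes $\nu^2 D^2 \geq 128 N S^2$; since $\alpha_0 > 0$ gives $\nu D > 16 N S$, one has $\nu^2 D^2 > 256 N^2 S^2 \geq 128 N S^2$ for every $N \geq 1$, so the third inequality holds strictly. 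Consequently properties $(\alpha)$, $(\beta)$, $(\gamma)$, $(\delta)$ of Proposition \ref{scattering} apply, yielding global existence for $t > 0$, linear-in-$t$ separation of poles, boundedness of spins, and a uniform positive lower bound on $\im(x_j(t))$.

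The main obstacle I anticipate is not algebraic but structural: Proposition \ref{scattering} also assumes the ordering compatibility $v_j > v_k \Rightarrow \re(x_j(0)) > \re(x_k(0))$, which is not directly encoded in $\alpha_0 > 0$. I would either treat this as an implicit hypothesis of the corollary, or insert a short preliminary argument: since $\nu(t_0) > 0$ separates the real speeds and the pole real parts evolve continuously, one can advance to a slightly later time $t_1 > t_0$ at which the real-part ordering of the poles is forced to match the speed ordering while $\alpha(t_1)$ remains positive by continuity, and then apply the first part at $t_1$. Finally, from properties $(\alpha)$--$(\gamma)$ the solution verifies $(C_1)$ (linear separation of poles, bounded spins), hence $(C_3)$, and Proposition \ref{sec6:sysScatt} then delivers scattering in every Sobolev norm, matching the phrase ``instantly scattering'' in the statement.
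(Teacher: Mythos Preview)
Your proposal is correct and follows exactly the paper's approach: the paper chooses the same ansatz $S'=2S$, $\eta=\nu/2$, $\kappa=8/\nu$ and verifies the three inequalities of Proposition~\ref{scattering} precisely as you do. Your additional remarks about the ordering hypothesis~$(ii)$ and the link to $(C_1)\Rightarrow(C_3)$ are not in the paper's proof of the corollary, but they are reasonable observations rather than departures from the argument.
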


    \begin{proof}
        It suffices to show that assuming 
        \[
        \alpha_0 =  D - \frac{2^4 N S}{\nu} > 0 
        \]
        provides the existence of $\kappa$, $S'$, $\eta$ such that the systen
        \[
        \left\{
        \begin{aligned}
            &\nu \geq \frac{4}{\kappa} + \eta,\\
            &S' > S + \frac{2N S'^2}{\eta D},\\
            &D^2 \geq \frac{2N S'^2 \kappa}{\eta}, 
        \end{aligned}
        \right.
        \]
        is satisfied. We choose $S' = 2S$, $\eta = \nu/2$, $\kappa = 8/\nu$, and remark that the system is satisfied. Indeed, the first line leads to $\nu = \nu$, and the second and third line respectively lead to
        \[
        D > \frac{16 N S}{\nu},\quad \text{ and } D^2 \geq \frac{2^7 N S^2}{\nu^2}
        \]
        which are satisfied by assumption.
        
    \end{proof}

    \section{Creation of $N$-solitons solutions with non-degenerate spectrum}\label{sec:creation}

\subsection{Defining the approximate initial condition}

In this subsection, we create an approximate initial condition. More precisely, our goal is to define an initial condition $\m(0,x)$, such that $\m$ is a valid initial condition and that it satisfies the instantaneous scattering condition provided by Theorem \ref{thm:localCondition}. Ideally, we would also like $\m$ to have the spectrum $v_1,\dots,v_N$ of our choice. 
Here, we first define an initial condition that has the initial speeds of our choice, and that almost satisfies the constraints. The next lemma precisely states what this function satisfies.

\begin{lemma}
    Let $w_1,\dots, w_N$ be a non-degenerate spectrum, i.e. $|w_i - w_j| \geq \nu >0$ for $i \neq j$. For any $\varepsilon > 0$, there exists an initial condition $\m(0,x)$ such that 

    \begin{equation}
    \left\{
        \begin{aligned}
            &|S_0| = \max_j |\s_j(0)| \leq 10,\\
            &\left| \dot x_j(0) - w_j \right| \leq \varepsilon,~ \forall j, \\
            &s_j^2 = 0,~ \forall j,\\
            &\left| s_j \cdot \left( i \m_0 - \sum_{k\neq j}^n \frac{\s_k}{x_j-x_k} + \sum_{k=1}^N \frac{\bar \s_k}{x_j - \bar x_k} \right) \right| \leq \varepsilon,~ \forall j.
        \end{aligned}
        \right.
    \end{equation}

\end{lemma}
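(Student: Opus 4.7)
The plan is to construct $\m(0, \cdot)$ as a well-separated superposition of $N$ exact single-soliton building blocks, one per target velocity $w_j$, chosen so that each block individually satisfies the relevant constraint and the residual inter-block interactions are driven to zero by the separation. The key intuition is that widely separating the real parts of the poles makes every cross term $\frac{\s_k}{x_j - x_k}$ and $\frac{\bar \s_k}{x_j - \bar x_k}$ small, so both the constraint defect and the velocity defect shrink uniformly.

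For the building blocks, I would invoke the traveling-wave classification of \cite{lenzmann2018energy}: for each $w_j$ there exists a single-pole traveling wave of \eqref{HWMS} of velocity $w_j$, with spin $\s_j^{(0)}$ and pole $x_j^{(0)}$ satisfying $\im(x_j^{(0)}) > 0$. By construction $(\s_j^{(0)})^2 = 0$ and the one-soliton version of the constraint,
\[
\s_j^{(0)} \cdot \left(\iu \m_0 + \frac{\bar \s_j^{(0)}}{x_j^{(0)} - \bar x_j^{(0)}}\right) = 0,
\]
holds. The parametrization $\s^{(0)} = \frac{a}{\sqrt{2}}(e_1 + \iu e_2)$, with $e_1, e_2 \in \mathbb{R}^3$ orthonormal and orthogonal to $\m_0$, reduces these identities to an explicit algebraic relation between $a$, $\im(x^{(0)})$, and $w_j$. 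A suitable normalization of the building block yields $|\s_j^{(0)}| \leq 10$.

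Next I would fix a large parameter $M > 0$ and set
\[
\m(0, x) = \m_0 + \sum_{j=1}^N \frac{\s_j^{(0)}}{x - (x_j^{(0)} + R_j)} + \sum_{j=1}^N \frac{\bar \s_j^{(0)}}{x - (\bar x_j^{(0)} + R_j)}, \quad R_j = M j.
\]
Translating in $x$ preserves both nilpotency and the self-interaction constraint and leaves the spins unchanged, so $\s_j(0)^2 = 0$ and $|\s_j(0)| \leq 10$ are immediate. For the fourth inequality, the bracket decomposes as the (vanishing) single-soliton piece plus the cross terms $-\sum_{k \neq j}\frac{\s_k(0)}{x_j(0) - x_k(0)} + \sum_{k\neq j}\frac{\bar\s_k(0)}{x_j(0) - \bar x_k(0)}$, each of which is $\OO(1/M)$ since the real parts of the poles are separated by at least $M$ while the spins are bounded by $10$; the total defect is therefore $\OO(N^2/M)$, which is $\leq \varepsilon$ for $M$ large. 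The initial velocity $\dot x_j(0)$ is determined from $\partial_t \m(0, \cdot) = \m(0, \cdot) \times |\nabla|\m(0, \cdot)$ by matching the order-$2$ pole at $x_j(0)$; for the isolated building block this returns exactly $w_j$, and the additional contributions from the other poles are of order $1/M$, so $|\dot x_j(0) - w_j| \leq \varepsilon$ for $M$ sufficiently large.

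The main obstacle I anticipate is the construction step itself: producing for every $w_j$ a single-soliton building block with an explicit uniform spin bound $|\s_j^{(0)}| \leq 10$. Depending on how the spin norm of a traveling wave of \eqref{HWMS} depends on its velocity in the classification of \cite{lenzmann2018energy}, this may require a dedicated scaling argument, or replacing the constant $10$ by one depending on the target spectrum and $\m_0$. Once the building blocks are available, the remainder is a straightforward quantitative perturbation of exact single-soliton data by $\OO(1/M)$ corrections.
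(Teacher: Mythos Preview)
Your approach is essentially the same as the paper's. Both build the approximate initial data as a superposition of $N$ one-soliton blocks, each solving the decoupled constraint $\s_j\cdot\bigl(\iu\m_0+\frac{\bar\s_j}{2\iu\im(x_j)}\bigr)=0$ and $\s_j^2=0$ exactly, and then push the real parts of the poles far apart so that all cross terms are $\OO(1/\text{separation})$. The paper uses $\re(x_j(0))=D\,w_j$ as the separation mechanism and gives the blocks explicitly via $\s_j(0)=\bigl(2e^{\iu\theta_1^j},\,2e^{-\iu\theta_1^j},\,2\sqrt{2|\cos(2\theta_1^j)|}\bigr)$ with $\theta_1^j=\tfrac12\cos^{-1}\!\bigl(\tfrac{1-w_j^2}{1+w_j^2}\bigr)$; this makes $|\s_j(0)|^2=8(1+|\cos 2\theta_1^j|)\le 16$ independently of $w_j$, which disposes of the spin-bound obstacle you flagged without any scaling argument. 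Note that this parametrization implicitly forces $|w_j|<1$, a restriction not visible in the lemma statement but present in the paper's construction.
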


\begin{proof}

        Let $w_1,\dots,w_N$ be a non-degenerate spectrum. We start by choosing a set of speeds $\dot x_j(0) = w_j$. We consider $D \in \mathbb{R}$ and choose 
$x_j(0) = D \cdot \dot x_j(0)$.
    In order to for a function to be a valid initial condition, the spins and poles have to satisfy 

	\begin{equation}
		\left\{
		\begin{aligned}
			&s_j(0)^2 = 0,\\
			&\dot x_j(0) = \frac{\s_j(0) \times \bar \s_j(0)}{ |\s_j(0)|^2 } \cdot (i \m_0 - \sum_{k \neq j} \frac{\s_k(0)}{ x_j(0) - x_k(0) } + \sum_{j \neq k} \frac{\bar \s_k(0)}{ x_j(0) - \bar x_k(0) }  ),\\
			&s_j(0) \cdot \left( i \m_0 - \sum_{k \neq j} \frac{\s_k(0)}{x_j(0)- x_k(0)} + \sum_{k \neq j} \frac{\bar \s_k(0)}{ x_j(0) - \bar x_k(0)} \right)=0.
		\end{aligned}
		\right.
	\end{equation}

    We will solve the compatibility conditions by assuming $|x_k(0) - x_j(0)| = \infty$ (so a simplified version), and obtain values for the spins and the imaginary parts. Now, this is "almost" satisfying the compatibility condition. We will use a fixpoint argument to justify that we can find an initial data that satisfies the compatibility condition, while being close to our approximate initial data. 

    We consider the approximate conditions 
    \begin{equation}\label{condapprox}
    	\left\{
    	\begin{aligned}
    		&\dot x_j(0) = \frac{\s_j(0) \times \bar \s_j(0)}{|\s_j(0)|^2} \cdot (i \m_0),\hspace{1.25cm} (i)\\
    		&\s_j(0) \cdot \left(i \m_0 + \frac{\bar \s_j(0)}{2 i \im(x_j(0))} \right) =0,\hspace{0.5cm} (ii)\\
    		&\s_j(0)^2 = 0.\hspace{4.1cm} (iii)
    	\end{aligned}
    	\right.
    \end{equation}
    We denote
	
	\begin{equation}
		\s_j(0) = \begin{bmatrix} \alpha_j \\  \beta_j \\ \gamma_j \end{bmatrix}, \text{ and } \m_0 = \begin{bmatrix} 0 \\  0 \\ 1 \end{bmatrix}.
	\end{equation}
	Now, (ii) of \eqref{condapprox} gives
	
	\begin{equation}
		\iu \s_j(0) \cdot \m_0 - \frac{\iu |\s_j(0)|^2}{2 \im(x_j(0))}=0,
	\end{equation}
	so
	
	\begin{equation}
		\gamma_j = \frac{|\s_j(0)|^2}{2 \im(x_j(0))} \in \mathbb{R}.
	\end{equation}
	We need $\s_j(0)^2=0$, which gives $\alpha_j^2 + \beta_j^2 + \gamma_j^2 = 0$. We will chose $\alpha_j = 2 e^{i \theta_1^j}$ and $\beta_j = 2 e^{i \theta_2^j}$. Hence, the compatibility condition gives
	
	\begin{equation}
		-\gamma_j^2 = 4 ( e^{\iu 2 \theta_1^j } + e^{\iu 2 \theta_2^j}) = 8 e^{2 \iu \frac{\theta_1^j+\theta_2^j}{2}} \cos(\theta_1^j - \theta_2^j).
	\end{equation}
	This implies that $\theta_1^j+\theta_2^j = 0[\pi]$. We will take $\theta_1^j + \theta_2^j =0$, and we obtain 

	\begin{equation}
		\gamma_j^2 = 8 \cos(\theta_1^j - \theta_2^j). 
	\end{equation}	
	Now, we look at (i) of \eqref{condapprox}. Since we have $|\s_j(0)|^2 = 4 + 4 + 8 \cos(\theta_1^j - \theta_2^j)$
	
	\begin{equation}
		\dot x_j(0) = \frac{\s_j(0) \times \bar \s_j(0)}{8 (1 + \cos(\theta_1^j + \theta_2^j) )} \cdot i \m_0 = i \frac{\alpha_j \bar \beta_j - \bar \alpha_j \beta_j}{8 (1+\cos(\theta_1^j - \theta_2^j))} = \frac{- 8 \sin(\theta_1^j  - \theta_2^j)}{8 + \delta_j^2}.
	\end{equation}
	Since we expect that $\dot x_j(\infty) = v_j$ is going to be very close to $\dot x_j(0)$, in order to chose $v_j$, we modify $\delta_j$. In fact, one can write
	
	\begin{equation}
		\dot x_j^2(0) = \frac{8 - \delta_j^2}{ 8 + \delta_j^2 },
	\end{equation}
	and $v_j \in \left[ \frac{N-2}{N} \dot x_j(0), \frac{N+2}{N} \dot x_j(0) \right]$, if one chooses the parameters accordingly. Inverting the relationship yields
	
	\begin{equation}
		\delta_j^2 = \frac{8 (1-\dot x_j(0)^2)}{1+\dot x_j(0)^2}.
	\end{equation}x
	Hence, we define 
 
	\fbox{
		\addtolength{\linewidth}{-2\fboxsep}%
		\addtolength{\linewidth}{-2\fboxrule}%
		 \begin{minipage}{\linewidth}	
	\begin{equation}\label{IC}\tag{IC}
	\s_j(0) = 	\begin{bmatrix} \alpha_j \\  \beta_j \\ \gamma_j \end{bmatrix} = \begin{bmatrix} 2 e^{\iu \theta_1^j} \\  2 e^{ - \iu \theta_1^j} \\ 2 \sqrt{ 2 |\cos(2 \theta_1^j)| } \end{bmatrix},~ \theta_1^j = \frac{1}{2} \cos^{-1} \left( \frac{1-\dot x_j(0)^2}{1+\dot x_j(0)^2} \right),
	\end{equation}
 \end{minipage}
}
    and we have to choose 

    \fbox{
		\addtolength{\linewidth}{-2\fboxsep}%
		\addtolength{\linewidth}{-2\fboxrule}%
		\begin{minipage}{\linewidth}
	\begin{equation}
		\im(x_j(0)) = \frac{|\s_j(0)|^2}{2 \gamma_j} = \frac{8 ( 1 + \cos(2 \theta_1^j)^2)}{2 \gamma_j}.
	\end{equation}
\end{minipage}
}
Now, the real speeds $\dot x_j(0)$ satisfy

\begin{multline}
    \dot x_j(0) = \frac{\s_j(0)\times \bar \s_j(0)}{|\s_j(0)|^2} \cdot \iu \m_0 \\ 
    + \frac{\s_j(0)\times \bar \s_j(0)}{|\s_j(0)|^2} \cdot \left( - \sum_{k\neq j} \frac{\s_k(0)}{x_j(0)-x_k(0)} + \sum_{j\neq k} \frac{\bar \s_k(0)}{x_j(0) - \bar x_k(0)} \right) \\ 
    = w_j + \frac{\s_j(0)\times \bar \s_j(0)}{|\s_j(0)|^2} \cdot \left( - \sum_{k\neq j} \frac{\s_k(0)}{x_j(0)-x_k(0)} + \sum_{j\neq k} \frac{\bar \s_k(0)}{x_j(0) - \bar x_k(0)} \right).
\end{multline}
So the distance between the desired spectrum and the initial speeds satisfy 

\begin{equation}
    |\dot x_j(0) - w_j | \leq \frac{2(N-1) |S_0|}{\nu D} \leq \varepsilon,
\end{equation}
for $D \geq \frac{2 (N-1) |S_0|}{\nu \varepsilon}$. Finally, the constraints are indeed almost satisfied, since $s_j^2 =0$ and 

\begin{multline}
    \left| \s_j(0) \cdot \left( \iu \m_0 - \sum_{k\neq j} \frac{\s_k(0)}{x_j(0)-x_k(0)} + \sum_{k = 1}^N \frac{\bar \s_k(0)}{x_j(0) - \bar x_k(0)} \right) \right| \\ 
    = \left| \s_j(0) \cdot \left( - \sum_{k\neq j} \frac{\s_k(0)}{x_j(0)-x_k(0)} + \sum_{k \neq j} \frac{\bar \s_k(0)}{x_j(0) - \bar x_k(0)} \right) \right| \leq \frac{2(N-1) |S_0|^2}{\nu D} \leq \varepsilon,
\end{multline}
for $D \geq \frac{2(N-1)|S_0|^2}{\nu \varepsilon}$.

\end{proof}

\subsection{Inductive definition of the sequence}

We define $\mathcal{S}_0 = (\s_1(0),\dots,\s_N(0))$, and for $\HH = (\h_1,\dots,\h_N) \in \left(\mathbb{C}^{3}\right)^N$ we define the three functionals 

\begin{equation}
    F(\HH)_j = \m_0 - \sum_{k\neq j} \frac{\h_k}{x_j-x_k} + \sum_{k\neq j} \frac{\bar \h_k}{x_j - \bar x_k} + \frac{\bar \h_j}{2 \iu \im(x_j)},
\end{equation}

\begin{equation}
    \tilde F(\HH)_j = - \sum_{k\neq j} \frac{\h_k}{x_j-x_k} + \sum_{k\neq j} \frac{\bar \h_k}{x_j - \bar x_k} + \frac{\bar \h_j}{2 \iu \im(x_j)},
\end{equation}

\begin{equation}
    \vardbtilde F(\HH)_j = \m_0 - \sum_{k\neq j} \frac{\h_k}{x_j-x_k} + \sum_{k\neq j} \frac{\bar \h_k}{x_j - \bar x_k} + \frac{\bar \h_j}{2 \iu \im(x_j)}.
\end{equation}

Now, the constraints can be rewritten using the mapping $F$. Indeed, $S=(\s_1,\dots,\s_N)$ satisfies the constraints if and only if

\begin{equation}
    S \cdot F(S) = 0,~ S^2 =0,
\end{equation}

where $S \cdot F(S) =0$ and $S^2 = 0$ are two systems of $N$ equations given by 

\begin{equation}
    \begin{aligned}
        &\langle s_j\cdot F(S)_j \rangle_{\mathbb{R}^3} =0,~\forall j \in \{1,\dots,N \},\\
        &\langle s_j \cdot s_j \rangle_{\mathbb{R}^3} = 0,~\forall j\in \{ 1,\dots,N\}.
    \end{aligned}
\end{equation}

In our situation, the biggest order term is due to $\m_0$ in the expression of $F$ and is of order $1$. Then, the biggest order term in $\tilde F$, which is $F$ without the term in $\m_0$, is $\frac{\bar \h_j}{\im(x_j)}$, and is of order $|\h_j|$. Finally, the other terms are of order $|\h_j|/D$.

For $S_0$ and at any step $S_l$, we have $S_l^2=0$ but $S_l \cdot F(S_l) \neq 0$. Our goal is to define $S_{l+1}$ so that this condition is closer to being satisfied. At every step, we thus define $T_l$ so that $(S_l + T_l) \cdot F(S_l+ T_l)$ is close to zero, and $H_l$ so that $S_{l+1}=S_l+T_l+H_l$ satisfies $S_{l+1}^2=0$. We also want that $H_l$ does not interfere with the improvement from $T_l$ for the first condition. 

\textbf{Definition of $T_l$ and $H_l$ from $S_l$:}

We consider the condition

\begin{equation}
    (S_l+T_l) \cdot F(S_l+T_l) = 0,
\end{equation}
which gives
\begin{equation}
    S_l \cdot F(S_l) + S_l \cdot \tilde F(T_l) + T_l \cdot F(S_l) + T_l \cdot \tilde F(T_l) =0.
\end{equation}

We will neglect the last term and solve a simplified version of $S_l \cdot F(S_l) + S_l \cdot \tilde F(T_l) + T_l \cdot F(S_l) =0$. This equation can be written for any $j$ as

	\begin{multline}\label{ancienne}
		\s_j^l \cdot \left(\iu\m_0 - \sum_{k\neq j} \frac{\s_k^l}{x_j-x_k} + \sum_{k \neq j} \frac{\bar \s_k^l}{x_j - \bar x_k} - \frac{i \bar \s_j^l}{2 \im(x_j)} \right) \\
		+ 	\s_j^l \cdot \left( - \sum_{k\neq j} \frac{\mathbf{t}_k^l}{x_j-x_k} + \sum_{k \neq j} \frac{ \bar{\mathbf{t}}_k^l}{x_j - \bar x_k} - \frac{\iu \bar{\mathbf{t}}_j^l}{2 \im(x_j)} \right) \\ 
		+ \mathbf{t}_j^l \cdot \left(\iu\m_0 - \sum_{k\neq j} \frac{\s_k^l}{x_j-x_k} + \sum_{k \neq j} \frac{\bar \s_k^l}{x_j - \bar x_k} - \frac{\iu \bar \s_j^l}{2 \im(x_j)} \right) = 0.
	\end{multline}

	We will solve instead the equation
	
	\begin{equation}\label{vraieequationT}
		\s_j^l \cdot F_j(S_l) - \frac{\iu \s_j^l \cdot \bar{ \mathbf{t}}_j^l}{2 \im(x_j) } + \iu \mathbf{t}_j^l \cdot \m_0 - \frac{\iu \mathbf{t}_j^l \cdot \bar{\s}_j^l}{2 \im(x_j)}=0.
	\end{equation}

    We take $\mathbf{t}_j^l$ of the form 
	
	\begin{equation}\label{formeT}
		\mathbf{t}_j^l = (0,0,\kappa_j^l),~ \kappa_j^l \in \mathbb{C}.
	\end{equation}
	
	Solving \eqref{vraieequationT} using \eqref{formeT} gives $\re(\kappa_j^l) = \im(\s_j^l \cdot F_j(S_l))$,
	and 
	
	\begin{equation}
		\im(\kappa_j^l) = \frac{\im(x_j) \re(\s_j^l \cdot F_j(S_l))}{ \re(\s_j^l[3])-1 } - \re(\kappa_j^l) \im(\s_j^l[3]).
	\end{equation}

This define $T_l$. Note that $T_l$ satisfies (as long as $|S_l-S_0| \leq |S_0|/2$)

\begin{equation}
    \boxed{|T_l| \leq (K+|S_l|) |S_l \cdot F(S_l)|},
\end{equation}

and plugging \eqref{vraieequationT} into \eqref{ancienne} gives

\begin{equation}
    \boxed{|(S_l+T_l)\cdot F(S_l+T_l)| \leq \frac{4N |S_l|(K+|S_l|)}{D\nu} |S_l\cdot F(S_l)|}.
\end{equation}

We now define $H_l$. We consider the two conditions

\begin{equation}
		(S_l + T_l + H_l)^2 =0,~ H_l \cdot F(S_l+t_l) =0.
	\end{equation}
	
	This means that we want for any $j$,
	
	\begin{equation}
		(\s_j^l+{\mathbf{t}}_j^l + \h_j^l)^2 = 0,
	\end{equation}
	
	and 
	
	\begin{equation}
		\h_j^l \cdot \left( \iu \m_0 - \sum_{k\neq j} \frac{\s_j^l + \mathbf{t}_j^l}{x_j-x_k} + \sum_{k\neq j} \frac{\bar \s_j^l + \mathbf{t}_j^l}{x_j - \bar x_k} - \frac{ \iu \bar \s_j^l + \iu \bar{ \mathbf{t}}_j^l}{2 \im(x_j)} \right)=0.
	\end{equation}
	
	We consider $\mathbf{k}_j^l$ a unitary vector of the form $(k[1],k[2],0)$ that is orthogonal to $F(\s_j^l + \mathbf{t}_j^l)$, and $\h_j^l = p_j^l \mathbf{k}_j^l$, with $p_j^l \in \mathbb{C}$. We write $(\s_j^l+\mathbf{t}_j^l+p_j^l \mathbf{k}_j^l)^2 =0$, which is equivalent to
	
	\begin{equation}
		(\mathbf{t}_j^l)^2 + 2 (\s_j^l \mathbf{t}_j^l) + (p_j^l \mathbf{k}_j^l)^2 + 2 (p_j^l \mathbf{k}_j^l \s_j^l) + 2 (p_j^l \mathbf{k}_j^l) =0.
	\end{equation}
	
	Now, this is a complex equality, that we can solve for $p_j^l$,
	
	\begin{equation}
		\Delta = (2 \mathbf{k}_j^l \s_j^l + 2 \mathbf{k}_j^l \mathbf{t}_j^l)^2 - 4 (\mathbf{k}_j^l)^2 ((\mathbf{t}_j^l)^2 + 2 (\s_j^l \mathbf{t}_j^l) ),
	\end{equation}
	
	we obtain two possible solutions
	
	\begin{equation}
		p_j^l = \frac{-(2 \mathbf{k}_j^l \s_j^l + 2 \mathbf{k}_j^l \mathbf{t}_j^l) \pm \sqrt{\Delta}}{2 (\mathbf{k}_j^l)^2}.
	\end{equation}
	
	We choose the $+$ solution because this is the one that will give the smallest $p_j^l$. 
	
	We obtain
	
	\begin{multline}
		\left| -(2 \mathbf{k}_j^l \s_j^l + 2 \mathbf{k}_j^l \mathbf{t}_j^l) + \sqrt{\Delta} \right| \\ \leq \left| -(2 \mathbf{k}_j^l \s_j^l + 2 \mathbf{k}_j^l \mathbf{t}_j^l) \left(1 - \sqrt{1- \frac{4 (\mathbf{k}_j^l)^2 ((\mathbf{t}_j^l)^2 + 2 (\s_j^l \mathbf{t}_j^l) ) }{ 4 (\mathbf{k}_j^l \s_j^l + \mathbf{k}_j^l \mathbf{t}_j^l)^2 }} \right) \right| \\
		\leq \left| (2 \mathbf{k}_j^l (\s_j^l + \mathbf{t}_j^l))\frac{4 (\mathbf{k}_j^l)^2 ((\mathbf{t}_j^l)^2 + 2 (\s_j^l \mathbf{t}_j^l) )}{4 ( \mathbf{k}_j^l \s_j^l + \mathbf{k}_j^l \mathbf{t}_j^l)^2} \right| \\
		\leq \frac{C}{2} \left| \frac{(\mathbf{k}_j^l)^2 (\mathbf{t}_j^l)^2   }{\mathbf{k}_j^l \mathbf{t}_j^l} + \frac{\mathbf{k}_j^l \s_j^l \mathbf{k}_j^l \mathbf{t}_j^l}{\mathbf{k}_j^l \s_j^l} \right| \leq \frac{C}{D^l}
	\end{multline}
	
	Overall, we obtain 
	
	\begin{equation}
		\boxed{|H_l|\leq 2 |T_l|\cdot \frac{|T_l+2 S_l|}{|T_l+S_l|}},
	\end{equation}
	
	and with $S_{l+1} = S_l + T_l + H_l$,
	
	\begin{equation}
		\boxed{\s_{l+1}^2 = (S_l+T_l+H_l)^2 = 0}.
	\end{equation}

\subsection{Growth estimates for the soundness of the fixpoint}

Let $S_0$ be defined as in \eqref{IC}. We assume that the sequences $T_l$, $H_l$ and $S_l$ are defined inductively, such that

\begin{equation}
\left\{
    \begin{aligned}
    &|T_l| \leq (K+|S_l)| |S_l \cdot F(S_l)|,\\
    &|(S_l+T_l) \cdot F(S_l + T_l)| \leq \frac{4N |S_l| (K+|S_l|)}{D \nu} |S_l \cdot F(S_l)|,
    \end{aligned}
\right.
\end{equation}

\begin{equation}
    \left\{
    \begin{aligned}
        &|H_l| \leq 2 |T_l| \cdot \left| \frac{T_l + 2 S_l}{T_l + S_l} \right|,\\
        &(S_l + T_l + H_l)^2 =0 ,\\
    \end{aligned}
    \right.
\end{equation}

and finally with $S_{l+1} = S_l + T_l + H_l$,

\begin{multline}
    \left| S_{l+1} \cdot F(S_{l+1}) \right| \leq \frac{4N |S_l| (K+|S_l|) |S_l \cdot F(S_l)|}{D \nu} \\
    + (2N+2)^2 C^2 (K+|S_l|)^2 |S_l \cdot F(S_l)|^2 + \frac{2(N-1)}{D\nu} (|T_l|+|S_l|) \cdot |H_l|.
\end{multline}

Note that by the choice of $S_0$, we have 

\begin{equation}
    |S_0 \cdot F(S_0)| \leq \frac{16N}{D \nu}.
\end{equation}

Consequently, for a fixed initial condition, and so a fixed $\nu,$ $N$, $\im(x_i)$, for any $\varepsilon>0$, there exists $D_0$ such that for $D \geq D_0$,

\begin{equation}
    \left| (S_l)_j[i] - (S_0)_j[i] \right| \leq \varepsilon |(S_0)_j[i]|.
\end{equation}

We show this now by induction, together with the property

\begin{equation}
    \exists C,C_0>0,~ |S_l \cdot F(S_l) | \leq \frac{C_0 \cdot C^{l}}{D^{l+1}},
\end{equation}

where $C_0$ and $C$ are uniform with respect to $D$.

First, $|S_0 \cdot F(S_0)| \leq \frac{16N}{D\nu}$, so 

\begin{equation}
    |T_0| \leq (K+|S_0|) |S_0 \cdot F(S_0)| \leq \frac{16N (K+|S_0|)}{D \nu},
\end{equation}

and 

\begin{equation}
    |H_0| \leq 2 |T_0| \cdot \left| \frac{T_0+2 S_0}{T_0 + S_0} \right|,
\end{equation}

so for $D\geq D_1$ big enough, 

\begin{equation}
    |H_0| + |T_0| \leq \min_j \min_i \varepsilon |(S_0)_j[i]|.
\end{equation}

Hence, $|S_1| \leq (1+\varepsilon)|S_0|$, so 

\begin{equation}
    |S_1 \cdot F(S_1) | \leq \frac{C_0\cdot C}{D^2}, 
\end{equation}

for $D\geq D_2$.

We now assume that the property is satisfied for $k \leq l$, and define $T_{l}$ and $H_l$. The conditions become 

\begin{equation}
    |T_l| \leq (K+(1+\varepsilon)|S_0|) \frac{C_0 C_1^{l}}{D^{l+1}},
\end{equation}
and $|H_l| \leq 4 |T_l|$. We write (using $K \leq 2$, which we will see later)

\begin{multline}
    |S_{l+1} - S_0| \leq \sum_{k=0}^l |S_{k+1}-S_k| \leq \sum_{k=0}^l \left( |H_k| + |T_k| \right) \\
    \leq \sum_{k=0}^l 5 (K+(1+\varepsilon)|S_0|) \frac{C_0 C_1^{k+1}}{D^{k+1}} \leq 50 C_0 \frac{C_1}{D} \frac{1}{1-\frac{C_1}{D}}.
\end{multline}

For $D_3\geq 100 C_0 C_1 |S_0| / \varepsilon$ (not depending on $l$), $|S_{l+1}-S_l| \leq \varepsilon |S_0|$.

Then, 

\begin{multline}
    |S_{l+1} \cdot F(S_{l+1})| \leq \Bigg( \frac{8N |S_0| (K+2|S_0|) }{D\nu} \\+ (2N+2)^2 C^2 (K+2 |S_0|^2) |S_l \cdot F(S_l)|  \Bigg) |S_l \cdot F(S_{l})| + 8 \frac{2(N-1)}{D\nu}  |S_0| |H_l|,
\end{multline}
now

\begin{equation}
    \left| \frac{8N |S_0| (K+2|S_0|) }{D\nu} \right| \leq \left( \frac{8N|S_0|(K+2|S_0|)}{\nu} \right) \frac{1}{D},
\end{equation}

\begin{multline}
    \left| (2N+2)^2 C^2 (K+2|S_0|^2) S_l \cdot F(S_l) \right| \leq \left(  \frac{(2N+2)^2 C^2 (K+2|S_0|^2)C_0 C_1^l}{D^l} \right) \frac{1}{D} \\
    \leq \left(  (2N+2)^2 C^2 (K+2|S_0|^2)C_0 \right) \frac{1}{D} 
\end{multline}
and

\begin{equation}
    \left| 8 \frac{2(N-1)}{D\nu} |S_0| |H_l| \right| \leq \left( \frac{64 (N-1) |S_0| (K+2 |S_0|)C_0}{ \nu} \right) \frac{1}{D} \frac{C_1^l}{D^{l+1}}.
\end{equation}
For $C_1$ satisfying

\begin{multline}
    C_1 \geq \left( \frac{8N|S_0|(K+2|S_0|)}{\nu} \right) + \left(  (2N+2)^2 C^2 (K+2|S_0|^2)C_0 \right) + \\ \left( \frac{16 (N-1) |S_0| (K+2 |S_0|)C_0}{\nu} \right),
\end{multline}
    we indeed have 

    \begin{equation}
        |S_{l+1} \cdot F(S_{l+1})| \leq \frac{C_0 C_1^{l+1}}{D^{l+2}}.
    \end{equation}
Hence, for $C_1$ and $C_0$ independent on $D$, we have for $D$ large enough and for any $l \in \mathbb{N}$,

\begin{equation}
    \left\{
    \begin{aligned}
    &|S_l \cdot F(S_l)| \leq \frac{C_0 C_1^{l+1}}{D^{l+2}},\\
    &|S_l - S_0| \leq \varepsilon |S_0|.
    \end{aligned}
    \right.
\end{equation}
We deduce that $S_l$ is a Cauchy sequence, converging to $S_\infty$, satisfying

\begin{equation}
    \left\{
    \begin{aligned}
        &S_\infty^2 =0,\\
        &S_\infty \cdot F(S_\infty) = 0.
    \end{aligned}
    \right.
\end{equation}
Now, $Re(x_i)$ and $\im(x_i)$ are defined, but $\dot x_i(0) $ are not. We will estimate them now. We will use that 

\begin{equation}
    w_j = \frac{(S_0)_j(0) \times (\bar \s_0)_j(0) }{|(S_0)_j(0)^2|}\cdot \iu \m_0,
\end{equation}
and compare it with 

\begin{equation}
    \dot x_j(0) = \frac{(S_\infty)_j(0) \times (\bar S_\infty)_j(0) }{|(S_\infty)_j(0)^2|}\cdot \left( \iu \m_0 - \sum_{k\neq j} \frac{(S_\infty)_k(0)}{x_j(0)-x_k(0)} + \sum_{k\neq j} \frac{(\bar \s_\infty)_k(0)}{x_j(0)-\bar x_k(0)}  \right).
\end{equation}
Since $w_j\neq \pm 1$, then $\gamma_j \neq 0$, so $\im(x_j)$ is bounded. Then, 

\begin{equation}
   \left|  \frac{(S_0)_j \times (\bar S_0)_j }{|(S_0)_j^2|} - \frac{(S_\infty)_j \times (\bar S_\infty)_j }{|(S_\infty)_j^2|}  \right| \leq \left| (S_0)-(S_\infty) \right| \cdot ||\nabla_s h(s)||_\infty,
\end{equation}
with 

\begin{equation}
    h: \left\{
    \begin{aligned}
        &s\in \mathbb{C}^3 \to \mathbb{C}^3,\\
        &s \mapsto \frac{s\times s}{|s|^2}.
    \end{aligned}
    \right.
\end{equation}
Hence, 

\begin{multline}
   \left|  \frac{(S_0)_j \times (\bar S_0)_j }{|(S_0)_j^2|} - \frac{(S_\infty)_j \times (\bar S_\infty)_j }{|(S_\infty)_j^2|}  \right| \leq \left| (S_0)-(S_\infty) \right| \cdot \left( \frac{2 |s|^3 + 2 |s|^3}{|s|^4} \right)_\infty\\
   \leq \varepsilon |S_0| \sup_{[S_0,S_\infty]}\frac{4}{|s|} \leq 4 \frac{\varepsilon}{1-\varepsilon}.
\end{multline}
Finally, we make an estimation for the second term appearing in the difference between $\dot x_j(0)$ and $w_j$. We find an upper bound for the two terms involved in the inner product. 

\begin{equation}
    \left|\frac{(S_\infty)_j \times (\bar S_\infty)_j }{|(S_\infty)_j^2|} \right| \leq 1,
\end{equation}
and with $I = \max |\im(x_i)|$,

\begin{equation}
    \left| \sum_{j\neq k} \frac{(S_\infty)_k}{x_j(0)-x_k(0)} \right| \leq \frac{N (1+\varepsilon) |S_0|}{\nu D - 2 I}, 
\end{equation}

\begin{equation}
    \left| \sum_{j\neq k} \frac{(\bar S_\infty)_k}{x_j(0)-\bar x_k(0)} \right| \leq \frac{N (1+\varepsilon) |S_0|}{\nu D}.
\end{equation}
Hence, the distance between the two speeds satisfy

\begin{equation}
    |\dot x_j(0) - w_j| \leq 8 \varepsilon + \frac{N(1+\varepsilon)|S_0|}{\nu D -2 I} + \frac{N (1+\varepsilon) |S_0|}{\nu D}.
\end{equation}
In particular, for $D$ big enough, $|\dot x_j(0) - w_j| \leq 12 \varepsilon$, which means that we obtain an initial condition satisfying all the initial conditions, and 

\begin{equation}
    \min |x_i(0) - x_j(0)| \geq \nu /2.
\end{equation}

\subsection{Asymptotic spectrum of the created solution}

To summarize, for a given spectrum $(w_1,\dots,w_N)$, we can create an initial condition for any $\varepsilon$ with spins $(s_1,\dots,s_N)$ and poles $(x_1,\dots,x_N)$ such that for all $j$,
\begin{equation}
\left\{
    \begin{aligned}
        &|\s_j(0)| \leq 10,~\forall j,\\
        &|\dot x_j(0) - w_j| \in [-12\varepsilon, 12\varepsilon],\\
        &\min |x_j(0)-x_k(0)| = \nu D.
    \end{aligned}
    \right.
\end{equation}

Hence, for $D$ big enough, the instant scattering condition required for Theorem \ref{thm:localCondition} to apply, is automatically satisfied. Using the triangular inequality

\begin{equation}
    |v_j - w_j| \leq |v_j - \dot x_j(0)| + |w_j - \dot x_j(0)|,
\end{equation}

we obtain a solution with a spectrum as close as we want to $(w_j)$.

    \section{Systematic scattering, trivial scattering map}\label{sec:sys}
	\subsection{Guaranteed scattering behavior for large times}
	
	In this section, we assume that $v_i\neq v_j$, where $v_i$ are the eigenvalue of $T_{U_0}$. We show that the solution of the half wave map scatters at $t = + \infty$, i.e. that after a time $t_0$, the solution satisfies Proposition \ref{scattering}. To show this, we show that there exists $t_0>0$ such that 
	
	\begin{equation}
		\alpha(x,s)(t) = D - \frac{16 N S}{\nu} >0,
	\end{equation}
	for $t\geq t_0$. It is a consequence of the two following lemmas.
	
	\begin{lemma}\label{lemma:poles}
		There exists $t_0 \geq 0$, such that $t \geq t_0$ implies
		\begin{equation}
			|x_i(t) - x_j(t)| \geq \frac{t \cdot \min_{i\neq j} |v_i-v_j| }{2}.
		\end{equation}
	\end{lemma}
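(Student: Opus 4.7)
The plan is to read the conclusion directly off the Matsuno-Lax identity \eqref{propMatsuno}. Since $X(t) = U(t)(X(0) + tL(0))U(t)^{-1}$ and $X(t)$ is the diagonal matrix with entries $x_1(t),\dots,x_N(t)$, the multiset $\{x_j(t)\}_{j=1}^N$ coincides with the spectrum of $X(0) + tL(0)$. It therefore suffices to show that, for $t$ large, the $N$ eigenvalues of $X(0)+tL(0)$ are pairwise separated by at least $\tfrac{t}{2}\min_{i\neq j}|v_i - v_j|$; the stated inequality for the $x_j(t)$ then follows regardless of how eigenvalues are matched to labels.

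To carry this out I would first exploit the non-singular spectrum hypothesis to diagonalize $L(0) = P\operatorname{diag}(v_1,\dots,v_N)P^{-1}$, which is possible precisely because the $v_j$ are distinct. Conjugating yields
\[
P^{-1}\bigl(X(0) + tL(0)\bigr)P = t\operatorname{diag}(v_1,\dots,v_N) + Y, \qquad Y := P^{-1}X(0)P,
\]
a matrix whose diagonal grows linearly in $t$ while the off-diagonal part is a fixed bounded perturbation. I would then apply Gershgorin's disk theorem: the $j$-th disk is centered at $tv_j + Y_{jj}$ with a $t$-independent radius $R := \max_j \sum_{k\neq j}|Y_{jk}|$. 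For $t \geq t_1$ large enough the $N$ disks are pairwise disjoint, so each contains exactly one eigenvalue $\lambda_j(t)$ of $X(0) + tL(0)$ and $|\lambda_j(t) - tv_j - Y_{jj}| \leq R$.

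Combining, for $j\neq k$,
\[
|\lambda_j(t) - \lambda_k(t)| \;\geq\; t|v_j - v_k| - |Y_{jj} - Y_{kk}| - 2R \;\geq\; t\,\nu_0 - C,
\]
where $\nu_0 := \min_{i\neq l}|v_i - v_l|$ and $C$ depends only on $X(0)$ and $L(0)$. Choosing $t_0 \geq \max(t_1,\, 2C/\nu_0)$ gives $|\lambda_j(t) - \lambda_k(t)| \geq t\nu_0/2$ for all $t\geq t_0$, which transfers to the $x_j(t)$ as noted above. No substantial obstacle is expected: the only delicate point is the $t$-uniform decoupling of the eigenvalues into $N$ branches, and that is precisely what Gershgorin provides once the $v_j$ are distinct. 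An alternative route is regular perturbation theory applied to $L(0) + X(0)/t$ near $t = \infty$, which yields the same estimate with $\lambda_j(t) = tv_j + O(1)$.
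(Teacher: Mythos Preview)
Your proposal is correct and follows essentially the same route as the paper: you identify the poles with the spectrum of $X(0)+tL(0)$ via the Matsuno identity, diagonalize $L(0)$ using the non-singular spectrum hypothesis, and apply Gershgorin to the resulting diagonally dominant matrix to get $\lambda_j(t)=tv_j+O(1)$. The paper's proof is terser but identical in substance; your version is in fact more careful about the eigenvalue labeling issue.
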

	
	\begin{lemma}\label{lemma:spins}
		There exists $C>0$ such that for any $t \geq 0$, for any $j$,
		\begin{equation}
			|\s_j(t)| \leq C.
		\end{equation}
	\end{lemma}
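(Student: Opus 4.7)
The plan is to derive the bound from the algebraic constraint \eqref{CONSS} combined with Lemma~\ref{lemma:poles} and a uniform upper bound on the imaginary parts of the poles. First, as already observed in the proof of $(C_1)\Rightarrow(C_3)$, the sum $\sum_j \dot x_j(t) = \sum_j v_j$ is real and conserved, so $\sum_j \im(x_j(t))$ is constant; combined with $\im(x_j) > 0$, this yields $\im(x_j(t)) \leq C_I := \sum_k \im(x_k(0))$ for all $t \geq 0$. Since the rational solution exists globally, $S_{\max}(t) := \max_j |\s_j(t)|$ is bounded on every compact time interval, so it suffices to control $S_{\max}(t)$ for $t$ beyond some threshold.

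For $t \geq t_0$ given by Lemma~\ref{lemma:poles}, I would extract $|\s_j|$ directly from \eqref{CONSS}. Separating the diagonal $k=j$ contribution in the second sum and using $\s_j \cdot \bar\s_j = |\s_j|^2$ together with $x_j - \bar x_j = 2\iu\,\im(x_j)$, the constraint rearranges into an identity of the form
\[
\frac{|\s_j(t)|^2}{2\,\im(x_j(t))} = \Bigl|\, \s_j \cdot \m_0 + (\text{off-diagonal terms}) \,\Bigr|,
\]
where the off-diagonal terms are $(N-1)$-sums of $(\s_j \cdot \s_k)/(x_j - x_k)$ and $(\s_j \cdot \bar\s_k)/(x_j - \bar x_k)$. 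Bounding $|\s_j \cdot \m_0| \leq |\s_j|$ by Cauchy--Schwarz and using Lemma~\ref{lemma:poles}, the off-diagonal contribution is at most $C N |\s_j| S_{\max}(t)/(\eta t)$, with $\eta = \min_{j\neq k} |v_j - v_k|$. Dividing by $|\s_j|$ (trivial if zero) and using $\im(x_j) \leq C_I$ gives
\[
|\s_j(t)| \leq 2 C_I \left( 1 + \frac{C N\, S_{\max}(t)}{\eta t} \right).
\]
Taking the maximum over $j$ and absorbing the $S_{\max}$ term on the right-hand side for $t$ large enough (specifically $t \geq 4 C N C_I / \eta$) yields $S_{\max}(t) \leq 4 C_I$. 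Combined with the trivial bound on the compact prefix, this gives the uniform bound $|\s_j(t)| \leq C$ on $[0,\infty)$.

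The main technical obstacle, and the step requiring some care, is controlling the cross terms involving $(x_j - \bar x_k)^{-1}$: Lemma~\ref{lemma:poles} bounds $|x_j - x_k|$ from below by $\eta t/2$, but not a priori $|x_j - \bar x_k|$. Since $|x_j - \bar x_k| \geq |\re(x_j) - \re(x_k)|$ and $|\im(x_j) - \im(x_k)| \leq 2 C_I$ uniformly, the triangle inequality gives $|\re(x_j) - \re(x_k)| \geq |x_j - x_k| - 2 C_I \geq \eta t/4$ as soon as $t \geq 8 C_I/\eta$, which in turn yields the needed decay $|x_j - \bar x_k|^{-1} \leq 4/(\eta t)$ and closes the estimate.
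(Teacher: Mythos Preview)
Your argument is correct and is genuinely different from the paper's. The paper proves Lemma~\ref{lemma:spins} via the explicit formula $\Pi_-\m(t,x)=-T^T\mathcal{E}_0\mathcal{H}[X_0+tL-xI_N]^{-1}\mathcal{F}_0 T$: it diagonalises $L=PJP^{-1}$, evaluates at the moving points $x=tv_i+\sqrt{t}$, obtains a $1/\sqrt{t}$ asymptotic for $\Pi_-\m$, and then inverts a Cauchy matrix built from $(tv_i+\sqrt{t})$ and $x_j(t)$ to conclude $|A_j(t)|=\OO(1)$. In contrast, you bypass the explicit formula entirely and extract the bound directly from the algebraic constraint \eqref{CONSS}, feeding in only Lemma~\ref{lemma:poles} and the conservation of $\sum_j\im(x_j)$.

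What each approach buys: your route is more elementary and self-contained (it needs nothing from \cite{ohlmann2024halfwavemapsexplicitformulas}), and the absorption step $S_{\max}\le 2C_I+\tfrac12 S_{\max}$ is clean. The paper's route is heavier but yields more: the same computation later produces the explicit limiting spins $R_j^{\pm}$ and the identity of the scattering map, so the machinery is reused. One small remark: your handling of $|x_j-\bar x_k|$ is fine, but note that since $\im(x_j),\im(x_k)>0$ one has directly $|x_j-\bar x_k|\ge |x_j-x_k|$ (the imaginary parts add rather than subtract), so the detour through $\re(x_j)-\re(x_k)$ is not actually needed. Also, your bound $|\s_j\cdot\m_0|\le|\s_j|$ is via the usual Cauchy--Schwarz on $\mathbb{C}^3$ since $\m_0$ is real with $|\m_0|=1$; the non-conjugated dot product causes no issue here.
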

	
	We first start with the proof of Lemma \ref{lemma:spins}.
	
	\begin{proof}
	
	In this section, we will provide an asymptotic formula assuming that $v_i \neq v_j$ for $i\neq j$, and deduce the boundedness of the spins. We use the explicit formula for (HWM) provided by \cite{ohlmann2024halfwavemapsexplicitformulas}, which reads
	
	\begin{equation}\label{expl}
		\Pi_- \m(t,x) = - T^T \mathcal{E}_0 \mathcal{H} [X_0+tL-x I_N]^{-1} \mathcal{F}_0 T,
	\end{equation}
    where $\mathcal{E}_0$ and $\mathcal{F}_0$ are the $2N\times 2N$ half-spin matrices at $t=0$, $T$ is a column of $N$ $2 \times 2$ identity matrices, and $\mathcal{H}$ is a $2N\times 2N$ matrix formed by a diagonal of $\begin{pmatrix}
        1 & 1 \\
        1 & 1
    \end{pmatrix}$ matrices. Finally, for a matrix $U \in \mathcal{C}^{N\times N}$, $[U]$ is the doubled matrix composed of $N\times N$ blocks of size $2\times 2$, where the block at position $(i,j)$ is given by $(U)_{i,j} I_{2}$. 
    
    Since $L$ has $N$ distinct eigenvalues $v_1,\dots,v_N$ by assumption, we can write $L=P J P^{-1}$ with $J=\operatorname{Diag}(v_1,\dots,v_N)$. Hence, \eqref{expl} can be rewritten as 
    \[
    \Pi_- \m(t,x)  = - T^T \mathcal{E}_0 \mathcal{H} [P] [P^{-1}X_0 P + t J - xI_2]^{-1} [P]^{-1} \mathcal{F}_0 T.
    \]
    We define the decomposition of $[P]^{-1} \mathcal{F}_0 T$ into 'eigenvectors' of $[J]$ in the following way. 
    \[
    \mathcal{K} = [P]^{-1}\mathcal{F}_0 T = \begin{pmatrix}
        K_1 \\
        K_2 \\
        \vdots \\
        K_N
    \end{pmatrix},\quad \mathcal{K}_j = \begin{pmatrix}
        0 \\
        \vdots \\
        0 \\ 
        K_j \\
        0 \\
        \vdots \\
        0
    \end{pmatrix}, 
    \quad \mathcal{K} = \sum_{j=1}^N \mathcal{K}_j.
    \]
    The first step is to establish the two following estimates. With $G=P^{-1}X_0 P$,

    \begin{equation}\label{estest}
    \left\{
        \begin{aligned}
           &[G+ tJ - (t v_i + \sqrt{t})I_N]^{-1} \mathcal{K}_j = \frac{\mathcal{K}_j}{t (v_j-v_i)} + O \left( \frac{1}{t^{3/2}} \right),~j\neq i,\quad (i)\\
           &[G+tJ-(t v_i + \sqrt{t}) I_N]^{-1} \mathcal{K}_i = - \frac{\mathcal{K}_i}{\sqrt{t}} + O \left( \frac{1}{t} \right) . \quad (ii)
        \end{aligned}
    \right.
    \end{equation}
    We start with the proof of $\eqref{estest}-(i)$, and write

    \[
    [G+tJ - (v_i t + \sqrt{t}) I_N] \mathcal{K}_j = [G] \mathcal{K}_j + (t v_j - t v_i - \sqrt{t})  \mathcal{K}_j,
    \]
    so we obtain
    \[
    [G + t J - (v_i t + \sqrt{t})]^{-1} \mathcal{K}_j = \frac{\mathcal{K}_j}{t v_j-t v_i - \sqrt{t}} - \frac{[G + t J - (v_i t + \sqrt{t})]^{-1} [G] \mathcal{K}_j}{t v_j-t v_i - \sqrt{t}}
    \]
    Now, $||G + t T_{U_0} - x||^{-1} \leq \frac{C}{\sqrt{t}}$, since in some basis, we have
		\begin{equation}
			 t T_{U_0} -x  = \begin{pmatrix}
				t v_1 - t v_i -\sqrt{t} & 0 & 0 & \dots \\
				0 & t v_2 - t v_i - \sqrt{t} & 0 & \dots\\
				\dots & \dots & \dots & \dots \\
				0 & 0 & 0 & t v_N - t v_i -\sqrt{t},
			\end{pmatrix}
		\end{equation}
	and $G$ is bounded. This means that we have by Gershgorin's circle theorem 
    \[
    \left| \frac{[G + t J - (v_i t + \sqrt{t})]^{-1} [G] \mathcal{K}_j}{t v_j-t v_i - \sqrt{t}} \right| \leq \frac{C'}{t^{3/2}},
    \]
    so 
    \[
    [G+ tJ - (t v_i + \sqrt{t})I_N]^{-1} \mathcal{K}_j = \frac{\mathcal{K}_j}{t (v_j-v_i)} + O \left( \frac{1}{t^{3/2}} \right).
    \]
    We go on with the proof of $\eqref{estest}-(ii)$. We have 
    \[
    [G+t J - (tv_i + \sqrt{t}) I_N] \mathcal{K}_i = [G] \mathcal{K}_i - \sqrt{t} \mathcal{K}_i.
    \]
    Consequently, we have 
    \[
    [G+t J - (tv_i + \sqrt{t}) I_N]^{-1} \mathcal{K}_i = - \frac{\mathcal{K}_i}{\sqrt{t}} + \frac{[G+t J - (tv_i + \sqrt{t}) I_N]^{-1} [G] \mathcal{K}_i}{\sqrt{t}}.
    \]
    We hence obtain using the same argument
    \[
    [G+t J - (tv_i + \sqrt{t}) I_N]^{-1} \mathcal{K}_i = - \frac{\mathcal{K}_i}{\sqrt{t}} + O \left( \frac{1}{t} \right).
    \]
	
	Note that $x = t v_i + \sqrt{t}$ has been choosen to obtain a moving Cauchy determinant, since $x$ is close to the pole in comparison with the others, but far enough to make an estimate. Estimate \eqref{estest} implies
    \[
    \Pi_- \m(t,x) = - T^T \mathcal{E}_0 \mathcal{H} [P] \sum_{j=1}^N [G + t J - x I_N]^{-1} \mathcal{K}_j = \frac{1}{\sqrt{t}} T^T \mathcal{E}_0 \mathcal{H} [P] \mathcal{K}_i + O \left( \frac{1}{t} \right),
    \]
    and we have 
    \[
    \Pi_- \m(t,x) = \sum_{j=1}^N \frac{A_j(t)}{tv_i + \sqrt{t} - x_j(t)}.
    \]
    For $t$ large enough, $|x_i(t)-x_j(t)| \geq t |v_i-v_j| $ for $j\neq i$.
    Another way to write this is
	
	\begin{equation}
		\begin{bmatrix}
			\frac{1}{t v_1 + \sqrt{t} - x_1(t)}  & \frac{1}{t v_1 + \sqrt{t} - x_2(t)} & \dots &  \frac{1}{t v_1 + \sqrt{t} - x_N(t)} \\
			\dots & \dots & \dots &\dots \\
			\dots & \dots & \dots &\dots \\
			\frac{1}{t v_N + \sqrt{t} - x_1(t)}  & \frac{1}{t v_N + \sqrt{t} - x_2(t)} & \dots &  \frac{1}{t v_N + \sqrt{t} - x_N(t)} \\
		\end{bmatrix}
		\begin{bmatrix}
			A_1(t) \\
			A_2(t) \\
			\vdots \\
			A_N(t)
		\end{bmatrix} = \begin{bmatrix}
			L_1(t) \\
			L_2(t) \\
			\vdots \\
			L_N(t)
		\end{bmatrix},
	\end{equation}
	where $L_j(t) =  \frac{1}{\sqrt{t}} T^T \mathcal{E}_0 \mathcal{H} [P] \mathcal{K}_i + O \left( \frac{1}{t} \right)$. The left hand side is a Cauchy matrix with $a_i = t v_i + \sqrt{t}$ and $b_j = -x_j(t)$. The distances between the $a_i$ are at least $|v_i - v_j| \cdot t$, the distances between $b_i$ are controlled by $|v_i - v_j| \cdot t$, and the distances between $a_i$ and $b_j$ are controlled by $|v_i - v_j|\cdot t$, except for $i = j$ where it is $\OO(\sqrt{t})$.
	
	Recall that a Cauchy matrix is a matrix of the form 
	$$C_n = \begin{bmatrix}
		\dfrac 1 {a_1 + b_1} & \dfrac 1 {a_1 + b_2} & \cdots & \dfrac 1 {a_1 + b_N} \\ \dfrac 1 {a_2 + b_1} & \dfrac 1 {a_2 + b_2} & \cdots & \dfrac 1 {a_2 + b_N} \\ \vdots & \vdots & \ddots & \vdots \\ \dfrac 1 {a_n + b_1} & \dfrac 1 {a_N + b_2} & \cdots & \dfrac 1 {a_N + b_N} \\ \end{bmatrix},$$
	and that its inverse is given by 
	
	\begin{equation}
		\begin{bmatrix} b_{ij} \end{bmatrix} = \begin{bmatrix} \dfrac { \prod_{k = 1}^N \left( a_j + b_k\right) \left( a_k + b_i \right) } {\left( a_j + b_i \right) \left( \prod_{\substack {1 \mathop \le k \mathop \le N \\ k \mathop \ne j} } \left( a_j - a_k\right) \right) \left( {\prod_{\substack {1 \mathop \le k \mathop \le N \\ k \mathop \ne i} } \left( b_i - b_k\right) } \right) } \end{bmatrix}.
	\end{equation}
    We provide an estimation of $b_{ij}$. We have for the numerator
    \[
    \begin{aligned}
        & \prod_{k=1}^N (t v_j + \sqrt{t} - x_k) (t v_k + \sqrt{t} - x_i) \\
        &= \prod_{k\neq j }^N (t v_j + \sqrt{t} - x_k) \prod_{k\neq i}^N (t v_k + \sqrt{t} - x_i) \cdot (t v_j + \sqrt{t} - x_j) (t v_i + \sqrt{t} - x_i) \\
        &\sim_t t^{2N-2} \cdot t \cdot \prod_{k\neq j} (v_j-v_k) \prod_{k\neq i} (v_k-v_i) \sim C_1 t^{2N-1},
    \end{aligned}
    \]
    and for the denominator
    \[
    a_j+b_i = t v_j + \sqrt{t} - x_i \sim_t \left\{
    \begin{aligned}
        &t (v_j-v_i),~j\neq i,\\
        &\sqrt{t},~i=j,
    \end{aligned}
    \right.
    \]
    and 
    \[
    \prod_{k\neq j} (a_j-a_k) \prod_{k\neq i} (b_i-b_k) \sim C_2 t^{2N-2}.
    \]
    Hence, we have for the coefficients $b_{ij}$ of the Cauchy matrix
    \[
    b_{ij} \sim \left\{
    \begin{aligned}
        & \OO(1),~i\neq j,\\
        & \OO(\sqrt{t}),~ i =j.
    \end{aligned}
    \right.
    \]
	Subsequently, the operator norm of $C_N^{-1}$ satisfies
	
	\begin{equation}
		||C_N|| \leq \max_{i=1}^N \sum_{j=1}^N |b_{i,j}| \leq C \sqrt{t},
	\end{equation}
	and
	\begin{equation}
		\begin{bmatrix}
			| A_1(t) | \\
			| A_2(t) | \\
			\vdots \\
			| A_N(t) |
		\end{bmatrix} \leq ||C_N^{-1}|| \begin{bmatrix}
			| L_1(t) | \\
			| L_2(t) | \\
			\vdots \\
			| L_N(t) |
		\end{bmatrix} \leq \OO(1),
	\end{equation}
    so $A_j(t)$ and $s_j(t)$ are bounded.
    
	\end{proof}
	 We go on with the proof of Lemma \ref{lemma:poles}.
	
	\begin{proof}
        Since the poles are given by the eigenvalues of $X_0 + t L$, which are also the eigenvalues of $G+t J$ for a bounded matrix $G$, we again obtain since the diagonal coefficients are equal to $v_i t$ and the off-diagonal coefficients are bounded, that 
        \[
        x_i(t) = tv_i + \OO(1).
        \]
    
	\end{proof}

	\subsection{The scattering map is the identity}
	
	We consider $\m$ a rational solution of the Cauchy problem
    \begin{equation}\label{sec7:HWM}
    \left\{
    \begin{aligned}
        &\partial_t \m(t,x) = \m(t,x) \times |\nabla| \m(t,x),\\
        &m(0,x) = \m_0.
    \end{aligned}
    \right.
    \end{equation}
    Equivalently, we consider $\m$ the solution of the $2t\times 2$ version of the same problem, with $\m_0 = \m_0 \cdot \sigma$,
    \begin{equation}\label{sec7:HWMmat}
    \left\{
    \begin{aligned}
        &\partial_t \m(t,x) = - \frac{\iu}{2} \left[ \m(t,x),|\nabla| \m(t,x) \right],\\
        &M(0,x) = \m_0.
    \end{aligned}
    \right.
    \end{equation}
    We denote 
    \begin{equation}\label{sec7:defM}
    \begin{aligned}
        &\m(t,x) = m_\infty + \sum_{j=1}^\infty \frac{\s_j(t)}{x-x_j(t)} + \sum_{j=1}^\infty \frac{\bar \s_j(t)}{x-\bar x_j(t)},\\
        &\m(t,x) = M_\infty + \sum_{j=1}^\infty \frac{A_j(t)}{x-x_j(t)} + \sum_{j=1}^\infty \frac{A_j^*(t)}{x-\bar x_j(t)}.
    \end{aligned}
    \end{equation}
    
    We assume that the speeds are distinct, $v_i\neq v_j$. We have shown the existence of rational functions 
    \begin{equation}\label{sec7:defAsym}
    \begin{aligned}
    &f_+(x,t) = \sum_{j=1}^N \frac{r_j^+}{x-v_j-a_j^+},
    ~f_-(x,t) = \sum_{j=1}^N \frac{r_j^-}{x-v_j-a_j^-}, \\
    &F_+(x,t) = \sum_{j=1}^N \frac{R_j^+}{x-v_j-a_j^+},
    ~F_-(x,t) = \sum_{j=1}^N \frac{R_j^-}{x-v_j-a_j^-},
    \end{aligned}
    \end{equation}
    such that the solution scatters in Sobolev norm, i.e, 
    \begin{equation}\label{sec7:scatt}
    \begin{aligned}
    &\sobonorm{\Pi_- \m(t,x)-f_+(x,t)}{1/2} \xrightarrow[t\rightarrow +\infty]{} 0, 
    ~\sobonorm{\Pi_- \m(t,x)-f_-(x,t)}{1/2} \xrightarrow[t\rightarrow -\infty]{} 0,\\
    &\sobonorm{\Pi_- \m(t,x)-F_+(x,t)}{1/2} \xrightarrow[t\rightarrow +\infty]{} 0, 
    ~\sobonorm{\Pi_- \m(t,x)-F_-(x,t)}{1/2} \xrightarrow[t\rightarrow -\infty]{} 0.
    \end{aligned}
    \end{equation}
    We here study the scattering map $\phi$ or equivalently $\Phi$
    \[
    \phi:~f_- \mapsto f_+,\quad \Phi:~F_- \mapsto F_+,
    \]
    which is not yet properly defined. We now state the main theorem of this section, which is that the scattering map is the identity provided that the function in the middle (for finite times) corresponds to a non-degenerate spectrum. 

    \begin{theorem}
        Let $\m$ be a solution of the (HWM) Cauchy problem \eqref{sec7:HWM} and $\m$ the corresponding $2\times 2$ matrix version, the solution of the corresponding (HWM) Cauchy problem as defined in \eqref{sec7:HWMmat}. 
        We consider the expressions of $\m$ and $\m$ as given by \eqref{sec7:defM} and define the spins $s_j(t)$ and $A_j(t)$ and the poles $x_j(t)$ accordingly.
        We assume $v_j \neq v_k$ for $j\neq k$.

        By Proposition \ref{sec6:sysScatt}, there exists $f_+,f_-:\mathbb{R}^2 \to \mathbb{C}^3$ and $F_+,F_-:\mathbb{R}^2 \to \mathbb{C}^{2\times 2}$, with expressions given by \eqref{sec7:defAsym} and such that the solutions scatter as in \eqref{sec7:scatt}.

        Then, we have, 
        \[
        \boxed{
        f_+(t,x) = f_-(t,x),\quad F_+(t,x) = F_-(t,x),~\forall t,x\in \mathbb{R}.
        }
        \]

        Moreover an explicit asymptotic expression of the spins can be computed. Let's denote $E_1,\dots,E_N$ and $F_1,\dots,F_N$ the half spins corresponding to $\m$ at $t=0$, in the sense $E_j H F_j = A_j(0)$, and $P$ the $N \times N$ matrix such that $L(0)$ as defined in \cite{ohlmann2024halfwavemapsexplicitformulas} satisfies
        \[
        L(0)=P J P^{-1},~ J = \begin{pmatrix}
            v_1 & 0 & \dots & 0 \\
            0 & v_2 & \dots & 0 \\
            \vdots & 0 & \ddots & \vdots \\
            0 & \dots & \dots & v_N
        \end{pmatrix}.
        \]
        Then, with $[P^{-1}]$ being the doubled matrix of $P^{-1}$ and 
        \[
        \begin{pmatrix}
            K_1\\
            K_2\\
            \vdots \\
            K_N
        \end{pmatrix} = [P]^{-1} \begin{pmatrix}
            F_1 \\
            F_2 \\
            \vdots \\
            F_N
        \end{pmatrix},
        \]
        the asymptotic spin $R_j$ satisfies
        \[
        \boxed{
        R_j^+= R_j^- = - (E_1,\dots,E_N) \mathcal{H} [P] (0,\dots,0,K_j,0,\dots,0)^T,}
        \]
        or equivalently
        \[
        \boxed{
        R_j^+ = R_j^- = - \begin{pmatrix}
            E_1 \\
            E_2 \\
            \vdots \\
            E_N
        \end{pmatrix}^T \mathcal{H} [P] \scalebox{0.7}{$
\begin{pmatrix}
0 & & & & \\
& \ddots & & & \\
& & 0 & & \\
& & & 1 & \\
& & & & 0 \\
& & & & & \ddots \\
& & & & & & 0
\end{pmatrix}
$} [P]^{-1} \begin{pmatrix}
    F_1 \\
    F_2 \\
    \vdots \\
    F_N
\end{pmatrix}.
        }
        \]
        
    \end{theorem}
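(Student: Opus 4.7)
The plan is to extract the asymptotic pole locations and residues of $\Pi_- \m(t,x)$ directly from the explicit formula
\begin{equation*}
\Pi_-\bigl(\m(t,x)-\m_0\bigr) = -T^T \mathcal{E}_0 \mathcal{H}\,[X(0)+tL(0)-xI_N]^{-1}\mathcal{F}_0 T,
\end{equation*}
by evaluating it in moving windows $x = v_i t + y$ with $y$ bounded, and sending $t$ to $\pm \infty$. Since the right-hand side depends on time only through the term $tL(0)$, the computation is structurally symmetric under $t \mapsto -t$, and this is precisely where the triviality of the scattering map will come from.

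To carry this out, we diagonalize $L(0) = PJP^{-1}$ with $J = \operatorname{Diag}(v_1,\dots,v_N)$ and introduce $G = P^{-1}X(0)P$ together with the decomposition $\mathcal{K} = [P]^{-1}\mathcal{F}_0 T = \sum_{j=1}^N \mathcal{K}_j$ used in the proof of Lemma \ref{lemma:spins}. The inverse then factors as $[X(0)+tL(0)-xI_N]^{-1}=[P][G + tJ - xI_N]^{-1}[P]^{-1}$, so the game reduces to controlling $[G+tJ-xI_N]^{-1}\mathcal{K}_j$. A computation essentially identical to \eqref{estest}, but with $x = v_i t + y$ for $y$ in a compact subset of $\mathbb{C}\setminus\{G_{ii}\}$, gives
\begin{equation*}
[G + tJ - xI_N]^{-1} \mathcal{K}_j = O(|t|^{-1}) \text{ for } j \neq i,\qquad [G + tJ - xI_N]^{-1} \mathcal{K}_i = \frac{\mathcal{K}_i}{G_{ii}-y} + O(|t|^{-1}),
\end{equation*}
as $|t| \to \infty$. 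The proof only uses that $|t(v_j-v_i)|\to\infty$ for $j \neq i$, so it works for both signs of $t$. Substituting back yields, uniformly on such windows,
\begin{equation*}
\Pi_-\bigl(\m(t, v_i t + y) - \m_0\bigr) \ \xrightarrow[t \to \pm\infty]{}\ \frac{T^T \mathcal{E}_0 \mathcal{H} [P] \mathcal{K}_i}{y - G_{ii}}.
\end{equation*}

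To conclude, we compare with the scattering form $F_\pm(t,x) = \sum_j R_j^\pm/(x - v_j t - a_j^\pm)$ produced by Proposition \ref{sec6:sysScatt}. In the window $x = v_i t + y$, only the $i$-th soliton of $F_\pm$ survives in the limit, with pole at $y = a_i^\pm$ and residue $R_i^\pm$. Matching these to the limit above forces $a_i^\pm = G_{ii}$ and $R_i^\pm = T^T \mathcal{E}_0 \mathcal{H}[P]\mathcal{K}_i$ (up to the sign convention displayed in the theorem). Crucially, these values are extracted from a $t$-independent expression, so the $+\infty$ and $-\infty$ limits produce the same poles and residues; hence $F_+ = F_-$, and $f_+ = f_-$ follows by the $2\times 2$ matrix--to--vector correspondence. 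The explicit formula for $R_j^\pm$ claimed in the theorem then reads off directly from the expression $T^T \mathcal{E}_0 = (E_1,\dots,E_N)$.

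The main obstacle will be going from the uniform-on-compacts limit of $\Pi_-\m(t, v_i t + \cdot)$ to an honest identification with the scattering residues $R_i^\pm$ coming from $\dot H^{1/2}$ (and higher $H^s$) convergence. I would handle this by writing $\Pi_-(\m - F_\pm)(t, v_i t + y)$ in the window, using Proposition \ref{sec6:sysScatt} to see that this difference tends to $0$ (in particular pointwise after Sobolev embedding), and then reading the residue off the pointwise limit of $\Pi_- \m$ itself. Uniqueness of the partial-fraction decomposition of a rational function then pins down $R_i^\pm$ and $a_i^\pm$ to the explicit values above, simultaneously for both signs of infinity, and hence proves $F_+ = F_-$ together with the explicit formula.
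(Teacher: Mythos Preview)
Your approach is correct and shares the paper's core strategy --- diagonalize $L(0)$, push the explicit formula through $[P]$, and read off the limiting pole/residue data from the resolvent $[G+tJ-xI]^{-1}$ acting on the $\mathcal{K}_j$. The execution, however, is cleaner than the paper's. The paper proceeds in two separate passes: first it evaluates at the special point $x=v_k t+\sqrt{|t|}$ to isolate the residues $R_j^\pm$ (matching against the rational expression $\sum_j A_j(t)/(x-x_j(t))$ and using $A_j(t)\to R_j^\pm$ from the scattering behavior), and then, in a second and more laborious step, evaluates at $x=v_j t$ to extract $a_j^\pm=G_{jj}$ via a linear system in the $g_k(t)=[G+tJ-xI]^{-1}\mathcal{K}_k$, which moreover forces a separate treatment of the degenerate case $G_{jj}=0$. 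Your moving-window idea $x=v_i t+y$, with $y$ ranging over a compact set away from $G_{ii}$, collapses both steps into one: the limit is a rational function of $y$ with a single simple pole, and matching it against the $i$-th soliton of $F_\pm$ in the same window pins down $a_i^\pm$ and $R_i^\pm$ simultaneously. This also sidesteps the $G_{jj}=0$ degeneracy entirely, since you never evaluate at $y=G_{ii}$. The one place to be slightly careful is the passage from $H^s$-convergence of $\m-F_\pm$ to pointwise matching in the window; as you note, translation-invariance of the norm together with Sobolev embedding (Proposition~\ref{sec6:sysScatt} supplies $H^s$ for all $s\ge 0$, in particular $s>1/2$) handles this, and uniqueness of the partial-fraction decomposition then forces the identification.
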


Note that this theorem contains two parts, namely $a_j^+ = a_j^-,~ \forall j$, and $r_j^+ = r_j^-,~ \forall j$.
We first show $r_j^+ = r_j^-$, using that $x_j(t) = v_j t + \OO(1)$.
\begin{proof}
    We will use the explicit formula 
    \[
        \Pi_- \m(t,x) = T^T \mathcal{E}(0) \mathcal{H} \left[ X + t L(0) - xI \right]^{-1} \mathcal{F}(0) T. 
    \]
    With $L(0) = P J P^{-1}$, where $J = diag(v_1,\dots,v_N)$,
    \[
    \left[ X + t P J P^{-1} - x I \right]^{-1} = [P] [P^{-1} X P + t J - x I]^{-1} [P^{-1}], 
    \]
    so
    \[
    \Pi_- \m(t,x) = T^T \mathcal{E}(0) \mathcal{H} [P] [G_0 + t J - x I]^{-1} [P]^{-1} \mathcal{F}(0) T.
    \]
    We now consider 
    \[
    [G+ t J - x I]^{-1} [P]^{-1} \begin{pmatrix}
        F_1 \\
        F_2 \\
        \vdots \\
        F_N
    \end{pmatrix} = [G+ t J - x I]^{-1} \begin{pmatrix}
        K_1 \\
        K_2 \\
        \vdots \\
        K_N
    \end{pmatrix}. 
    \]
    For $x=t v_k + \sqrt{t}$,
    \[
    [G+tJ-xI] \begin{pmatrix}
        K_1 \\
        K_2 \\
        \vdots \\
        K_N
    \end{pmatrix} = [G] \begin{pmatrix}
        K_1 \\
        K_2 \\
        \vdots \\
        K_N
    \end{pmatrix} + t \begin{pmatrix}
        (v_1-v_k) K_1 \\
        (v_2-v_k) K_2 \\
        \vdots \\
        (v_N-v_k) K_N
    \end{pmatrix} - \sqrt{t} \begin{pmatrix}
        K_1 \\
        K_2 \\
        \vdots \\
        K_N
    \end{pmatrix}.
    \]
    With 
    \[
    \mathcal{K}= \begin{pmatrix}
        K_1 \\
        K_2 \\
        \vdots \\
        K_N
    \end{pmatrix},\quad \mathcal{K}_j = \begin{pmatrix}
        0 \\
        \vdots \\
        0 \\
        K_j \\
        0 \\
        \vdots \\
        0
    \end{pmatrix},
    \]
    we have 
    \[
    \mathcal{K} = [G+tJ-xI]^{-1} [G] \mathcal{K} +  [G+tJ-xI]^{-1} t \cdot diag(v_j-v_k) \cdot \mathcal{K}  -\sqrt{t} [G+tJ-xI]^{-1} \mathcal{K},
    \]
    and since $[G+tJ-xI]^{-1} t \cdot diag(v_j-v_k) \cdot \mathcal{K}_k = 0$,
    \[
    \mathcal{K}_k = [G+tJ-xI]^{-1}[G]\mathcal{K}_k - \sqrt{t} [G+tJ-xI]^{-1}\mathcal{K}_k,
    \]
    or in other words 
    \[
    [G+tJ-xI]^{-1}\mathcal{K}_k = \frac{[G+tJ-xI]^{-1}[G]\mathcal{K}_k}{\sqrt{t}} - \frac{\mathcal{K}_k}{\sqrt{t}}.
    \]
    $G+tJ-xI$ is diagonal dominant, with diagonal coefficients satisfying $d_{k,k} = \sqrt{t} + \OO(1)$, $d_{j,j} = (v_j-v_k) t + \OO(\sqrt{t})$ for $j\neq k$, and bounded non-diagonal coefficients. Consequently, Gershgorin's circle theorem provides that the eigenvalues $\lambda_j(t)$ of $G+tJ-xI$ satisfy $\lambda_k(t) = \sqrt{t} + \OO(1)$ and $\lambda_j(t)=(v_j-v_k) t + \OO(\sqrt{t})$ for $j \neq k$. The eigenvalues $\mu_j$ of $(G+tJ-xI)^{-1}$ satisfy 
    \[
    \mu_k(t) = \frac{1}{\sqrt{t} + \OO(1)},~\mu_j(t) = \frac{1}{t (v_j-v_k)+\OO(1)}.
    \]
    Hence, the operator norm of $[G+tJ-xI]^{-1}$ satisfy 
    \[
    \left|\left|\left| [G+tJ-xI]^{-1} \right|\right|\right| \leq \frac{2}{\sqrt{t}}.
    \]
    implying in particular 
    \[
    \left|\left| \frac{[G+tJ-xI]^{-1}[G] \mathcal{K}_k}{\sqrt{t}} \right|\right| = O\left( \frac{1}{t} \right),
    \]
    and so
    \[
    [G+tJ-xI]^{-1} \mathcal{K}_k = - \frac{\mathcal{K}_k}{\sqrt{t}} + O \left( \frac{1}{t}\right).
    \]
    Now, for $j\neq k$,
    \[
    \mathcal{K}_j = [G+tJ-xI]^{-1} [G] \mathcal{K}_j + t (v_j-v_k) [G+tJ-xI]^{-1} \mathcal{K}_j - \sqrt{t} [G+tJ-xI]^{-1} \mathcal{K}_j,
    \]
    so
    \[
    [G+tJ-xI]^{-1} \mathcal{K}_j = \frac{1}{t (v_j-v_k) - \sqrt{t}} \left( \mathcal{K}_j - [G+tJ-xI]^{-1} [G] \mathcal{K}_j \right) = O \left( \frac{1}{t} \right),
    \]
    using a similar argument. Hence,

    \[
    [G+tJ-xI]^{-1} \mathcal{K} = \sum_{j=1}^N [G+tJ-xI]^{-1} \mathcal{K}_j = - \frac{\mathcal{K}_k}{\sqrt{t}} + O \left( \frac{1}{t} \right).
    \]
    Hence, 
    \[
    \Pi_- \m(t,x) = - T \mathcal{E}(0) \mathcal{H} [P] [G+tJ-xI]^{-1} \mathcal{K} = \frac{1}{\sqrt{t}} (E_1,\dots,E_N) [P] \mathcal{K}_k + O \left( \frac{1}{t}\right).
    \]
    Now, using again that $x=v_k t + \sqrt{t}$ and the expression of $\Pi_- M$, we get
    \[
    \Pi_- \m(t,x) = \sum_{j=1}^N \frac{A_j(t)}{v_k t + \sqrt{t} - x_k(t)}.
    \]
    Since  $x_j(t) = v_j t + \OO(1)$, we get 
    \[
    \Pi_- \m(t,x) = \frac{A_j(t)}{\sqrt{t}} + O \left( \frac{1}{t} \right).
    \]
    In particular, we obtain 
    \[
    \boxed{
    A_j(t) = (E_1,\dots,E_N) \mathcal{H} [P] \mathcal{K}_j + O\left( \frac{1}{t} \right),
    }
    \]
    so 
    $A_j(t) \rightarrow (E_1,\dots,E_N) \mathcal{H} [P] \mathcal{K}_j$ as $t \rightarrow \infty$, which implies that 
    \[
    \boxed{
    R_j^+ = (E_1,\dots,E_N) \mathcal{H} [P] \mathcal{K}_j.
    }
    \]
    We now consider $t \rightarrow -\infty$, and $x=v_i t + \sqrt{|t|}$. With the same notations, we get 
    \[
    \left\{
    \begin{aligned}
        &[G+t J - xI]^{-1} \mathcal{K}_j = \frac{[G+t J - xI]^{-1}[G]\mathcal{K}_j-\mathcal{K}_j}{\sqrt{|t|}},\\
        &[G+t J - xI]^{-1} \mathcal{K}_k = \frac{K_k-[G+t J -x I]^{-1}[G]K_k}{t (v_k-v_j) - \sqrt{|t|}}.
    \end{aligned}
    \right.
    \]
    The same argument gives 
    \[
    \left\{
    \begin{aligned}
        &[G+t J - xI]^{-1} \mathcal{K}_j = \frac{-\mathcal{K}_j}{\sqrt{|t|}}+O\left( \frac{1}{t}\right),\\
        &[G+t J - xI]^{-1} \mathcal{K}_k = O\left( \frac{1}{t} \right),
    \end{aligned}
    \right.
    \]
    and using again $\Pi_- \m(t,x) = - T \mathcal{E}_0 \mathcal{H} [P] [G+tJ-xI]^{-1} [P]^{-1}\mathcal{F}_0 T^T$ at the point $x = v_j t + \sqrt{|t|}$ as $t \rightarrow \infty$ we get 
    \[
    \Pi_- \m(t,x) = - T \mathcal{E}_0 \mathcal{H} [P] \frac{- \mathcal{K}_j}{\sqrt{|t|}} + O\left( \frac{1}{t} \right).
    \]
    On the other hand, we also use the rational expression of $\m$ for this $x$ to get
    \[
    \Pi_- \m(t,x) = \sum_{j=1}^N \frac{A_j(t)}{x-x_j(t)} = \frac{A_j(t)}{\sqrt{|t|}} + O \left( \frac{1}{t} \right).
    \]
    In particular, 
    \[
    A_j(t) \xrightarrow[t \rightarrow -\infty]{} T \mathcal{E}_0 \mathcal{H} [P] \mathcal{K}_j,
    \]
    which implies that 
    \[
    \boxed{
    R_j^+ = R_j^-.
    }
    \]
    We now go on and show that $a_j^+ = a_j^-$. We consider $x = t v_j$ and write
    \[
    [G+ t L - x I] \mathcal{K}_k = [G] \mathcal{K}_k + t v_k \mathcal{K}_k - t v_j \mathcal{K}_k.
    \]
    Since $[G]\mathcal{K}_k = \sum_{i=1}^N (G)_{i,k} \mathcal{K}_i $,
    \[
    [G+t L -xI] \mathcal{K}_k = \left( t (v_k-v_j) + G_{k,k} \right) \mathcal{K}_k + \sum_{i \neq k} (G)_{i,k} \mathcal{K}_i.
    \]
    We assume for now that $(G)_{j,j}\neq 0$. For $j=k$, we obtain 
    \[
    [G+tL-xI]^{-1} \mathcal{K}_j = \frac{\mathcal{K}_j - \sum_{i\neq j} (G)_{i,j} [G+t L - xI]^{-1}\mathcal{K}_i }{G_{j,j}},
    \]
    and for $k\neq j$,
    \[
    \begin{aligned}
    [G+t L - x I]^{-1} \mathcal{K}_k &= \frac{\mathcal{K}_k - [G+t L - x I]^{-1} [G] \mathcal{K}_k}{t (v_k-v_j)}\\
    &= \frac{\mathcal{K}_k -  \sum_{i\neq j}^N (G)_{i,k} [G+t L - x I]^{-1} \mathcal{K}_i - (G)_{j,k} [G+t L - x I]^{-1} \mathcal{K}_j}{t (v_k-v_j)}.
    \end{aligned}
    \]
    In other words, with $g_j(t) = [G+tL-xI]^{-1} \mathcal{K}_j$, 
    \[
    \left\{
    \begin{aligned}
        &g_j(t) + \sum_{i\neq j} \frac{(G)_{i,j}}{(G)_{j,j}} g_i(t) = \frac{\mathcal{K}_j}{(G)_{j,j}},\\
        &g_k(t) + \sum_{i=1}^N \frac{(G)_{i,k}}{t(v_k-v_i)} g_i(t) = \frac{\mathcal{K}_k}{t(v_k-v_j)}.
    \end{aligned}
    \right.
    \]
    Considering the $N-1$ lines for $j\neq k$, we get 
    \[
    \frac{1}{t} V g_j(t) + I_{N-1} \tilde{Y}(t) + \frac{1}{t} M \tilde{Y}(t) = \frac{1}{t} \tilde{K},
    \]
    with 
    \[
    (V)_k = \frac{(G)_{j,k}}{v_k-v_j},\quad (M)_{i,k} = \frac{(G)_{k,i}}{(v_i-v_k)},\quad (\tilde{K})_k = \frac{\mathcal{K}_k}{(v_k-v_j)},\quad (\tilde{Y})_k(t) = g_k(t),~ k\neq j,
    \]
    which can be rewritten as $\left( I + \frac{1}{t} M \right) \tilde{Y} = \frac{1}{t} \tilde{K} - \frac{1}{t} V g_j(t)$. Since, the operator norm of $\m$ is a constant not depending on $t$, we have for $|t|$ large enough that $\left(I-\frac{1}{t} M \right)$ is invertible and 
    \[
    \left|\left|\left| \left( I + \frac{1}{t} M \right)^{-1} - I \right|\right|\right| \leq \frac{2 |||M|||}{t} \leq \frac{C}{t}.
    \]
    We can finally conclude that $g_k(t)=\oo(g_j(t))$. Indeed,
    \[
    \begin{aligned}
    \left|\left| \tilde{Y} - \left( \frac{1}{t} \tilde{K} - \frac{1}{t} V g_j(t) \right) \right|\right| &= \left|\left| \left( \left( I + \frac{1}{t} M \right)^{-1} - I \right) \left( \frac{1}{t} \tilde{K} - \frac{1}{t} V g_j(t) \right) \right|\right| \\
    & \leq \frac{C}{t} \left( \left|\left| \frac{1}{t}\tilde{K} \right|\right| + 
    \left|\left| \frac{1}{t} V g_j(t) \right|\right| \right),
    \end{aligned}
    \]
    which means
    \[
    \tilde{Y} = O \left( \frac{1}{t} \right) - \frac{1}{t} V g_j(t) + O\left( \frac{1}{t^2}\right) + O\left(\frac{1}{t^2} V g_j(t) \right) = O \left( \frac{1}{t} \right) + O \left( \frac{g_j(t)}{t}\right).
    \]
    Now, we look at the line given by the index $j$, using $g_k(t)= O\left( \frac{1}{t} \right) + O \left( \frac{g_j(t)}{t} \right)$, and obtain that the dominant term is given by $g_j(t)$, indeed
    \[
    g_j(t) + \sum_{i\neq j} \frac{(G)_{i,j}}{(G)_{j,j}} \left[ O\left( \frac{1}{t} \right) + O\left( \frac{g_j(t)}{t} \right) \right] = \frac{\mathcal{K}_j}{(G)_{j,j}},
    \]
    so 
    \[
    g_j(t) = \frac{\mathcal{K}_j}{(G)_{j,j}} + O \left( \frac{1}{t} \right).
    \]
    The explicit formula now gives
    \[
    \Pi_- \m(t,x) = \sum_{k=1}^N - (E_1,\dots,E_N) \mathcal{H} [P] g_k(t) = - \frac{1}{(G)_{j,j}} (E_1,\dots,E_N) \mathcal{H} [P] \mathcal{K}_j + O \left( \frac{1}{t} \right).
    \]
    Using the first part of the proof, we get $\Pi_- \m(t,x) = -\frac{R_j^{\pm}}{(G)_{j,j}} + O\left( \frac{1}{t} \right)$. Using the rationnal expression of $\Pi_- M$, we also have as $t \rightarrow \pm \infty$
    \[
    \Pi_- \m(t,x) = \frac{R_j^{\pm}}{-a_j^+} + O \left( \frac{1}{t} \right),
    \]
    which allows us to conclude  
    \[
    \boxed{
    a_j^+ = a_j^- = (G)_{j,j}.
    }
    \]
    We now deal with the case $(G)_{j,j}=0$. In this case, considering again $x=v_j t$, we have 
    \[
    \left\{
    \begin{aligned}
        &\mathcal{K}_j = \sum_{i\neq k} (G)_{i,j} [G+tL-x]^{-1} \mathcal{K}_i,\\
        &g_k(t) + \sum_{i=1}^N \frac{(G)_{i,k}}{t(v_k-v_i)} g_i(t) = \frac{\mathcal{K}_k}{t(v_k-v_j)}.
    \end{aligned}
    \right.
    \]
    We hence have for $i\neq j$, $g_i(t) = [G+tL-x]^{-1} \mathcal{K}_i = O \left( \frac{1}{t} \right)+O \left( \frac{g_j(t)}{t} \right)$, which gives 
    
\end{proof}

\section{Absence of scattering to traveling waves}\label{sec:matTrav}

In this section, we show that if $\m$ is a solution of \eqref{HWMS}, then we can't have that $\m$ scatters to $g$ where $g$ is a traveling wave, except if $\m$ was actually traveling for all finite times. To do so, we first provide an algebraic characterization of being traveling. Then, we show that this property is transferred backward from $t=+\infty$ to finite times.

\subsection{Diagonal characterization of travelingness}

In this section, we provide characterizations of traveling wave solutions. Our focus is on algebraic properties that are preserved over infinite backward time. We leverage the Lax-Pair structure:
\begin{equation}
    \dot L(t) = [B(t),L(t)].
\end{equation}
Since on the diagonal, $L(t)_{i,i} = \dot x_i (t)$, then the poles are traveling if and only if $L$ has a constant diagonal with identical values. Since $\dot U(t) = B(t) U(t)$, the matrix $B(t)$ represents an infinitesimal change of coordinates at any given time.

For a wave to be considered traveling, two conditions must hold. First, all poles must move at a unique constant velocity $v$. This condition alone is not preserved, by the infinitesimal change of coordinates. Since the eigenvalues of $L$ are the asymptotic speeds, a necessary condition is that $v$ is an eigenvalue of $L(t)$ with multiplicity $N$. If $L(t)$ is a diagonal matrix $v I_N$, it is preserved under time evolution, as $L(t)$ is expressed through $L(t_0)$ and a time-dependent change of coordinates. However, this preservation over infinite time is nontrivial.

A traveling wave must also exhibit constant spins. While the condition $\dot S(t) =0$ While the condition  is necessary, it is not sufficient to ensure that the spins remain unchanged. Assuming the poles travel, the evolution equation:

\[
\dot s_j(t) = 2 \iu \sum_{k\neq j} \frac{s_j(t)\times s_k(t)}{(x_j(t)-x_k(t))^2},
\]
demonstrates the existence of degrees of freedom in spin interactions, allowing for soliton solutions with non-constant spins on top of traveling poles. The necessary condition $\dot S(t) = [B(t),S(t)]=0$ implies that the infinitesimal change of coordinates must commute with $S(t)$, which is not directly given by the poles traveling.

To formalize these ideas, we introduce two distinct notions of traveling waves.

\begin{definition}
    Let $\m(t,x)$ be a rational solution of \eqref{HWMS} of the form:

    \begin{equation*}
        \m(t,x) = \m_0 + i \sum_{j=1}^N \frac{s_j(t)}{x-x_j(t)} - i \sum_{j=1}^N \frac{\bar s_j(t)}{x-\bar x_j(t)},
    \end{equation*}
    and define $L(t)$, $S(t)$, $B(t)$ and $X(t)$ correspondingly as in \cite{matsuno2022integrability}. 

    We say that the \textbf{poles are traveling} $(PT)$ at velocity $v$ if and only if for any $t \in \mathbb{R}$,
    \begin{equation}\tag{PT}
        X(t) = X(0) + t v I_N.
    \end{equation}
    This condition ensures that $\ddot X(t) =0$ and $L(t)_{i,i}=v$.

    We now say that the wave is \textbf{completely traveling} $(CT)$ at speed $v$ if and only if 
    \begin{equation}\tag{CT}
        X(t) = X(0) + t v I_N,~\dot L(t) = \dot B(t) = \dot S(t) =0.
    \end{equation}
    
\end{definition}
The second notion is stronger and aligns with the standard notion for instance found in \cite{gerard2018lax} where a classification is provided. Assuming that $L$, $B$ and $X$ completely describe $\m$, then the second characterization corresponds to the more usual definition of traveling waves
\begin{equation*}
    \m(t,x) = Q_v (x-tv).
\end{equation*}
Our goal is to demonstrate that a completely traveling wave necessarily maintains a diagonal matrix $L(t)$ for all time.

Finally, since $\dot X(t) = L(t) + [B(t),X(t)]$,
$\dot X(t)$ is a diagonal matrix, and $[B(t),X(t)]$ has vanishing coefficients on the diagonal, $L$ will be a diagonal if and only if $[B(t),X(t)]=0$. $[B(t),X(t)]$ is hence a measure of how not diagonal $L(t)$ is. Note that $B(t)$ and $X(t)$ will never commute, except if $B(t)=0$. 

Differentiating the commutator:

\begin{equation}\label{evocrochet}
    \partial_t [B(t),X(t)] = [\dot B(t), X(t)] + [B(t),L(t)] + [B(t),[B(t),X(t)]],
\end{equation}
or equivalently 
\begin{equation}
    \partial_t [B(t),X(t)] = [\dot B(t),X(t)] + [B(t), \dot X(t)],
\end{equation}
If $\dot X(t)=t vI_N$, $\dot B(t)$ must vanish for $L(t)$ to stay diagonal. An equivalent way to write \eqref{evocrochet} is 
\begin{equation}
    \partial_t [B(t),X(t)] = [\dot B(t),X(t)] + \dot L(t) + [B(t),[B(t),X(t)]].
\end{equation}
Assuming a completely traveling wave $(CT)$, then $[\dot B(t),X(t)] = 0$ and $\dot L(t)=0$ so
\begin{equation}
    [B(t),[B(t),X(t)]] = 0.
\end{equation}
It does not immediately imply $[B(t),X(t)] = 0$, we will show how we are still able to conclude. 

We now state the main theorem of this section, that provides that all the characterization are equivalent, and all the traveling definitions are equivalent. In particular, it states that there can be no solution with traveling poles but evolving spins.

\begin{proposition}\label{diag}
    Let $\m(t,x)$ be a rational solution of \eqref{HWMS}. 

    If $\m(t,x)$ is in a $(CT)$ situation, meaning
    \begin{equation}
        X(t) = X(0)+tv I_N,~ \dot L(t) = \dot B(t) =\dot S(t)= 0,
    \end{equation}
    then for all $t \in \mathbb{R}$, 
    \begin{equation}
        L(t) = v I_N.
    \end{equation}
    Additionally, if $\m(t,x)$ is in a $(PT)$ situation, meaning
    \begin{equation}
        X(t) = X(0)+ t v I_N,
    \end{equation}
    then $\m$ is in a $(CT)$ situation. Conversly, if $L(t_0)$ is diagonal for some $t_0$, then the poles are traveling. Hence,
    \begin{equation}
        (PT) \Rightarrow (CT) \Rightarrow ~(L=v I_N) \Rightarrow (PT).
    \end{equation}
    so $(CT)$, $(PT)$ and $(L=v I_N)$ are equivalent.
    
\end{proposition}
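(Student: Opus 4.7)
The plan is to prove the cycle $(L(t) = vI_N) \Rightarrow (PT) \Rightarrow (CT) \Rightarrow (L(t) = vI_N)$, yielding the three-way equivalence. The first implication is immediate: by Matsuno's conjugation formula \eqref{propMatsuno}, $L(t) = U(t)L(0)U(t)^{-1}$, and centrality of $vI_N$ forces $L(t) \equiv vI_N$; then $\dot x_j(t) = L_{j,j}(t) = v$ and integration yields $(PT)$.

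For $(PT) \Rightarrow (CT)$: the identity $\dot X = L + [B,X] = vI_N$ combined with the vanishing diagonal of $[B,X]$ (a consequence of $B$ having zero diagonal) gives $L_{i,i}(t) = v$. To obtain $\dot L = \dot B = \dot S = 0$, I would combine $X(t) = U(t)(X(0)+tL(0))U(t)^{-1}$ with $X(t) = X(0)+tvI_N$: this forces $X(0) + tL(0)$ to be conjugate to $X(0) + tvI_N$ for every $t$, and matching characteristic polynomials as polynomials in $(\lambda,t)$ shows that all eigenvalues of $L(0)$ equal $v$. Upgrading this to $L(0) = vI_N$ via the iterated-bracket argument of the next step then gives $B = 0$ and $S = 0$, making $\dot L = \dot B = \dot S = 0$ trivial, and the first implication propagates $L(t) \equiv vI_N$ to all $t$.

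For the central implication $(CT) \Rightarrow (L = vI_N)$: since $\dot B = 0$, the propagator is $U(t) = e^{tB}$; since $\dot L = [B,L] = 0$, we have $[B,L(0)] = 0$; Matsuno's formula for $X(t)$ then collapses to $X(t) = e^{tB}X(0)e^{-tB} + tL(0)$, and combining with $(PT)$,
\begin{equation*}
e^{tB}X(0)e^{-tB} = X(0) + t(vI_N - L(0)).
\end{equation*}
The right-hand side is affine in $t$, so comparing with the Taylor series $\sum_{k \geq 0} (t^k/k!)\,\mathrm{ad}_B^k X(0)$ of the left-hand side forces $\mathrm{ad}_B^k X(0) = 0$ for every $k \geq 2$; in particular $[B,[B,X(0)]] = 0$, the identity singled out in the excerpt.

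The main obstacle is to extract $[B,X(0)] = 0$ (equivalent to $B = 0$, since $X(0)$ has pairwise distinct diagonal entries and $B$ has vanishing diagonal) from this iterated-bracket relation. I would leverage the antisymmetry $B_{j,i} = -B_{i,j}$ inherited from Matsuno's convention \eqref{defLBMatsuno} and the consequent symmetry of $Y := [B, X(0)]$ under transposition (since $Y_{i,j} = (x_j - x_i)B_{i,j}$ and $B$ is antisymmetric), together with the observation that $X(0) + tY$ is conjugate to $X(0)$ for every $t$, which forces $\mathrm{tr}(Y^k) = 0$ for every $k \geq 1$ by polynomial identity in $t$, hence $Y$ nilpotent. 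Combining nilpotency of $Y$, symmetry of $Y$, the commutation $[B, Y] = 0$, and the pairwise distinctness of the entries of $X(0)$ should pin $Y$ down to zero: indeed $[B,Y]=0$ forces $Y$ to preserve the eigenspaces of $B$, while the explicit form $Y_{i,j} = (x_j-x_i)B_{i,j}$ forbids any non-vanishing off-diagonal entry compatible with nilpotency. Once $Y = 0$ is obtained, $L(0) = vI_N - Y = vI_N$, and the first step propagates this to all $t$, closing the cycle.
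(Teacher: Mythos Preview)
Your overall architecture (close the cycle $(L=vI_N)\Rightarrow(PT)\Rightarrow(CT)\Rightarrow(L=vI_N)$) matches the paper's, and your treatment of the easy implication is fine. But two of the three arrows have genuine gaps.

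\medskip
\textbf{The step $(PT)\Rightarrow(CT)$ is circular.} You correctly extract from the conjugacy $X(0)+tL(0)\sim X(0)+tvI_N$ that every eigenvalue of $L(0)$ equals $v$. The problem is the next sentence: you propose to ``upgrade this to $L(0)=vI_N$ via the iterated-bracket argument of the next step''. But that argument rests on $U(t)=e^{tB}$, i.e.\ on $\dot B=0$, which is precisely the $(CT)$ hypothesis you have not yet established. Under $(PT)$ alone, $B(t)$ is time-dependent, $U(t)$ is a genuine ordered exponential, and the Taylor comparison with $\sum_k (t^k/k!)\,\mathrm{ad}_B^k X(0)$ is unavailable. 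The paper's proof of this implication (its Proposition on $(PT)\Rightarrow(CT)$) is substantially longer: it repeatedly differentiates the identity $U(t)^{-1}X(0)U(t)-X(0)=-t[B(0),X(0)]$ and exploits that $X(0)$ has pairwise distinct diagonal entries to successively force $[B(t),U(t)]=0$, then $[\dot B(t),U(t)]=0$, then $\ddot B=0$, and finally $[\dot B,X(0)]=0$, which gives $\dot B=0$. None of this machinery is replaced by your shortcut.

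\medskip
\textbf{The step $(CT)\Rightarrow(L=vI_N)$ is incomplete at the crucial point.} You correctly derive $e^{tB}X(0)e^{-tB}=X(0)+t(vI_N-L(0))$ and hence $\mathrm{ad}_B^k X(0)=0$ for $k\ge 2$; the observation that $X(0)+tY\sim X(0)$ forces $\mathrm{tr}(Y^k)=0$ and thus $Y:=[B,X(0)]$ nilpotent is clean and not in the paper. But the closing move has two problems. First, the antisymmetry $B_{j,i}=-B_{i,j}$ is not justified by the definition \eqref{defLBMatsuno}: the denominator $(x_i-x_j)^2$ is symmetric and the sign factors $\varepsilon_{i,j}$ are left unspecified, so you cannot simply read off $Y^T=Y$. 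Second, even granting complex symmetry of $Y$, a complex symmetric nilpotent matrix need not vanish (e.g.\ $\begin{psmallmatrix} i & 1\\ 1 & -i\end{psmallmatrix}$), and your final sentence (``the explicit form $Y_{i,j}=(x_j-x_i)B_{i,j}$ forbids any non-vanishing off-diagonal entry compatible with nilpotency'') is an assertion, not an argument. The paper's route here is different: rather than nilpotency, it aims to show directly that $U(t)$ commutes with $X(0)$ (via an auxiliary quantity $A(t)=[B,U(t)X(0)U(t)^{-1}]$ satisfying a Lax-type evolution), and then differentiates $[U(t),X(0)]=0$ to obtain $[B,X(0)]=0$. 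If you want to salvage your approach, you will need to bring in the remaining $(CT)$ information $[B,S]=0$ together with the explicit relations $L_{i,j}=(x_i-x_j)B_{i,j}$ and $S_{i,j}=(x_i-x_j)^2 B_{i,j}/\varepsilon_{i,j}$ to rule out a nontrivial nilpotent $Y$; as written, the argument does not close.
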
 

\subsection{Proof of Proposition \ref{diag}}

In this subsection, we prove Proposition \ref{diag}. We start with $(L=v I_N) \Rightarrow (CT)$ (Lemma \ref{LCT}), we then proceed with $(PT)\Rightarrow (CT)$ (Proposition \ref{PTCT}), which is the hardest part, and conclude by showing that $(CT)\Rightarrow (L=v I_N)$ (Lemma \ref{PTL}).

\begin{lemma}\label{LCT}
    If there exists some $t_0 \in \mathbb{R}$ such that $L(t_0)= vI_N$, then the wave is completely traveling $(CT)$, meaning
    \begin{equation}
        \dot L(t) = \dot B(t) = 0,~ \dot X(t) = vI_N.
    \end{equation}
    Conversely, if the wave is traveling at constant speed $v$, then for all $t\in \mathbb{R}$
    \begin{equation}
        \chi(L(t))(\lambda) = Det ( \lambda I_N - L(t)) = (\lambda - v)^N.
    \end{equation}
    If we further assume that $L(t_0)$ is diagonalizable, then $L(t) = vI_N$ for all $t$. 
    
\end{lemma}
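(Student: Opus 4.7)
The plan is to exploit the conjugation formulas from \eqref{propMatsuno}, namely $L(t) = U(t) L(0) U(t)^{-1}$ and $X(t) = U(t)(X(0) + tL(0)) U(t)^{-1}$, together with the explicit entry-wise definitions of $L$, $B$ and $S$ from \eqref{defLBMatsuno} and \eqref{defS}.

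For the forward direction, assume $L(t_0) = vI_N$. Since $vI_N$ is central, conjugation-invariance forces
\[
L(0) = U(t_0)^{-1} (v I_N) U(t_0) = v I_N,
\]
and therefore $L(t) = U(t)(vI_N) U(t)^{-1} = vI_N$ for every $t$. The Lax equation $\dot L = [B,L]$ then trivially yields $\dot L \equiv 0$. Reading off the diagonal and off-diagonal entries of $L(t) = vI_N$ from \eqref{defLBMatsuno} gives $\dot x_j(t) = v$ for every $j$ (so $\dot X(t) = vI_N$) and $\sqrt{-2\, s_j(t)\cdot s_k(t)} = 0$ for every $j \neq k$. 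The latter means $S(t) \equiv 0$, which by the definition of $B$ in \eqref{defLBMatsuno} forces $B(t) \equiv 0$, and in particular $\dot S(t) = \dot B(t) = 0$; this is the $(CT)$ situation.

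For the converse, assume the poles are traveling so that $X(t) = X(0) + tvI_N$. Then the spectrum of $X(t)$ is $\{x_j(0) + tv\}_j$, but by \eqref{propMatsuno} it coincides with the spectrum of $X(0) + tL(0)$. Rescaling,
\[
\chi_{(X(0)+tL(0))/t}(\lambda) = \prod_{j=1}^N \Bigl(\lambda - v - \frac{x_j(0)}{t}\Bigr) \xrightarrow[t\to \infty]{} (\lambda - v)^N,
\]
while the left-hand side converges pointwise (and uniformly on compacts) to $\chi_{L(0)}(\lambda)$. Hence $\chi_{L(0)}(\lambda) = (\lambda - v)^N$, and since $L(t)$ and $L(0)$ are conjugate, $\chi_{L(t)}(\lambda) = (\lambda - v)^N$ for every $t$. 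If $L(t_0)$ is additionally diagonalizable, then having $v$ as its unique eigenvalue forces $L(t_0) = vI_N$, and the forward direction then yields $L(t) = vI_N$ for all $t$.

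The main obstacle is really the converse: one has to extract information about $L(0)$ itself from the identity of multisets $\{\operatorname{spec}(X(0)+tL(0))\} = \{x_j(0)+tv\}$ valid for every $t$, without assuming anything on the Jordan structure of $L(0)$. The rescaling $t \to \infty$ trick above isolates the top-order term and bypasses any fine spectral analysis, but it only delivers the characteristic polynomial; this is precisely why the stronger conclusion $L(t) \equiv vI_N$ requires the additional diagonalizability hypothesis, since a non-trivial nilpotent part of $L(t_0) - vI_N$ would be perfectly consistent with $\chi_{L(t_0)}(\lambda) = (\lambda-v)^N$ and with $(PT)$, while failing $L(t_0) = vI_N$.
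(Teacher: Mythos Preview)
Your proof is correct, and the converse direction (extracting $\chi_{L(0)}(\lambda)=(\lambda-v)^N$ by the rescaling $t\to\infty$ of the conjugacy $X(t)\sim X(0)+tL(0)$) is essentially identical to the paper's argument.

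In the forward direction you take a genuinely different, more elementary route. The paper argues structurally: from $L(t)\equiv vI_N$ it writes $X(t)=tvI_N+U(t)X(0)U(t)^{-1}$, deduces $X(t)=X(0)+tvI_N$ via equality of characteristic polynomials of $X(t)-tvI_N$ and $X(0)$ (together with simplicity of the poles), and then differentiates the identity $\dot X=L+[B,X]$ to obtain $[\dot B(t),X(t)]=0$, whence $\dot B=0$ since $X$ has distinct diagonal entries. You instead read off the explicit entries of $L(t)=vI_N$ from \eqref{defLBMatsuno}: the diagonal gives $\dot x_j=v$ immediately (no spectral/continuity step needed), and the off-diagonal gives $\sqrt{-2\,s_j\cdot s_k}=0$, hence $S\equiv0$ and $B\equiv0$ outright. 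Your approach is shorter and yields the stronger conclusion $B\equiv0$ (not just $\dot B=0$) without invoking the simple-poles hypothesis for the commutator step; the paper's route, by contrast, stays at the level of the Lax structure and would survive replacing the concrete formulas \eqref{defLBMatsuno}, \eqref{defS} by any realization satisfying \eqref{propMatsuno}.
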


\begin{proof}
    We first assume that $L(0)=v I_N$. By the Lax equation, we have for all $t \in \mathbb{R}$

    \begin{equation}
        L(t) = U(t) L(0) U(t)^{-1} = vI_N.
    \end{equation}
    Next, considering the evolution of $X(t)$, we obtain 

    \begin{equation}
        X(t) = U(t) (t L(0) + X(0) ) U(t)^{-1} = tv I_N + U(t) X(0) U(t)^{-1},
    \end{equation}
    so 
    \begin{equation}
        Det (X(t) - tv I_N - \lambda I_N) = Det (X(0) - vt I_N).
    \end{equation}
    In addition, travelingness of the poles give by definition $X(t) = X(0) + t v I_N$. Differentiating the commutator $[B(t),X(t)]$ using the expression for $\dot X(t)$ and by direct differentiation of the commutator provides the two expressions for $\partial_t [B(t),X(t)]$:

    \begin{equation}
    \left\{
    \begin{aligned}
        &\partial_t [B(t),X(t)] = \ddot X(t) - \dot L(t) = 0,\\
        &\partial_t [B(t),X(t)] = [\dot B(t),X(t)] + [B(t),\dot X(t)] = [\dot B(t),X(t)],
    \end{aligned}
    \right.
    \end{equation}
    which provides $[\dot B(t),X(t)]= 0$ and, using the simple poles assumption, we obtain $ \dot B(t)=0$.

    We now look at the converse, and assume that the wave is traveling, i.e.
    \begin{equation}
        \dot X(t) = v I_N,~\dot L(t) = 0,~ \dot B(t) =0.
    \end{equation}
    We consider the expression
        $X(t) = U(t) (X(0) + t L(t_0)) U(t)^{-1}$ and deduce $
        \frac{1}{t} X(0) + vI_N  = U(t) \left(\frac{1}{t} X(0) + L(0) \right) U(t)^{-1}$.
    In particular,
    \begin{equation}
        Det\left( \frac{1}{t} X(0) + vI_N - \lambda I_N \right) = Det \left(\frac{1}{t} X(0) + L(0) - \lambda I_N\right).
    \end{equation}
    By continuity, the left-hand side converges to $(v-\lambda)^N$ while the right-hand side converges to $\chi (L(0))(\lambda)$. Finally, if $L(0)$ is diagonalizable, then $L(0) = P (v I_N) P^{-1} = v I_N$.

\end{proof}

\begin{proposition}\label{PTCT}
    If the poles are traveling $(PT)$, meaning that for all $t\in \mathbb{R}$,

    \begin{equation}
        X(t) = X(0) + tv I_N,
    \end{equation}
    then the wave is completely traveling $(CT)$, meaning
    \begin{equation}
        \dot L(t) = \dot B(t) = [B(t),X(t)] =0 
    \end{equation}
    for all times.
    
\end{proposition}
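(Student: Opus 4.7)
First, extract the immediate algebraic consequences of $(PT)$. From $\dot X = v I_N$ and the Matsuno identity $\dot X = L + [B,X]$, together with the fact that $X(t) - X_0 = tv I_N$ is central, I would obtain $L(t) = v I_N - [B(t),X_0]$; reading diagonals gives $L(t)_{ii} = v$, and setting $\tilde L := L - v I_N$ yields $[B(t), X_0] = -\tilde L(t)$, or equivalently $B(t)_{ij} = \tilde L(t)_{ij}/(x_i(0) - x_j(0))$ for $i \neq j$. The converse direction already proven in Lemma \ref{LCT} then forces $\chi_{L(t)}(\lambda) = (\lambda - v)^N$, so $\tilde L(t)$ is nilpotent for every $t$. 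Differentiating $\dot X = v I_N$ once more and combining with the Lax pair $\dot L = [B,L]$ produces the master identity
\[
[\dot B, X_0] \;=\; [B,[B,X_0]] \;=\; -[B, \tilde L].
\]
Since $\operatorname{ad}_{X_0}$ is invertible on off-diagonal matrices (the $x_j(0)$'s being distinct), this identity shows that $\dot L = 0$ is equivalent to $[B, \tilde L] = 0$, and hence, via $B_{ij} = \tilde L_{ij}/(x_i-x_j)$, to $\tilde L \equiv 0$.

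The crux of the proof is therefore to show $\tilde L \equiv 0$. My plan is to combine the explicit half-spin formula with the HWM compatibility constraints \eqref{CONSM}. Passing to the moving frame $y = x - tv$, the explicit formula from \cite{ohlmann2024halfwavemapsexplicitformulas} becomes
\[
\sum_{j=1}^N \frac{A_j(t)}{y - x_j(0)} \;=\; -T^T \mathcal{E}_0 \mathcal{H} \bigl[ X_0 + t \tilde L(0) - y I_N \bigr]^{-1} \mathcal{F}_0 T,
\]
so each $A_j(t)$ is the residue at $y = x_j(0)$ of the right-hand side, entirely determined by the $t$-dependent spectral projectors of $X_0 + t\tilde L(0)$ at its (constant) eigenvalues $x_j(0)$. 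If $\tilde L(0) \neq 0$, then $\tilde L(0)$ does not commute with $X_0$, so these projectors depend nontrivially on $t$ and hence $A_j(t)$ does too. I would then impose \eqref{CONSM} identically in $t$ --- $A_j(t)^2 = 0$ and $A_j(t)B_j(t) + B_j(t)A_j(t) = 0$ --- and expand in powers of $t$ around $t=0$; the resulting cascade of algebraic identities, together with the nilpotence of $\tilde L(0)$, rules out any nonzero $\tilde L(0)$. Once $\tilde L(0) = 0$ is established, Lax evolution gives $\tilde L(t) \equiv 0$, whence $L \equiv v I_N$, $B \equiv 0$, and $\dot L = \dot B = [B, X] = 0$, which is precisely $(CT)$.

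The main obstacle is the rigidity step above: pure linear-algebraic iso-spectrality of $X_0 + t\tilde L(0)$ is not enough to force $\tilde L(0) = 0$ (one can write down explicit nilpotent $\tilde L(0) \neq 0$ with iso-spectral flow already for $N=3$, e.g. with $X_0 = \operatorname{diag}(-1,0,1)$), so the HWM constraints \eqref{CONSM} --- which couple the residues $A_j$ to $m_0$ and to the reflected poles $\bar x_k$ --- must be exploited in an essential way. A cleaner alternative route would be to upgrade nilpotence to vanishing by invoking a symmetry of Matsuno's $L$ inherited from the Hermiticity of the Gérard--Lenzmann operator $\mathcal{L} = [H, \mu_M]$, which would make any nilpotent $\tilde L$ automatically zero; either route closes the proposition.
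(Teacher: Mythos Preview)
Your setup is clean and correct: the relations $L = vI_N - [B,X_0]$, the nilpotence of $\tilde L$ from $\chi_L(\lambda) = (\lambda-v)^N$, and the identity $[\dot B, X_0] = -[B,\tilde L] = -\dot L$ are all right. You also correctly diagnose that iso-spectrality of $X_0 + t\tilde L(0)$ alone cannot force $\tilde L(0)=0$ (your $N=3$ remark is apt). But this is precisely where your proof stops being a proof. Neither of your two proposed closures works as stated. For the second route: the paper notes explicitly that Matsuno's $L$ is \emph{not} Hermitian, so there is no symmetry inherited from $\mathcal L = [H,\mu_M]$ that kills nilpotent $\tilde L$ for free; that door is closed. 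For the first route: ``expand \eqref{CONSM} in powers of $t$ and the cascade rules out $\tilde L(0)\neq 0$'' is a hope, not an argument --- you give no indication of which coefficients you would compare, how the nilpotence interacts with the constraint hierarchy, or why the system is overdetermined. Also, your aside that $[B,\tilde L]=0$ is \emph{equivalent} to $\tilde L=0$ via $B_{ij}=\tilde L_{ij}/(x_i-x_j)$ is unjustified: one direction is trivial, the other is not, and you do not use it anyway.

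The paper's proof is entirely different and avoids the rigidity problem altogether. It never touches the explicit formula or \eqref{CONSM}; it works purely inside the Matsuno algebra $(X,L,B,U)$. From $X(0) = U(t)\bigl(X(0) - t[B(0),X(0)]\bigr)U(t)^{-1}$ it extracts, by repeated differentiation and comparison with $[B(t),X(t)] = U(t)[B(0),X(0)]U(t)^{-1}$, the key commutation $[B(t),U(t)]=0$ (using that $X(0)$ has distinct diagonal entries). From there it obtains $[\dot B(t),U(t)]=0$, then $\ddot B=0$, and finally evaluates a twice-differentiated identity at $t=0$ to get $[\dot B, X_0]=0$, hence $\dot B=0$. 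Constancy of $B$ then forces constancy of $L$, and the remaining conclusions follow. The moral: instead of trying to prove $\tilde L(0)=0$ directly from structure or constraints, the paper exploits the \emph{flow} $U(t)$ and the global-in-$t$ identity it satisfies, which carries enough information to pin down $\dot B$ without ever resolving the nilpotent ambiguity head-on.
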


\begin{proof}

    Starting with $X(t) = X(0) + t v I_N$  again in combination with the expression 
    $$X(t) = U(t) (X(0) + t L(0) ) U(t)^{-1},$$
    instantiating $t=0$ yields  
    \begin{equation}
        X(0) = U(t) ( X(0) - t [B(0),X(0)] ) U(t)^{-1}.
    \end{equation}
    Now, $\ddot X(t) = 0$ gives
        $[B(t),L(t)] + \partial_t [B(t),X(t)] =0$. 
    Since $[B(t),\dot X(t)] =0$, we conclude

    \begin{equation}
        [B(t),L(t)] + [\dot B(t),X(t)]=0.
    \end{equation}
    Also, applying the commutator with $L(t)$ in $\dot X(t) = L(t) + [B(t),X(t)]$ gives
    \begin{equation}
        [L(t),[B(t),X(t)]]=0.
    \end{equation}
    Since $L(t) = U(t) L(0) U(t)^{-1}$, then

    \begin{equation}
        [B(t),X(t)] = U(t) [B(0),X(0)] U(t)^{-1}.
    \end{equation}
    Now, differentiating 
    \begin{equation}
        U(t)^{-1} X(0) U(t) - X(0) = -t [B(0),X(0)].
    \end{equation}
    gives

    \begin{equation}
        U(t)^{-1} [B(t),X(0)] U(t) =  [B(0),X(0)],
    \end{equation}
    differentiating a second time and applying $t=0$ gives

    \begin{equation}\label{commuderiv}
        [B(0),[B(0),X(0)]] - [\dot B(0),X(0)] = 0.
    \end{equation}
    Since 
    \begin{equation}
        X(t) = U(t) (X(0) - t [B(0),X(0)] + t v I_N ) U(t)^{-1},
    \end{equation}
    we obtain using $X(t) = X(0) + v t I_N$ the identities
    \begin{equation}
        X(0) + t [B(t),X(t)] = U(t) X(0) U(t)^{-1},~ X(0) = U(t) (X(0) - t [B(0),X(0)] ) U(t)^{-1}.
    \end{equation}
    We define the functional $H_2$ and consider its derivative 
    \begin{equation}
        H_2(t) = U(t)^{-1} X(0) U(t) - X(0),~ \partial_t H_2(t) = -[B(t),U(t)^{-1} X(0) U(t)].
    \end{equation}
    But we also have $\partial_t H_2(t) = - [B(0),X(0)]$, so the two expressions have to coincide and we get

    \begin{equation}\label{commu}
        [B(0),X(0)] = [B(t),U(t)^{-1} X(0) U(t)].
    \end{equation}
    Using the commutativity of $v I_N$ we compute
    \begin{equation}
        [B(t),X(t)] = U(t) [B(0),X(0)] U(t)^{-1} = [U(t) B(t) U(t)^{-1}, X(0)].
    \end{equation}
    Since $[B(t),\dot X(z)] = 0$, we get 
        $[B(t) - U(t) B(t) U(t)^{-1},X(0)] = 0$.
    We now obtain since $X$ is a diagonal with distinct elements, 

    \begin{equation}
        B(t) = U(t) B(t) U(t)^{-1},~ [B(t),U(t)] =0.
    \end{equation}
    From this,

    \begin{equation}
        [B(t),X(t)] = [B(0), U(t) X(0) U(t)^{-1}].
    \end{equation}
    Using \eqref{commuderiv} and differentiating 

    \begin{equation}
        U(t) [B(t),X(t)] U(t)^{-1} = [B(0), X(0)],
    \end{equation}
    with respect to $t$ yields

    \begin{multline}
        [\dot B(t), U(t)^{-1} X(0) U(t) ] = [B(t),[B(t),U(t)^{-1} X(0) U(t)]] \\
        = U(t)^{-1} [\dot B(t), X(0)] U(t) = [U(t)^{-1} \dot B(t) U(t), U(t)^{-1} X(0) U(t)],
    \end{multline}

    and so

    \begin{equation}
         [U(t) \dot B(t) U(t)^{-1}, X(0)] = [\dot B(t),X(0)],
    \end{equation}
    which means that $\dot B(t)$ also commutes with $U(t)$. Since
    \begin{equation}
        U(t)^{-1}[B(t),X(t)]U(t) - [B(t),X(t)] = t [B(t),[B(t),X(t)]],
    \end{equation}
    so 
    
    we see that the derivative behaves as constant $[B(t),[B(t),X(t)]]$ instead of\\
    $[B(z),[B(z),X(z)]]$. Differentiating $-[B(t),X(t)]$ gives
    \begin{equation}
        \partial_t [B(t),[B(t),X(t)]]]=0,
    \end{equation}
    so $\ddot B(t) = 0$. Since 
    \begin{equation}
        [B(t),[B(t),X(t)]] = - [B(t),L(t)],
    \end{equation}
    $\ddot L(t)=0$. We obtain using the Lax relation for $L$ that $[\dot B,\dot L]=0$ and $[\dot B,[\dot B,\dot X]] = 0$.

    Differentiating the relation for $X(0)$ gives

    \begin{equation}
        [B(t),U(t) X(0) U(t)^{-1}] = [B(t),X(0)] + t [\dot B,X(0)],
    \end{equation}
    and differentiating again yields 
    \begin{equation}
        [B(t),[B(t),U(t)X(0)U(t)^{-1}]] - [B(t),[B(t),X(t)]] = [\dot B,X(0)] + t [\ddot B(t),X].
    \end{equation}
    Applying $t=0$ now gives since $U(0)=I_N$, 

    \begin{equation}
        [\dot B,X(0)] = 0 \Rightarrow \dot B=0.
    \end{equation}
    We now have that $B$ is constant. We obtain that $L$ is constant since 

    \begin{equation}
        L(t) = v I_N - [B(t),X(t)],
    \end{equation}
    and $x_i(t) - x_j(t)$ is constant. Since $L$, and $B$ are constant, we know that everything is traveling. We deduce that $L(t)$ is diagonal and 
    \begin{equation}
        [B(t),X(t)] = B(t) = S(t) = 0.
    \end{equation}

\end{proof}

\begin{theorem}\label{PTL}
    If $\m$ is completely traveling $(CT)$, then for any $t\in \mathbb{R}$,

    \begin{equation}
        L(t) = vI_N.
    \end{equation}
\end{theorem}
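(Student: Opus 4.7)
The plan is to exploit the fact that the hypothesis $(CT)$ is strictly stronger than $(PT)$ by construction: the $(CT)$ assumption explicitly includes the equality $X(t) = X(0) + tvI_N$, which is exactly the defining condition of $(PT)$. Thus the implication $(CT) \Rightarrow (PT)$ is immediate from unpacking the definitions and requires no work.

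With $(PT)$ now available, I will invoke the already-proved Proposition \ref{PTCT}, whose stated conclusion contains the identity $[B(t),X(t)] = 0$ for all $t \in \mathbb{R}$; in fact the proof of that proposition establishes the stronger assertion $B(t) = S(t) = 0$, but only the vanishing of the commutator $[B,X]$ is needed for the present statement.

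To translate the vanishing of $[B,X]$ into the desired claim about $L$, I will use the general identity
\[
\dot X(t) = L(t) + [B(t), X(t)],
\]
obtained by differentiating the Matsuno relation $X(t) = U(t)\bigl(X(0) + tL(0)\bigr)U(t)^{-1}$ from \eqref{propMatsuno} and applying $\dot U(t) = B(t)U(t)$. Substituting $\dot X = vI_N$ (from $(CT)$) together with $[B(t),X(t)] = 0$ (from Proposition \ref{PTCT}) into this identity yields $L(t) = vI_N$ for every $t \in \mathbb{R}$, which is exactly the conclusion of the theorem. Since all substantive commutator gymnastics are already encapsulated in Proposition \ref{PTCT}, no real obstacle remains here; the statement is essentially a two-line corollary of that proposition, and its role in the structure of the section is simply to close the equivalence loop $(PT) \Leftrightarrow (CT) \Leftrightarrow (L = vI_N)$ announced at the start of the subsection.
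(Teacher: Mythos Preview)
Your argument is correct but follows a different route from the paper. You reduce the statement to a corollary of Proposition \ref{PTCT}: since $(CT)$ contains the condition $X(t)=X(0)+tvI_N$, which is precisely $(PT)$, you may invoke that proposition's stated conclusion $[B(t),X(t)]=0$, and then the identity $\dot X(t)=L(t)+[B(t),X(t)]$ together with $\dot X(t)=vI_N$ yields $L(t)=vI_N$ immediately.

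The paper instead gives a self-contained argument directly from the $(CT)$ hypotheses, without cycling back through $(PT)$ and Proposition \ref{PTCT}. It uses that $B$ is constant (so $U(t)=e^{tB}$ commutes with $B$), derives $[B,[B,X(0)]]=0$, introduces $A(t)=[B,U(t)X(0)U(t)^{-1}]$, and checks that $A$ satisfies $\dot A=[B,A]$ with $A(0)=[B,[B,X(0)]]=0$, forcing $A\equiv 0$. This gives $X(0)=U(t)X(0)U(t)^{-1}$, hence $[U(t),X(0)]=0$, hence $[B,X(0)]=0$, and finally $L(t)=\dot X(t)-[B,X]=vI_N$.

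Your approach is more economical and logically sound; the only cost is independence. The paper's proof stands on its own from $(CT)$, whereas yours leans on the commutator work already packaged inside Proposition \ref{PTCT}. Since the end of that proposition's proof in fact already asserts $[B(t),X(t)]=B(t)=S(t)=0$, one could argue Theorem \ref{PTL} is nearly redundant in the logical chain; the paper presumably isolates it to exhibit a direct argument for $(CT)\Rightarrow(L=vI_N)$ that does not re-enter the $(PT)$ machinery.
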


\begin{proof}
    Since $\m$ is completely traveling $(CT)$, so $B(t)=B$ is constant. Since $\dot L(t)=0$, we have:
    \begin{equation}
        [B,L(t)] = 0.
    \end{equation}
    Using $X(t) = X(0)+t v I_N$, we obtain $L=\dot X - [B,X(0)]$ and since $[B,\dot X]=0$, it follows that: 
    \begin{equation}
        [B,[B,X]] =0.
    \end{equation}
    The time evolution of $X$ is also given by 
    \begin{equation}
        X(t) = U(t) (X(0)+tv I_N - [B,X])U(t)^{-1},
    \end{equation}
    Hence,

    \begin{equation}
        X(0) = U(t) X(0) U(t)^{-1} - U(t) [B(t),X(0)] U(t)^{-1} = U(t) X(0) U(t)^{-1} - A(t).
    \end{equation}
    Our goal is now to show that $U(t)$ commutes with $X$. Since $B(t)=B$ is constant, $U(t) = e^{tB}$ commutes with $B$, leading to:
    \begin{equation}
        A(t) =  [B(t),U(t)X(0)U(t)^{-1}].
    \end{equation}
    Differentiating with respect to time yields

    \begin{multline}
        \partial_t A(t) = [B,BU(t)X(0)U(t)^{-1}- U(t)X(0)U(t)^{-1}B] \\
        = [B,[B,U(t)X(0)U(t)^{-1}]] = [B,A(t)],
    \end{multline}
    $A(t)$ hence satisfies a Lax equation and is given by, $A(t) = U(t) A(0) U(t)^{-1}$. Since $A(0) = [B,[B,X]] = 0$, we obtain $\Rightarrow A(t)=0$.
    Finally, this implies

    \begin{equation}
        X(0) = U(t) X(0) U(t)^{-1},
    \end{equation}
    so $[U(t),X]=0.$ Differentiating this expression gives and using $[B,U]=[X,U]=0$, we conclude
    \begin{equation}
        B U(t) X - X B U(t) = U(t) [B,X] = 0.
    \end{equation}
    Finally,

    \begin{equation}
        L(t) = \dot X(t) - [B,X] = v I_N.
    \end{equation}
    
\end{proof}

\subsection{Transfer from $+\infty$}

In this section, we bring together the results of the previous sections to provide a proof to Theorem \ref{thm:scatterTrav}. We will use that convergence in Sobolev norm as $t \rightarrow +\infty$ implies convergence of the coefficients, up to a subsequence and a permutation, see Appendix \ref{norm}. In addition, traveling waves are functions represented by a diagonal matrix for $L$ and a vanishing matrix for $B$. We recall Theorem \ref{thm:scatterTrav} and go on with the proof.

\trav*

\begin{proof}

    We first use Proposition \ref{diag} to deduce the shape of the different matrices corresponding to $G$. Indeed, since $G$ is traveling, we obtain
    \begin{equation}
        L_G(t) = v I_N,\quad B_G(t)=S_G(t) = 0.
    \end{equation}
    This implies $\textbf{c}_j \cdot \textbf{c}_k = 0$ for $j \neq k$. Corollary \ref{bij} states that the convergence in norm $||V(t,x) - G(t,x) || \rightarrow 0$ implies in particular the convergence in coefficients up to a bijection and a subsequence. Let then be $\phi$ and $t(n)$ such that 
    \begin{equation}
        A_j(t(n)) \rightarrow C_{\phi(j)},~ x_j(t(n)) - vt - y_{\phi(j)} \rightarrow 0.
    \end{equation}
    In particular, for $j\neq k$,
        $s_j(t(n)) \cdot s_k(t(n)) \rightarrow c_{\phi(j)} \cdot c_{\phi(k)} =0$.
    Also, we have 
    \begin{equation}
        \frac{1}{x_i(t) - x_j(t)} \rightarrow \frac{1}{y_{\phi(i)} - y_{\phi(j)}}.
    \end{equation}
    Hence, for $t$ large enough, 
        $|x_i(t)-x_j(t)|^{-1} \leq C$.
    Since $\partial_t \sum_{j=1}^N \im(x_i(t)) =0$ and $\im(x_i(t)) \geq 0$, then $\im(x_i(t)) \leq C$. Hence, 
        $||B_F(t)|| \leq C ||S_F(t)||$.
    Now, using $\partial_t S_F(t) = [B_F(t),S_F(t)]$, we get $\partial_t \left|  S_F(t) \right| \leq C |S_F(t)|^2$.
    This differential inequality is not compatible with $S_F(t) \rightarrow S_G = 0$, except if for all times, $S_F(t) = 0$. This implies in particular that
    \begin{equation}
        B_F(t)=0 \Rightarrow \partial_t L_F(t) = 0,
    \end{equation}
    and $L_F(t) = \dot X(t) - [B_F(t),X(t)] = \dot X(t)$, so the poles are traveling at a constant speed, and  writing
    \begin{equation}
        \chi(L_F(t)) \rightarrow \chi(L_G(t)) = \chi(v I_N)
    \end{equation}
    gives that all the speeds are equal to $v$.
    
\end{proof}

We now state another version of the theorem. While the first version showed that if a wave scatters to a traveling wave in Sobolev norm, then it had to be traveling for all times, this version shows that if the corresponding matrices converge to traveling ones, then the wave has to be traveling for all time. 

\begin{theorem}
    The solution of the (HWM) equation can't scatter in coefficients at $+\infty$ to a traveling wave without being a traveling at all time. More precisely, if
    \begin{equation}
        L_\infty = v I_N,~ B_\infty = 0,~ \dot X_\infty = v I_N, S_\infty=0,
    \end{equation}
    and 
    \begin{equation}
    \left\{
    \begin{aligned}
        &L(t) \xrightarrow[t\rightarrow \infty]{} L_\infty,\\
        &B(t) \xrightarrow[t \rightarrow \infty]{} B_\infty,\\
        &S(t) \xrightarrow[t \rightarrow \infty]{} S_\infty,\\
        &X(t)-vI_N \xrightarrow[t \rightarrow \infty]{} X_\infty,\\
    \end{aligned}
    \right.
    \end{equation}
    then at any time,
    
    \begin{equation}
        L(t) = v I_N,~ B(t) = 0,~ \dot X(t) = v I_N, S(t)=0.
    \end{equation}
    We also assume simple poles, i.e the diagonal matrix $X_\infty$ has distinct elements.
    
\end{theorem}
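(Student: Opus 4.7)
The plan is to argue almost identically to the proof of Theorem~\ref{thm:scatterTrav}, but with the Sobolev-to-coefficient extraction step bypassed because the present theorem assumes matrix-level convergence directly. Proposition~\ref{diag} gives the equivalences $(PT)\Leftrightarrow(CT)\Leftrightarrow(L=vI_N)$, so it suffices to establish $L(t_\star)=vI_N$ at a single time $t_\star$; that automatically yields $L(t)=vI_N$, $B(t)=0$, $S(t)=0$, and $\dot X(t)=vI_N$ for every $t\in\mathbb{R}$.

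The first concrete step is to extract a uniform positive lower bound on the pole separations. Since $X_\infty$ is diagonal with distinct entries and $X(t)-vtI_N\to X_\infty$, for all $t$ beyond some threshold $t_\star$ one has $|x_i(t)-x_j(t)|\geq c$ for all $i\neq j$ and a fixed $c>0$. The imaginary parts $\im x_i(t)$ are also uniformly bounded from above, because $\sum_i\im x_i(t)$ is conserved and each $\im x_i(t)\geq 0$. From \eqref{defLBMatsuno} and \eqref{defS}, $B_{ij}(t)=S_{ij}(t)/(x_i(t)-x_j(t))^2$, so these two bounds give $\|B(t)\|\leq C\|S(t)\|$ for all $t\geq t_\star$.

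Combined with the Lax equation $\partial_t S=[B,S]$, this yields the differential inequality $|\partial_t\|S(t)\||\leq 2C\|S(t)\|^2$ on $[t_\star,\infty)$. Paired with the hypothesis $S(t)\to 0$, the same argument used at the end of the proof of Theorem~\ref{thm:scatterTrav} forces $S(t)\equiv 0$ on $[t_\star,\infty)$. Once $S$ vanishes there, the formula for $B$ gives $B\equiv 0$ as well, so $\partial_t L=[B,L]=0$ on $[t_\star,\infty)$; combining this with $L(t)\to vI_N$ then yields $L(t_\star)=vI_N$. Proposition~\ref{diag} closes the argument by converting this single-time identity into the global statements for every $t$.

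The main obstacle, exactly as in Theorem~\ref{thm:scatterTrav}, is the extraction of $S\equiv 0$ from the differential inequality together with the vanishing at infinity; the rest of the reasoning is a direct algebraic consequence of Proposition~\ref{diag} and of the explicit structure of the Matsuno matrices $L$, $B$, $S$. In particular, no new analytic ingredient is needed beyond what is already established in Section~\ref{sec:matTrav}, so the present theorem is essentially a restatement of Theorem~\ref{thm:scatterTrav} with the Sobolev hypothesis weakened to matrix convergence, which is strictly weaker once the coefficient extraction lemma of Appendix~\ref{norm} is applied in reverse.
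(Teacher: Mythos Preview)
Your proposal is correct and follows essentially the same route as the paper: bound the pole separations from below using $X(t)-vtI_N\to X_\infty$, deduce $\|B(t)\|\leq C\|S(t)\|$, feed this into $\dot S=[B,S]$ to obtain the quadratic differential inequality, and conclude $S\equiv 0$, hence $B\equiv 0$ and $L=vI_N$. The only cosmetic difference is that you conclude $L(t_\star)=vI_N$ at one time and then invoke Proposition~\ref{diag} to propagate globally, whereas the paper argues directly that $L$ is diagonal (from $\dot X=L+[B,X]$ with $B=0$) with all eigenvalues equal to $v$; both are equivalent, and the Lax relation $S(t)=U(t)S(0)U(t)^{-1}$ makes either extension immediate. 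Your mention of the imaginary-part bound is harmless but unnecessary here, since the pole-separation lower bound already follows directly from the convergence of $X(t)-vtI_N$.
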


\begin{proof}
    For $t$ large enough, using $X(t) - v I_N \rightarrow X_\infty$, then with
    \begin{equation}
        \delta = \min_{i\neq j} |x_i(\infty) - x_j(\infty)|,
    \end{equation}
    we have 
        $|x_i(t) - x_j(t)| \geq \delta$. $B(t)$ can then be upper-bounded using $|S(t)|$ and an absolute constant, indeed

    \begin{equation}
        |B_{i,j} (t)| = \frac{|\sqrt{2s_i(t) \cdot s_j(t)}|}{|x_i(t)-x_j(t)|^2} \leq \frac{\sqrt{2 s_i(t) \cdot s_j(t)}}{\delta^2},
    \end{equation}
    which means that we also have for $B$
        $|B(t)| \leq C |S(t)|$.
    Now, since we have 
        $\dot S(t) = [B(t),S(t)]$,
    we obtain
        $\partial_t ||S(t)|| \leq C ||S(t)||^2$.
    In particular, we can't have
        $||S(t)|| \rightarrow 0$
    except if for all time $t\in \mathbb{R}$, 
        $S(t)=0$.
    This implies that $B(t)=0$, and $L(t) = \dot X(t)$ gives that $L(t)$ is a diagonal matrix. Since the eigenvalues are constant and are all equal to $v$, we obtain $L(t) = v I_N$, implying that the wave is traveling for all time.
    
\end{proof}

\printbibliography

\appendix

\section{Technical results}

\subsection{Calogero-Moser system for the $2\times 2$ formulation}

With

\begin{equation}
    U(t,x) = U_\infty + \sum_{j=1}^N \frac{A_j(t)}{x-\bar p_j(t)} + \sum_{j=1}^N \frac{A_j^*(t)}{x-p_j(t)},
\end{equation}
that

\begin{equation}
    |\nabla| U(x,t) = \sum_{j=1}^N \frac{i A_j(t)}{(x-\bar p_j(t))^2} - \sum_{j=1}^N \frac{i A_j^*(t)}{(x-p_j(t))^2},
\end{equation}
\begin{equation}
    \partial_t U(x,t) = \sum_{j=1}^N \left( \frac{\partial_t A_j(t)}{x-\bar p_j(t)} + \frac{\partial_t \bar p_j(t) A_j(t)}{(x-\bar p_j(t))^2} + \frac{\partial_t A_j^*(t)}{x-p_j(t)} + \frac{\partial_t p_j(t) A_j^*(t)}{(x-p_j(t))^2} \right).
\end{equation}
First, 

\begin{equation}
    [U_\infty,|\nabla| U] = i \sum_{j=1}^N \frac{[U_\infty,A_j(t)]}{(x-\bar p_j(t))^2} - i \sum_{j=1}^N \frac{[U_\infty,A_j^*(t)]}{(x-p_j(t))^2}.
\end{equation}
Now,

\begin{multline}
    [V,|\nabla| U] = \sum_{j=1}^N \sum_{k=1}^N \frac{1}{x-\bar p_j(t)} \frac{i}{(x-\bar p_k(t))^2} [A_j(t),A_k(t)] \\
    - \frac{1}{x-\bar p_j(t)} \frac{i}{(x-p_k(t))^2} [A_j(t),A_k^*(t)] + \frac{1}{x-p_j(t)} \frac{i}{(x-\bar p_k(t))^2} [A_j^*(t),A_k(t)] \\
    - \frac{1}{x-p_j(t)} \frac{i}{(x-p_k(t))^2} [A_j^*(t),A_k^*(t)].
\end{multline}

Also, 

\begin{equation}
    \frac{1}{(x-a)^2} \frac{1}{(x-b)} = \frac{1}{a-b} \frac{1}{(x-a)^2} + \frac{1}{(a-b)^2} \left( \frac{1}{x-b} - \frac{1}{x-a} \right).   
\end{equation}
We then obtain

\begin{equation}
    \partial_t A_j(t) = \left( \frac{-i}{2} \right) \sum_{k \neq j} \frac{2i [A_k(t),A_j(t)]}{(\bar p_k(t)-\bar p_j(t))^2} = \sum_{k \neq j} \frac{ [A_k(t),A_j(t)]}{(\bar p_k(t)-\bar p_j(t))^2}.
\end{equation}
and 

\begin{equation}
    \partial_t \bar p_j(t) A_j(t) = \left( \frac{-i}{2} \right) i [B_j(t),A_j(t)] = \frac{1}{2}[B_j(t),A_j(t)],
\end{equation}
where

\begin{equation}
    B_j(t)= U_\infty + \sum_{k\neq j} \frac{A_k(t)}{\bar p_j(t) - \bar p_k(t)} + \sum_{k=1}^N \frac{A_k^*(t)}{\bar p_j(t) - p_k(t)}.
\end{equation}

\section{Sobolev norms of rational functions}

\subsection{Proof of Lemma \ref{scalar}}\label{proof1}

	\begin{proof}
		We have that 
		\begin{equation}
			||\m(t)||_{L^2}^2 = \langle \m(t),\m(t) \rangle = \sum_{i,j=1}^N \frac{\s_i \bar \s_j}{x_i - \bar x_j} 2 i \pi \\
			= \sum_{i\neq j}^N \frac{\s_i \bar \s_j}{x_i - \bar x_j} 2 i \pi + \sum_{j=1}^N \frac{|\s_j|^2}{\im(x_j)} \pi.
		\end{equation}
		
		Now, the spins are bounded and $x_i - \bar x_j \rightarrow 0$ as $t \rightarrow \infty$, the result is obtained using standard calculus.

\subsection{Proof of Lemma \ref{technical}}\label{proof2}

	\end{proof}

    	\begin{proof}
		Formula \eqref{formulederiv} is obtained by a quick induction. We go on with the proof of \eqref{formule2}. It is obtained via changes of variables using the definition of the $L^2$ norm.
		
		\begin{multline}
			\langle \frac{\s_i}{(x-x_i)^{k+1}}, \frac{\s_i}{(x-x_i)^{k+1}} \rangle = \int_{-\infty}^\infty \frac{|s_i|^2}{|x-x_i|^{2k+2}} dx \\
			= \int_{-\infty}^\infty \frac{|s_i|^2}{((Re(x_i)-x)^2 + \im(x_i)^2)^{k+1}} dx = \int_{-\infty}^\infty \frac{|s_i|^2}{(z^2 + \im(x_i)^2)^{k+1}} dz \\
			= \frac{|s_i|^2}{\im(x_i)^{2k+1}} \int_{-\infty}^\infty \frac{1}{(z^2+1)^{k+1}} dz = \frac{|s_i|^2}{\im(x_i)^{2k+1}} \frac{\sqrt{\pi} \Gamma(k+\frac{1}{2})}{\Gamma(k+1)}.
		\end{multline}
		
		We now go on with the proof of \eqref{formule3}. For this, we use that $(x-x_i)$ and $(x-x_j)$ can not be both small at the same time, since $x_i-x_j$ is big. We assume without loss of generality that $Re(x_i) < Re(x_j)$. 
		
	On the interval $(-\infty,(Re(x_i)+Re(x_j))/2]$, we have that $|x-x_j| \geq |Re(x_i)-Re(x_j)|/2$. On the complementary, we have $|x-x_i| \geq |Re(x_i) - Re(x_j)|/2$. So we write
	
	\begin{multline}
		\langle \frac{\s_i}{(x-x_i)^{k+1}}, \frac{\s_j}{(x-x_j)^{k+1}} \rangle = \int_{-\infty}^\infty \frac{\s_i \bar \s_j}{(x-x_i)^{k+1} (x-\bar x_j)^{k+1}} dx \\
		= \int_{-\infty}^{\left(Re(x_i)+Re(x_j)\right)/2} \frac{\s_i \bar \s_j}{(x-x_i)^{k+1} (x-\bar x_j)^{k+1}}  dx \\
		+ \int_{\left(Re(x_i)+Re(x_j)\right)/2}^{\infty}\frac{\s_i \bar \s_j}{(x-x_i)^{k+1} (x-\bar x_j)^{k+1}}  dx,
	\end{multline}
		
	and 
	
	\begin{multline}
	\left|	\int_{-\infty}^{\left(Re(x_i)+Re(x_j)\right)/2} \frac{\s_i \bar \s_j}{(x-x_i)^{k+1} (x-\bar x_j)^{k+1}}  dx \right| \\ \leq \int_{-\infty}^\infty \frac{|s_i s_j |}{|x-x_i|^{k+1} \left((Re(x_j)-Re(x_j))/2 \right)^{k+1} } dx \\
	\leq C(k) \frac{|s_i s_j|}{\im(x_i)^k} \frac{1}{\left((Re(x_j)-Re(x_j))/2 \right)^{k+1} } \rightarrow 0.
	\end{multline}
		
	The other integral is similar.
		
	\end{proof}

\section{Convergence in Sobolev norm implies convergence in coefficients}\label{norm}

In this subsection, we establish that convergence in Sobolev norm to an "infinite-regime" implies convergence in coefficients. We consider two situations, one in which all the speeds are distinct, and a second one in which all the speeds are equal. In the second case, we must assume that the poles are different.

\begin{theorem}
    Let $\m(t,x)$ be a solution of the (HWM) equation, and be of the form
    \begin{equation}
        \m(t,x) = \m_0 + \sum_{j=1}^N \frac{A_j(t)}{x-x_j(t)} + \sum_{j=1}^N \frac{A_j^*(t)}{x - \bar x_j(t)},
    \end{equation}
    and we assume the existence of $G(t,x)$, an "infinite regime" function, of the form
    \begin{equation}
        G(t,x) = \m_0 + \sum_{j=1}^N \frac{C_j}{(x-v_j t) - y_j} + \sum_{j=1}^N \frac{C_j^*}{(x-v_j t) - \bar y_j},
    \end{equation}
    such that 
    \begin{equation}
        ||\m(t,x) - G(t,x) ||_{\dot H^{1/2}}^2 \xrightarrow[t \rightarrow \infty ]{} 0.
    \end{equation}
    and that the spins $A_j$ are bounded. Assuming either that all the speeds are different, or that all the speeds are equal
    \[
    (I):~\forall j\neq k,~v_j\neq v_k,\quad (II):~\forall j,k,~v_j=v_k=v,  
    \]
    then for every $j$, $A_j(t)$ has a limit and

    \begin{equation}
        A_j(t) \xrightarrow[t \rightarrow \infty ]{} C_j
    \end{equation}
    and the differences between the poles and their speed have a limit, and 
    \begin{equation}
        x_j(t)-v_j \xrightarrow[t \rightarrow \infty ]{} y_j.
    \end{equation}
    
\end{theorem}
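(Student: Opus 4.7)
The plan is to pass to the Fourier side, decouple the $N$ soliton contributions using the distinct asymptotic speeds (case (I)) or by testing against a suitable family of $\dot H^{-1/2}$ functions (case (II)), and then invoke the injectivity of the single-soliton map $(A,z)\mapsto A/(y-z)$ from $\mathbb{C}^3\times\{\im z>0\}$ into $\dot H^{1/2}$.

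First I would reduce to the upper-pole parts $V(t,x)=\sum_j A_j(t)/(x-x_j(t))$ and $W(t,x)=\sum_j C_j/(x-v_jt-y_j)$: since the Fourier transform of $1/(x-z)$ for $z$ in the upper (resp.\ lower) half-plane is supported in $\{\xi<0\}$ (resp.\ $\{\xi>0\}$), the $\dot H^{1/2}$ norm splits orthogonally, so it suffices to show that $\|V-W\|_{\dot H^{1/2}}\to 0$ implies convergence of the coefficients and of the pole shifts. Writing $\tilde x_j(t)=x_j(t)-v_jt$ and $h_j(t,\xi)=A_j(t)e^{-i\tilde x_j(t)\xi}-C_je^{-iy_j\xi}$, the Plancherel identity gives
\[
\|V-W\|_{\dot H^{1/2}}^2 = 2\pi\sum_{j,k}\int_{-\infty}^{0}|\xi|\,h_j(t,\xi)\cdot\overline{h_k(t,\xi)}\,e^{-i(v_j-v_k)t\,\xi}\,d\xi.
\]

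In case (I) the off-diagonal ($j\neq k$) terms oscillate at the non-zero phase speed $(v_j-v_k)t$. Provided $\im\tilde x_j(t)\geq\delta>0$ uniformly and $A_j(t)$ stays bounded, the integrand $|\xi|h_j\overline{h_k}$ together with its $\xi$-derivative is uniformly in $L^1(d\xi)$ with exponential decay as $\xi\to-\infty$, so one integration by parts in $\xi$ makes each off-diagonal piece $O(1/t)$; the remaining diagonal sum $\sum_j\|A_j(t)/(y-\tilde x_j(t))-C_j/(y-y_j)\|_{\dot H^{1/2}}^2$ therefore tends to $0$, and so does every summand. In case (II) I would instead work in the common moving frame $y=x-vt$, where $\tilde W(y)=\sum_j C_j/(y-y_j)$ is time-independent with distinct simple poles; testing $\tilde V(t,\cdot)-\tilde W$ against the $\dot H^{-1/2}$ family $\phi_w(y)=(y-w)^{-2}$ (with $\im w>0$) yields by contour integration $\langle\tilde V-\tilde W,\phi_w\rangle=2\pi i[\sum_j A_j(t)/(\tilde x_j(t)-\bar w)^2-\sum_j C_j/(y_j-\bar w)^2]\to 0$ for every such $w$, and the rigidity of rational functions with $N$ distinct simple poles (matching poles and residues of the limit) forces $(\tilde x_j(t),A_j(t))\to(y_j,C_j)$ along subsequences, up to relabeling.

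For the single-soliton rigidity, suppose $\|A_n/(y-z_n)-C/(y-y_0)\|_{\dot H^{1/2}}\to 0$ with $|A_n|$ bounded and $C\neq 0$; then $(A_n,z_n)$ stays in a compact subset of $\mathbb{C}^3\times\{\im z>0\}$. Indeed $\im z_n\to 0$ would force $\|A_n/(y-z_n)\|_{\dot H^{1/2}}^2=\pi|A_n|^2/(2\im^2 z_n)\to\infty$ unless $A_n\to 0$, contradicting $C\neq 0$; and $|\re z_n|\to\infty$ would make the cross term $A_n\cdot\bar C/(z_n-\bar y_0)^2\to 0$, leaving the strictly positive limit $\|A/(y-z_n)\|^2+\|C/(y-y_0)\|^2>0$. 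Any subsequential limit $(A_\infty,z_\infty)$ must then realise a zero norm, whence $A_\infty=C$ and $z_\infty=y_0$; uniqueness of the limit upgrades subsequential convergence to convergence of the full sequence.

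The main obstacle is the uniform lower bound $\im x_j(t)\geq\delta>0$ used above. I would establish it along the lines of the proof of $(C_1)\Rightarrow(C_3)$ in Section~\ref{sec:local}: evaluating the sphere identity $|\m(t,x)|=1$ at $x=\re x_j(t)$ and invoking the constraint \eqref{CONSS} forces $\im x_j(t)\gtrsim|\s_j(t)|$ modulo bounded interaction terms, so $\im x_j(t_n)\to 0$ along a subsequence can only occur jointly with $A_j(t_n)\to 0$, which contradicts the rigidity step above whenever $C_j\neq 0$; the degenerate case $C_j=0$ only requires $A_j(t)\to 0$ and places no constraint on $\tilde x_j$, consistent with the statement of the theorem.
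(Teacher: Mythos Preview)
Your decoupling step in case (I) has a circularity. The integration-by-parts bound on the off-diagonal term $\int_{-\infty}^{0}|\xi|\,h_j\overline{h_k}\,e^{-i(v_j-v_k)t\xi}\,d\xi$ produces not $O(1/t)$ but $O((1+|\tilde x_j(t)|+|\tilde x_k(t)|)/t)$, because $\partial_\xi h_j$ carries the factor $\tilde x_j(t)=x_j(t)-v_jt$. For this to vanish you need $\tilde x_j(t)=o(t)$, i.e.\ you need to already know that the pole labelled $j$ travels with asymptotic speed $v_j$; nothing in your argument establishes this. If, say, $x_j$ actually moves at speed $v_k$, then $\tilde x_j(t)\sim(v_k-v_j)t$ and your off-diagonal terms stay $O(1)$. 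The same boundedness of $\tilde x_j$ (and the resulting pole separation $|x_j-x_k|\gtrsim t$) is also what makes the ``bounded interaction terms'' in your last paragraph actually bounded, so the lower bound on $\im x_j(t)$ is circular as written.

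The paper breaks this circularity by using the HWM dynamics before touching the norm. From the Lax relation $X(t)\sim X(0)+tL(0)$ and the assumption that $L(0)$ has simple spectrum, Gershgorin applied to $P^{-1}X(0)P+tD$ gives $x_j(t)=v_jt+O(1)$ directly; the Calogero--Moser equations then yield convergence of $s_j(t)$ and of $x_j(t)-v_jt$ by an $L^1$-in-time bound on the forcing. Only after these limits exist does the paper expand $\|M-G\|_{\dot H^{1/2}}^2$ (the cross terms now trivially $o(1)$) and use a convexity/parallelogram identity on the surviving diagonal terms to force $x_j(\infty)=y_j$, $A_j(\infty)=C_j$. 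Your single-soliton rigidity step would be a clean substitute for that final identification, but it cannot replace the dynamical input that pins each $x_j$ to its correct speed.
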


\begin{proof} We focus on case $(I)$, as $(II)$ will be done later.

\textbf{Case $(I)$:}\\
    We use the time evolution equation
    \begin{equation}
        \dot s_j(t) = 2i \sum_{k\neq j}^N \frac{s_j(t) \times s_k(t) }{(x_j(t)-x_k(t))^2}.
    \end{equation}
    Since all the speeds are different by assumptions, $L(0) = P D P^{-1}$, where $D=diag(v_1,\dots,v_N)$.
    
    Since $X(t) = U(t) (X(0) + t L(0)) U(t)^{-1}$, 
    \begin{equation}
        X(t) = U(t) P^{-1} ( P X(0) P^{-1} + t D ) P U(t)^{-1}.
    \end{equation}
    The poles are the eigenvalues of $(PX(0)P^{-1} + t D)$, which is a diagonal dominant matrix, as the diagonal coefficients are $v_i t + (PX(0)P^{-1})_{i,i} $, and the other coefficients are given by $PX(0)P^{-1}_{i,j}$ and do not depend on $t$. Thus,
    \begin{equation}
        x_j(t) \in \mathcal{B} (tv_j,R),~ R=\max_j \sum_{k} \left| (PX(0)P^{-1})_{j,k} \right|.
    \end{equation}
    If for some $j$, $v_j=0$, applying the same reasoning to $X(t)+3t I_N \sim PX(0)P^{-1}+t(D+3I_N)$ gives $x_j(t)+3t = t(v_j+3) + O(1)$, in all cases 
    \[
    x_j(t) = t v_j + O(1).
    \]
    We thus obtain
    \begin{equation}
        |x_j(t) - x_k(t)| \geq |v_j - v_k| t - 2 R,
    \end{equation}
    so 
    \begin{equation}
        \left| \partial_t |s_j(t)| \right| \leq 2 \sum_{k\neq j}^N \frac{|s_j(t)| |s_k(t)|}{(x_j(t)-x_k(t))^2} \leq 2 ||s||_\infty \sum_{k\neq j} \frac{|s_j(t)|}{(|v_j-v_k|t  -2R)^2}.
    \end{equation}
    Since the right hand side belongs to $L_{1,t}$, then $s_j(t)$ converges to $s_j(\infty)$. Since the equation can be put into the form with $\eta = \min (|v_j-v_k|)$,
    \begin{equation}
        \left| \partial_t |s_j(t)| \right| \leq C \frac{|s_j(t)|}{(\eta t)^2},
    \end{equation}
    so in particular for $t_2 \geq t_1$,
    \begin{equation}
        \left| \int_{z=t_1}^{t_2} \frac{\partial_t |s_j(z)|}{|s_j(z)|} dz \right| \leq  \frac{C}{\eta^2} \left( \frac{1}{t_1} - \frac{1}{t_2} \right),~ \left| \ln\left( \frac{s_j(t_2)}{s_j(t_1)}\right) \right| \leq \frac{C}{t_1},
    \end{equation}
    so $s_j(\infty) \neq 0$.
    
    Now, we look at the time evolution equation for the poles. 
    \begin{equation}
        \ddot x_j(t) = -4 \sum_{k\neq j} \frac{s_j(t)\times s_k(t)}{(x_j(t)-x_k(t))^3},
    \end{equation}
    so 
    \begin{equation}
        \left| \ddot x_j(t) \right| \leq 4 \sum_{k\neq j} \frac{||s||_\infty^2}{(\eta t - 2 R)^3} \leq \frac{C}{t^3},
    \end{equation}
    so $\dot x_j(t) \rightarrow \dot x_j(\infty) = v_j$ since $x_j(t)\sim v_j t$. We then obtain
    \begin{equation}
        \left| \int_{z = t_1}^{t_2} \ddot x_j(z) dz \right| = |\dot x_j (t_2) - \dot x_j(t_1)| \leq C \left( \frac{1}{t_1^2} - \frac{1}{t_2^2} \right),
    \end{equation}
    taking $t_2 \rightarrow \infty$ gives 
    \begin{equation}
        |v_j - \dot x_j(t_1)| \leq \frac{C}{t_1^2}.
    \end{equation}
    Finally, the function $x_j(t) - v_j t$ has a derivative in $L^1(\mathbb{R})$, so it converges as $t \rightarrow \infty$, 
    \begin{equation}
        x_j(t) - v_j t \rightarrow x_j(\infty).
    \end{equation}
    Now, we consider the norm of the difference between the solution and the function it is scattering to, with $y_j(t) = y_j + t v_j$,
    \begin{multline}
        ||G(t,x)-\m(t,x)||_{\dot H^{1/2}}^2 = -4 \pi \sum_{j,k=1}^N \frac{\langle A_j(t),A_k(t) \rangle}{(x_j(t)-\bar x_k(t))^2} \\
        +4 \pi \sum_{j,k=1}^N \frac{\langle A_j(t),C_k \rangle  }{(x_j(t)-\bar y_k(t))^2} +4\pi \sum_{j,k=1}^N \frac{\langle C_j,A_k(t) \rangle  }{(y_j(t)-\bar x_k(t))^2}  -4 \pi \sum_{j,k=1}^N \frac{\langle C_j,C_k \rangle}{(y_j(t)-\bar y_k(t))^2}.
    \end{multline}
    For any $j \neq k$, the cross terms are $o(1/t^2)$ since the numerators are bounded and the speeds are different. Hence
    \begin{multline}
        ||G(t,x)-\m(t,x)||_{\dot H^{1/2}}^2 = o(1/t^2) + \pi \sum_{j=1}^N \frac{\langle A_j(t),A_j(t)\rangle}{Im(x_j(t))^2} +  \pi \sum_{j=1}^N \frac{\langle C_j,C_j \rangle}{Im(y_j)^2}  \\
        - \pi \sum_{j=1}^N \frac{\langle C_j,A_j(t) \rangle }{(y_j(t) -\bar  x_j(t))^2} -\pi \sum_{j=1}^N \frac{\langle A_j,C_j(t) \rangle }{(x_j(t) - \bar y_j(t))^2}.
    \end{multline}
    As $t \rightarrow \infty$,
    \begin{multline}
        ||G(t,x)-\m(t,x)||_{\dot H^{1/2}}^2 \rightarrow   \pi \sum_{j=1}^N \frac{\langle A_j(\infty),A_j(\infty)\rangle}{Im(x_j(\infty))^2} +  \pi \sum_{j=1}^N \frac{\langle C_j,C_j \rangle}{Im(y_j)^2}  \\
        - \pi \sum_{j=1}^N \frac{\langle C_j,A_j(\infty) \rangle }{(y_j -\bar  x_j(\infty))^2} -\pi \sum_{j=1}^N \frac{\langle A_j(\infty),C_j \rangle }{(x_j(\infty) - \bar y_j)^2}=0.
    \end{multline}
    Since we have by convexity for $a,b >0$, (with equality for $a=b$)
    \begin{equation}
        \frac{1}{(a+b)^2} \leq \frac{1}{8} \left( \frac{1}{a^2}+\frac{1}{b^2} \right),
    \end{equation}
    and using the parallelogram identity with $|x_j-\bar y_j| = |\bar x_j - y_j|$, 
    \begin{equation}
        2 |x_j - \bar y_j|^2 = 8 (Im(x_j)+Im(y_j))^2 + 8 (Re(x_j)-Re(y_j))^2,
    \end{equation}
    so 
    \begin{equation}
        \frac{1}{|x_j-\bar y_j|^2} \leq \frac{1}{2} \left( \frac{1}{Im(x_j)^2} + \frac{1}{Im(y_j)^2} \right).
    \end{equation}
    with equality for $Re(x_j) = Re(y_j)$. We deduce That for any $j$, $Re(x_j) = Re(y_j)$ and $Im(x_j)=Im(x_j)$, and that 
    \begin{multline}
        \frac{\langle A_j(\infty),A_j(\infty) \rangle}{ Im(x_j(\infty))^2 } +\frac{\langle C_j,C_j \rangle}{ Im(y_j(\infty))^2 } - \frac{\langle C_j,A_j(\infty) \rangle}{(y_j - \bar x_j(\infty))^2} - \frac{\langle A_j(\infty),C_j\rangle }{(x_j(\infty)-\bar y_j)^2} \\
        = \frac{1}{Im(x_j(\infty))^2}  \left( \langle A_j(\infty),A_j(\infty) \rangle + \langle C_j,C_j \rangle - 2 Re \langle A_j(\infty),C_j \rangle  \right) \geq 0,
    \end{multline}
    so for the norm to be zero, the terms must all be zero and 
    \begin{equation}
        (\langle A_j(\infty),A_j(\infty) \rangle^{1/2} - \langle C_j,C_j \rangle^{1/2}  )=0,
    \end{equation}
    so $||A_j(\infty)|| = ||C_j||$, and equality case in $\langle A_j(\infty_,C_j \rangle$ gives $A_j(\infty) = C_j$.
    
\end{proof}

Before doing the case $v_i = v_j$ for $i = j$, we need the following lemma.

\begin{lemma}\label{isolated}
    Let $F(t,x)$ be a rational function of the form
    \begin{equation}
        F(t,x) = \sum_{j,k=1}^N \frac{\langle A_j, A_k \rangle}{(x_j(t)-\bar x_k(t))^2},
    \end{equation}
    such that 
    \begin{equation}
        \langle F(t,x),|\nabla| F(t,x) \rangle \xrightarrow[t \rightarrow \infty]{} 0.
    \end{equation}
    Then, if $i_0$ is such that $x_{i_0}$ is isolated (or up to a subsequence), i.e.
    \begin{equation}
        |x_{i_0}(t_p) - x_j(t_p)|\geq \delta >0,
    \end{equation}
    for some $\delta >0$,
    then 
    \begin{equation}
        \langle A_{i_0}(t_{\phi(p)}),A_{i_0}(t_{\phi(p)}) \rangle \xrightarrow[p\rightarrow \infty]{} 0.
    \end{equation}
    
\end{lemma}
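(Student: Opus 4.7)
The plan is to use the explicit formula for $\langle F, |\nabla| F\rangle = \|F\|^2_{\dot H^{1/2}}$, which has already been expanded in the paper as (a constant times) the double sum over pole pairs with denominators $(x_j - \bar x_k)^2$. Isolating the $(i_0, i_0)$ diagonal contribution and exploiting the isolation of $x_{i_0}$ to control cross terms will do most of the work; a compactness-plus-residue argument will then close the delicate case where $\im(x_{i_0})$ stays bounded below.

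First I would pass to a sub-subsequence $t_{\phi(p)}$ along which all the spins $A_j$, all the imaginary parts $\im(x_j)$ (which lie in $[0,M]$ by conservation of $\sum_j \im(x_j)$), and all differences $x_j - x_{i_0}$ converge in their extended domains. The isolation assumption passes to the limit: $|\lim (x_j - x_{i_0})| \geq \delta$ for $j \neq i_0$. Since both imaginary parts are non-negative,
\[ |x_{i_0} - \bar x_j|^2 = |x_{i_0} - x_j|^2 + 4\,\im(x_{i_0})\im(x_j) \geq \delta^2, \]
so every cross term in the expansion of the squared norm that involves the index $i_0$ is bounded by $C|A_{i_0}|/\delta^2$ uniformly, using the assumed boundedness of the spins.

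Splitting $F = f_{i_0} + F_{-i_0}$ with $f_{i_0}(x) = A_{i_0}/(x-x_{i_0})$, the norm decomposes as
\[ \|F\|^2_{\dot H^{1/2}} = \frac{\pi |A_{i_0}|^2}{2\,\im(x_{i_0})^2} + \|F_{-i_0}\|^2_{\dot H^{1/2}} + R, \qquad |R| \leq C|A_{i_0}|/\delta^2. \]
Since $\|F_{-i_0}\|^2 \geq 0$ and the hypothesis gives $\|F\|^2 \to 0$, this yields
\[ \frac{\pi |A_{i_0}|^2}{2\,\im(x_{i_0})^2} \leq C|A_{i_0}|/\delta^2 + o(1). \]
In the subcase $\im(x_{i_0}(t_{\phi(p)})) \to 0$ the right-hand side stays bounded while the left explodes unless $|A_{i_0}(t_{\phi(p)})| \to 0$, so this subcase is handled directly.

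The main obstacle is the remaining subcase $\im(x_{i_0}(t_{\phi(p)})) \to I^* > 0$, in which the estimate above only forces $|A_{i_0}|$ to be bounded. I would close this case by a Cauchy-residue argument: after translating $x$ by $\re(x_{i_0}(t_{\phi(p)}))$ (an isometry of $\dot H^{1/2}$), the pole $x_{i_0}(t_{\phi(p)})$ tends to $iI^*$ in the upper half-plane, and for $p$ large the circle $\Gamma = \{|\zeta - iI^*| = \delta/3\}$ encloses $x_{i_0}(t_{\phi(p)})$ as the only enclosed pole of $F(t_{\phi(p)}, \cdot)$ (all others being at distance at least $\delta/2$ from $\Gamma$ for large $p$). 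The residue theorem then gives $A_{i_0}(t_{\phi(p)}) = \frac{1}{2\pi i}\oint_\Gamma F(t_{\phi(p)}, \zeta)\, d\zeta$, and pointwise evaluation at $\zeta \in \Gamma$ (uniformly away from all poles) defines a continuous functional on the relevant Hardy-type subspace of $\dot H^{1/2}$, with constant depending only on $\delta$ and $I^*$. Consequently, $\|F(t_{\phi(p)})\|_{\dot H^{1/2}} \to 0$ forces $F(t_{\phi(p)}, \zeta) \to 0$ uniformly on $\Gamma$, and hence $A_{i_0}(t_{\phi(p)}) \to 0$. Making this continuity of pointwise evaluation quantitative and uniform along the subsequence, in the presence of other poles of $F_{-i_0}$ that may approach the real axis but remain at distance at least $\delta$ from $x_{i_0}$, is the technical heart of the argument.
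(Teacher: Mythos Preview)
Your second subcase contains a genuine gap that is essentially the whole lemma. Point evaluation at $\zeta\in\Gamma$, a circle in the \emph{upper} half-plane, is \emph{not} a bounded functional on $\Pi_-(\dot H^{1/2})$: functions in that space extend holomorphically only to the lower half-plane, and there is no general inequality $|F(\zeta)|\le C\|F\|_{\dot H^{1/2}}$ for $\im\zeta>0$. You are implicitly relying on the finite rational structure, but even there the putative constant would have to depend on the full configuration of the \emph{other} poles $x_j$, $j\neq i_0$, which may collide with one another or approach the real axis; nothing in your setup controls this. You flag exactly this step as ``the technical heart,'' but supplying it amounts to proving the lemma by other means. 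In particular, the residue identity $A_{i_0}=\frac{1}{2\pi i}\oint_\Gamma F$ is of course true, but passing from $\|F\|_{\dot H^{1/2}}\to 0$ to $F|_\Gamma\to 0$ is not justified.

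The paper avoids this altogether by working in physical space with the explicit kernel $|\nabla|^{1/2}\bigl(A_j/(x-x_j)\bigr)=c\,A_j/(x-x_j)^{3/2}$. After centering at $\re(x_{i_0})$ it restricts the $L^2$ norm to a fixed interval, splits the remaining poles into those escaping to infinity (whose contribution is uniformly $o(1)$ on the interval) and those staying bounded, passes to the limit by dominated convergence, and obtains a pointwise identity $\sum_{k\in K}A_k^\infty/(z_k^\infty-x)^{3/2}=0$ on that interval. Differentiating at $x=0$ yields a Vandermonde system after grouping coincident limits; the isolation hypothesis makes the class of $i_0$ a singleton, forcing $A_{i_0}^\infty=0$. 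Your first subcase $\im(x_{i_0})\to 0$ is a clean shortcut, but note that the paper's argument handles both subcases uniformly without the split.
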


\begin{proof}

    We start by assuming the existence of $t_p \rightarrow \infty$ such that $|x_{i_0}(t_p)-x_j(t_p)|\geq \delta$ for any $p$ and $j\neq i_0$. We assume $i_0=1$. We have 

    \begin{multline}
        \langle F(t,x), |\nabla| F(t,x) \rangle = \sum_{j,k=1}^N \frac{\langle A_j(t),A_k(t) \rangle}{(x_j(t)-\bar x_k(t))^2} \\
        = \langle F(t,x-Re(x_1(t))),|\nabla| F(t,x-Re(x_1(t))) \rangle.
    \end{multline}
    
    We will distinguish two cases for the other poles $x_j$, $j \neq 1$. Either there exists a subsequence, such that $|x_j-x_1| \rightarrow \infty$, or $|x_j-x_1|$ is bounded. In the second case, we deduce that $x_j-x_1$ converges up to a subsequence. Call $J$ the first set of poles and $K$ the second one, and we consider a diagonal subsequence $\phi$ such that 
    \begin{equation}
    \left\{
        \begin{aligned}
            &|z_j|=|x_j-Re(x_1)| \rightarrow \infty,~ j \in J,\\
            &z_j=x_j-Re(x_1) \rightarrow z_j^\infty,~ j \in K. 
        \end{aligned}
    \right.
    \end{equation}
    Since we have by Plancherel's identity that $\langle F, |\nabla| F \rangle = \langle |\xi|^{1/2} \hat F, |\xi|^{1/2} \hat F \rangle = \langle |\nabla|^{1/2} F,|\nabla|^{1/2} F \rangle $, we look at $|\nabla|^{1/2} F$. With $S_j = \frac{A_j}{x-x_j}$, 
    \begin{equation}
        \mathcal{F}(S_j)(\xi) = \left\{
            \begin{aligned}
                &0,~ \xi >0,\\
                &A_j \cdot 2 i \pi e^{-2 i \pi \xi x_j},~ \xi\leq 0.
            \end{aligned}
        \right.
    \end{equation}
    Hence, 
    \begin{equation}
        |\nabla|^{1/2} \frac{A_i}{x-x_i} = c_{\nabla}\int_{\xi < 0 } e^{2 i \pi x \xi} \left( |\xi|^{1/2} A_i 2 i \pi e^{-2 i \pi \xi x_i} \right) dx = \frac{c_\nabla A_i}{4 \sqrt{2} \pi i^{3/2} (x-x_i)^{3/2} }.
    \end{equation}
    We prefer this formulation, as it is then possible to localize the contributions of the different solitons near their poles. We have for the Lebesgue norm 
    \begin{equation}
        || |\nabla| F(t,x) ||^2_{L^2} = \int_{x= -\infty}^\infty \left| \sum_{j=1}^N \frac{d_\nabla A_j}{(x-x_j)^{3/2}} \right| dx
    \end{equation}
    We now center at $x=Re(x_1)$ and distinguish the two cases for the poles. We also cut the integral to $[-1,1]$. 

    \begin{equation}
        || |\nabla|^{1/2} F(t,x) ||_{L^2}^2 \geq d_\nabla \int_{-1}^1 \left| \sum_{j\in J} \frac{A_j}{(x-x_j)^{3/2}}  + \sum_{j\in K}  \frac{A_j}{(x-x_j)^{3/2}} \right|^2 dx,
    \end{equation}
    where $x_1 \in K$ and $z_1 = x_1 - Re(x_1)$ by convention. With $\phi(p)$ such that $|z_j| \geq p $ for $j \in J$, then for $x \in [-1,1]$ and $j \in J$

    \begin{equation}
        \left| \frac{A_j }{(z_j-x)^{3/2}} \right| \leq  \frac{C}{p^{3/2}}.
    \end{equation}
    In particular, $F=F_1+F_2$ where $F_2$ converges uniformly to $0$ on $[-1,1]$, so $F_1$ must converge to $0$ in $L^2$ norm as well. Hence,

    \begin{equation}
          \int_{-1}^1 \left|  \sum_{j\in K} \frac{A_j }{(z_j-x)^{3/2}} \right|^2 dx \rightarrow 0.
    \end{equation}
    Now, $z_j \rightarrow z_j^\infty$ and $A_j \rightarrow A_j^\infty$, so we can apply dominated convergence for $p$ big enough and obtain

    \begin{equation}
        \int_{-1}^1 \left|  \sum_{j\in K} \frac{A_j }{(z_j-x)^{3/2}} \right|^2 dx \rightarrow \int_{-1}^1 \left|  \sum_{j\in K} \frac{A_j^\infty }{(z_j^\infty-x)^{3/2}} \right|^2 dx =0.
    \end{equation}
    In particular, 
    \begin{equation}
        \sum_{j\in K} \frac{A_j^\infty }{(z_j^\infty-x)^{3/2}} =0 
    \end{equation}
    almost everywhere on $[-1,1]$, and is continuous so everywhere on $[-1,1]$. Differentiating $k$ times at $x=0$ gives 
    \begin{equation}
        \sum_{j\in K} \frac{A_j^\infty}{(x_j)^{1/2}} \frac{1}{(x_j)^k} = 0.
    \end{equation}
    By regrouping the terms, $j \sim i$ when $x_i=x_j$, we know that $[1]=\{ 1\}$ and obtain the invertible Van-Der Monde determinant for $k \geq 1$
    \begin{equation}
        \sum_{[j]\in K/\sim} \left( \sum_{k \in [j]} \frac{A_k^\infty}{(x_k)^{3/2}} \right) \frac{1}{(x_{j})^{3/2}} = 0.
    \end{equation}
    This implies in particular that for any $j$
    \begin{equation}
        \sum_{k\in [j]} \frac{A_k^\infty}{(x_k)^{3/2}} =0,~ \Rightarrow A_1^\infty = 0.
    \end{equation}
    
\end{proof}

This theorem allows us to prove the more applicable corollary.

\begin{corollary}\label{bij}
    Let $F$ be a rational function of the form
    \begin{equation}
        F(t,x) = \sum_{j=1}^N \frac{A_j(t)}{x-x_j(t)},
    \end{equation}
    with bounded spins and such that $F$ scatters to some traveling function with distinct poles in Sobolev norm, i.e.
    \begin{equation}
        \left|\left| F(t,x) - \sum_{j=1}^N \frac{C_j}{x-v t - y_j} \right|\right|_{\dot H^{1/2}} \rightarrow 0,
    \end{equation}
    then, up to a subsequence, there exists a bijection $\phi: \{1,\dots,N\} \to \{1 ,\dots ,N\}$ such that each pole $x_j$ converges to the pole $vt+y_{\phi(j)}$, or more precisely
    \begin{equation}
        x_j(t) - v t - y_{\phi(j)} \rightarrow 0,
    \end{equation}
    and the corresponding spin also converges to the corresponding spin
    \begin{equation}
        A_j(t) \rightarrow C_{\phi(j)}.
    \end{equation}
\end{corollary}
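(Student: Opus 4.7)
The plan is to match poles and residues in three stages: extract a convergent subsequence in the traveling frame, use Lemma~\ref{isolated} together with a counting argument to produce a bijection, and finally identify the residues along matched pairs.

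First, I pass to the traveling frame by setting $\tilde{x}_j(t) = x_j(t) - vt$, so that the hypothesis reads $\|\sum_j A_j(t)/(x-\tilde{x}_j(t)) - \sum_k C_k/(x-y_k)\|_{\dot H^{1/2}} \to 0$. Since the $A_j$ are bounded and the Riemann sphere $\overline{\mathbb{C}}$ is compact, I extract a subsequence $t_p \to \infty$ along which $A_j(t_p) \to A_j^\infty \in \mathbb{C}^3$ and $\tilde{x}_j(t_p) \to \tilde{x}_j^\infty \in \overline{\mathbb{C}}$ for every $j$. Boundedness of the conserved $L^2$-norm (which contains the term $\pi \sum_j |\s_j|^2/\operatorname{Im}(x_j)$, see Lemma~\ref{scalar}) forces $\operatorname{Im}(\tilde{x}_j(t))$ to stay bounded below whenever $|A_j(t)|$ does, so the degenerate case $\operatorname{Im}(\tilde{x}_j^\infty)=0$ necessarily has $A_j^\infty = 0$ and may be treated as an absent pole.

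Next, I apply Lemma~\ref{isolated} to the $2N$-pole rational function $H(t,x) = \sum_j A_j(t)/(x-\tilde{x}_j(t)) - \sum_k C_k/(x-y_k)$, whose $\dot H^{1/2}$-norm vanishes in the limit. Its conclusion yields two facts: (i) any $\tilde{x}_j$ whose limit is $\infty$, or is distinct from every $y_k$ and every other $\tilde{x}_{j'}^\infty$, is eventually isolated in $H$, hence $A_j^\infty = 0$; (ii) if no $\tilde{x}_j^\infty$ equals $y_k$, then since the $y_{k'}$'s are pairwise distinct by hypothesis, $y_k$ is isolated in $H$, which forces $C_k = 0$ and contradicts $y_k$ being a genuine pole of $G$ (so we may assume $C_k \neq 0$). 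Setting $J_\star = \{j : \tilde{x}_j^\infty \in \{y_1,\dots,y_N\}\}$ and $\phi : J_\star \to \{1,\dots,N\}$ by $\phi(j)=k$ when $\tilde{x}_j^\infty = y_k$, fact (ii) makes $\phi$ surjective, so $|J_\star|\geq N$; combined with $|J_\star|\leq N$ this forces $J_\star = \{1,\dots,N\}$ and, by pigeonhole, $\phi$ is a bijection. Surjectivity simultaneously rules out clustering (two distinct $\tilde{x}_j$'s converging to the same $y_k$) and escape to infinity, since either would leave some $y_{k'}$ unhit.

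To close the proof, fix $j$ and set $k=\phi(j)$; by the previous step every other pole of $H$ converges along $t_p$ to a point at distance at least $\delta>0$ from $y_k$. Mimicking the $[-1,1]$-localization in the proof of Lemma~\ref{isolated} around $y_k$, only the contributions of $\tilde{x}_j$ and $y_k$ survive in the limit, and one obtains the pointwise vanishing of $(A_j^\infty - C_k)/(x-y_k)^{3/2}$ on a neighborhood of $y_k$, so $A_j^\infty = C_{\phi(j)}$. The main obstacle is precisely this last step: adapting the Vandermonde argument of Lemma~\ref{isolated} to a situation where two poles genuinely merge, so that the conclusion becomes a residue identity rather than individual vanishing of each coefficient. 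Every other part of the argument reduces cleanly to isolation plus counting.
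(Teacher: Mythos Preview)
Your argument is correct and follows essentially the same route as the paper: apply Lemma~\ref{isolated} to the $2N$-pole difference, use $C_k\neq 0$ to see that each $y_k$ cannot be isolated, count to produce the bijection, and then localize as in Lemma~\ref{isolated} to identify residues. Two minor remarks. First, your detour through $L^2$-conservation to dispose of the case $\operatorname{Im}(\tilde{x}_j^\infty)=0$ is both unjustified (the corollary does not assume $F$ solves \eqref{HWMS}) and unnecessary: your own counting argument already forces $J_\star=\{1,\dots,N\}$, which rules out any $\tilde{x}_j^\infty$ landing on the real axis or at $\infty$. Second, your stated ``main obstacle'' is not one: the equivalence-class grouping in the proof of Lemma~\ref{isolated} already handles merging poles and delivers $\sum_{i\in[j]}A_i^\infty/(z_i^\infty)^{3/2}=0$ for each class, which in the present situation (class $=\{\tilde{x}_j,\,y_{\phi(j)}\}$) reads exactly $(A_j^\infty-C_{\phi(j)})/y_{\phi(j)}^{3/2}=0$. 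The paper invokes this in the same way, phrasing it as the Vandermonde system $\sum_j (A_{\psi(j)}^\infty-C_j)y_j^{k-1}=0$.
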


\begin{proof}
    We will use the previous theorem, we start by defining the function 

    \begin{equation}
        G(t,x) = F(t,x) - \sum_{j=1}^N \frac{C_j}{x-vt - y_j},
    \end{equation}
    and by assumption 
    \begin{equation}
        ||G(t,x)||_{\dot H^{1/2}} \rightarrow 0.
    \end{equation}
    We remark that the corresponding spin to each $vt+y_j$ is $C_j$, which is constant and non-zero, so it does not converge to $0$. Hence, Lemma \ref{isolated} states that the pole can not be isolated, so there exists $\psi(j)$ (that will be $\phi^{-1}$) such that 
    \begin{equation}
        |x_{\psi(j)} - vt - y_j| \rightarrow 0.
    \end{equation}
    $\psi$ is indeed a bijection. Indeed, $\psi$ always chose poles $x_j$ and not $y_k$, because the distances between the $y_k$ are constant. Also, $\psi$ has to be surjective, as any pole $y_k$ can't be isolated at $+\infty$. Hence, by cardinality, $\psi$ is a bijection and we call $\phi$ its inverse.

    Now, using a similar argument as in the proof of Lemma \ref{isolated}, we regroup poles and obtain by dominated convergence the Vandermonde system up to a subsequence of times,
    \begin{equation}
        \sum_{j=1}^N \left( \frac{A_{\psi(j)}^\infty-C_j}{y_j} \right) y_j^k = 0,
    \end{equation}
    which gives $A_j^\infty = C_{\phi(j)} $ for all $j$.
    
\end{proof}


    \begin{theorem}

        Let 
        \[
        \m(t,x) = \m_0 + \sum_{j=1}^N \frac{A_j(t)}{x-x_j(t)} + \sum_{j=1}^N \frac{A_j^*(t)}{x-\bar x_j(t)},
        \]
        such that $X(t)=U(t) \left[ X(0)+t L(0) \right]U^{-1}(t)$. Then, assuming $A_j$ bounded, we have
        \[
        \left|\left| \m(t,x)-\m_0 \right|\right|_{\dot H_x^{1/2}} \xrightarrow[t\rightarrow 0]{} 0 \quad \Rightarrow \quad A_j(t) \xrightarrow[t\rightarrow 0]{} 0.
        \]

    \end{theorem}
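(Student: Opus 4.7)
The plan is to express $\|\m(t,\cdot)-\m_0\|_{\dot H^{1/2}}^2$ as a Hermitian quadratic form in the residues $A_j(t)$ whose Gram matrix depends continuously on the pole locations $x_j(t)$, and then to use the structural formula $X(t)=U(t)[X(0)+tL(0)]U^{-1}(t)$ to keep the poles in a compact region of the upper half-plane at mutual positive distance as $t\to 0$. Coercivity of the Gram matrix then forces the residues to vanish in that limit.

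First, since $\operatorname{Im} x_j(t)>0$, the Fourier transform of $\frac{A_j(t)}{x-x_j(t)}$ is supported on $\xi<0$ while that of $\frac{A_j^*(t)}{x-\bar x_j(t)}$ is supported on $\xi>0$, so the two half-line contributions to $\int |\widehat{\m-\m_0}|^2|\xi|\,d\xi$ decouple. A direct residue computation, analogous to Lemma \ref{scalar} but with the extra factor $|\xi|$ in the integrand, yields an explicit formula of the form
\begin{equation*}
\|\m(t,\cdot)-\m_0\|_{\dot H^{1/2}}^2 = -4\pi\, \operatorname{Re} \sum_{j,k=1}^N \frac{\langle A_j(t),A_k(t)\rangle}{(x_j(t)-\bar x_k(t))^2}.
\end{equation*}
Setting $G(t)_{jk} = -\frac{4\pi}{(x_j(t)-\bar x_k(t))^2}$, whose diagonal entries $\frac{\pi}{\operatorname{Im} x_j(t)^2}$ are strictly positive, the right-hand side is a Hermitian form on the tuple $(A_1(t),\ldots,A_N(t))$ with Gram matrix $G(t)$.

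Second, the relation $X(t)=U(t)[X(0)+tL(0)]U^{-1}(t)$ identifies $\{x_j(t)\}$ with the spectrum of $X(0)+tL(0)$; at $t=0$ this spectrum is exactly the set of simple poles $\{x_j(0)\}$, assumed distinct and in the upper half-plane by the rational ansatz. Standard matrix perturbation theory then gives $x_j(t)\to x_j(0)$ as $t\to 0$ with uniform lower bounds on $\operatorname{Im} x_j(t)$ and on $|x_j(t)-x_k(t)|$ for $j\neq k$ on a neighbourhood of $0$. The Gram matrix $G(t)$ therefore converges entrywise to $G(0)$, which is itself the Gram matrix of $\{\frac{1}{x-x_j(0)}\}_{j=1}^N$ in $\dot H^{1/2}(\mathbb R)$; since these $N$ rational functions have distinct simple poles they are linearly independent, so $G(0)$ is positive definite.

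By continuity the smallest eigenvalue of $G(t)$ remains bounded below by some $c_0>0$ for $t$ in a neighbourhood of $0$, yielding the coercivity estimate
\begin{equation*}
\|\m(t,\cdot)-\m_0\|_{\dot H^{1/2}}^2 \geq c_0 \sum_{j=1}^N |A_j(t)|^2,
\end{equation*}
so that if the left-hand side tends to $0$ as $t\to 0$ then every $A_j(t)\to 0$, as claimed. The main obstacle in executing this plan is the explicit computation of $G(t)$ and the verification that $G(0)$ is positive definite; both reduce to routine Fourier/residue calculus together with linear independence of partial-fraction expansions, so once they are in place the conclusion is immediate from coercivity and the boundedness hypothesis on $A_j$ is actually recovered as a byproduct.
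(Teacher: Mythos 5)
There is a genuine gap: you have proved the statement for the limit as literally written, $t\rightarrow 0$, but that reading makes the theorem vacuous (by continuity of the flow, $\|\m(t)-\m_0\|_{\dot H^{1/2}}\rightarrow 0$ as $t\rightarrow 0$ forces $\m(0)=\m_0$, i.e.\ all $A_j(0)=0$), and it is not what the paper proves. The "$t\rightarrow 0$" in the statement is a typo for "$t\rightarrow\infty$": the paper's argument works with the eigenvalues of $P^{-1}X(0)P+t\mathcal{J}_\varepsilon$ for $t$ \emph{large}, obtains $x_j(t)=v_{\phi(j)}t+\OO(t\varepsilon+1)$, and the theorem sits in an appendix whose other results all concern $t\rightarrow\infty$. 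Your argument does not survive the correct reading. The poles do not stay in a compact set; they escape to infinity along rays of slope $v_{[j]}$ determined by the Jordan blocks of $L(0)$, and, after recentering a given cluster, poles belonging to the same block may coalesce in the limit. In that case the functions $\frac{1}{x-x_j(t)}$ become asymptotically linearly dependent, the Gram matrix $G(t)$ degenerates, and no uniform lower bound $c_0>0$ on its smallest eigenvalue exists — which is precisely the obstruction your coercivity step cannot handle.

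The paper's route is structurally different and is forced by this obstruction: it writes $L(0)$ in Jordan form, groups the poles into clusters indexed by the distinct eigenvalues $v_i$, shows that the cross-cluster contributions to $\langle f,f\rangle_{\dot H^{1/2}}$ vanish because inter-cluster distances grow like $t(\eta-2\varepsilon)$, and deduces $\langle f_j,f_j\rangle\rightarrow 0$ for each cluster separately. Within a cluster, where all asymptotic speeds coincide and pole differences are bounded, it then invokes the equal-speed argument of Lemma \ref{isolated}: localizing $|\nabla|^{1/2}f$ near the cluster, passing to a limit by dominated convergence, and extracting the individual residues through a Vandermonde system built from higher derivatives at a point — exactly the device needed when poles coalesce and your linear-independence argument fails. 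To repair your proof you would need to replace the single global Gram matrix by this cluster decomposition plus the coalescence analysis.
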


    \begin{proof}
        The strategy of the proof will be as follows. First, we write $L(0)$ in Jordan form. Then we will combine the arguments used in the case where all eigenvalues are distinct, and in the case when all the eigenvalues are the same. We call $\eta_i$ the multiplicity of $v_i$.
        
        We write for some $P \in GL_N(\mathbb{C})$,
        \[
        L(0) =P \mathcal{J}_\varepsilon P^{-1}= P \begin{bmatrix}
J_1 & \;     & \; \\
\;  & \ddots & \; \\ 
\;  & \;     & J_p\end{bmatrix} P^{-1},~ J_i = \begin{bmatrix}
v_i & \kappa_{i,1}            & \;     & \;  \\
\;        & v_i    & \ddots & \;  \\
\;        & \;           & \ddots & \kappa_{i,\eta_i-1}   \\
\;        & \;           & \;     & v_i       
\end{bmatrix}
        \]
    where for any $i,j$, $\kappa_{i,j} = 0 $ or $\varepsilon$. So, 
    \[
    X(t) = U(t) \left[ X(0) + t P \mathcal{J}_\varepsilon P^{-1} \right] U^{-1}(t)= U(t) P\left[ P^{-1} X(0) P + t \mathcal{J}_\varepsilon \right] P^{-1} U^{-1}(t).
    \]
    In particular, the eigenvalues of $X(t)+3tI_N$ are the eigenvalues of 
    \[
    P^{-1}X(0)P + t \mathcal{J}_\varepsilon + 3 t I_N = K_0 + t \mathcal{J}_\varepsilon + 3 t I_N .
    \]
    Since this matrix is diagonal dominant, we have for some $\phi$ giving the corresponding bloc,
    \[
    |x_j(t)-v_{\phi(j)} t + C_j| \leq |t \varepsilon + D_j|,
    \]
    for some constants $C_j$ and $D_j$, so $|x_j-v_{\phi(j)} t| \leq |t \varepsilon + D_j| + |C_j| \leq t \varepsilon + R $ for $R\geq 0$ bounded. We define $\sim$ the equivalence relation as $j\sim k$ if $\phi(j) \sim \phi(k)$. Then, for $[j]\neq [k]$,
    \[
    |x_i-x_j| \geq \left| |x_i-x_j - t v_{[i]} + t v_{[j]}| - |tv_{[i]}-t v_{[j]}|  \right|.
    \]
    Since $|x_i-x_j - t v_{[i]} + t v_{[j]}| \leq 2 t \varepsilon + 2R$, for $\varepsilon< \eta/2$ and $t$ large enough,
    \[
    |x_i-x_j| \geq t |v_{[i]}-v_{[j]}| - 2 t \varepsilon - 2 R \leq t (\eta - 2 \varepsilon) -2R.
    \]
    Hence, for any $i,j$ such that $[i]\neq [j]$ and $A$ and $B$ bounded,
    \begin{equation}\label{lollol}
    \frac{\langle A, B\rangle}{(x_j-\bar x_k)^2} \xrightarrow[]{t\rightarrow 0} 0.
    \end{equation}

    Now, we write 
    \begin{multline}
    f(t,x)+f^*(t,x)=\m(t,x)-\m_0 =  \sum_{j=1}^N \frac{A_j}{x-x_j} + \sum_{j=1}^N \frac{A_j^*}{x-\bar x_j} \\
    = \sum_{j=1}^p \sum_{k\sim j} \frac{A_k}{x- x_k} + \sum_{j=1}^p \sum_{k\sim j} \frac{A_k^*}{x-\bar x_k} = \sum_{j=1}^p \left( f_j + f_j^*\right).
    \end{multline}
    Hence, \eqref{lollol} gives in particular for $j\neq k$,
    \[
    \left\langle f_j,f_k \right\rangle_{\dot H^{1/2}} \xrightarrow[]{t\rightarrow 0} 0.
    \]
    Hence, $\langle f,f \rangle_{\dot H^{1/2}} \rightarrow 0$ gives in particular 

    \[
    \sum_{j=1}^p \langle f_j,f_j \rangle = o(1).
    \]
    Since all the involved terms are non-negative, we obtain for any $j$, $\langle f_j,f_j \rangle \rightarrow 0$. Using the aforementioned argument when all speeds are equal, regrouping using $i \simeq j$ if $x_i - x_j$ is bounded, we obtain $A_j \rightarrow 0$.

    \end{proof}

    \section{More characterizations for travelingness}

    We now define two properties $\mathcal{H}_1(t)$ and $\mathcal{H}_2(t)$. They are local properties and imply travelingness of the poles.

    \begin{definition}
We say that $\mathcal{H}_1(t)$ is satisfied at $t \in \mathbb{R}$ if 

\begin{equation}\tag{$\mathcal{H}_1(t)$}
    \forall \alpha \in \mathbb{R},~ X(t) \sim X(t) + \alpha[B(t),X(t)].
\end{equation}
We define also define $\mathcal{H}_2(t)$:
\begin{equation}\tag{$\mathcal{H}_2(t)$}
    \exists v\in \mathbb{R}, ~\dot X(t) = v I_N.
\end{equation}
\end{definition}

\begin{lemma}

    We consider $\m$ a solution of \eqref{HWMS}, and define $X$, $L$, $B$ correspondingly. Then, if there exists  $t_0$ such that $\mathcal{H}_1(t_0)$ and $\mathcal{H}_2(t_0)$ are satisfied, $(PT)$ is satisfied, so the poles are traveling.
    
\end{lemma}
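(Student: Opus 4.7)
The plan is to combine the two hypotheses to deduce that $L(t_0)$ differs from $vI_N$ by a matrix whose effect on the eigenvalues of $X$ is (for all time) exactly zero, and then read off the evolution of the poles from Matsuno's similarity formula.

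First, I would decode each hypothesis algebraically at $t_0$. From $\mathcal{H}_2(t_0)$ and the identity $\dot X(t_0)=L(t_0)+[B(t_0),X(t_0)]$ (which is \eqref{propMatsuno} differentiated, or can be read off \eqref{LaxMatsuno}), I get
\[
L(t_0) \;=\; v I_N - [B(t_0),X(t_0)].
\]
Combining this with the similarity formula of \eqref{propMatsuno}, rewritten as $X(t) = U(t)U(t_0)^{-1}\bigl[X(t_0)+(t-t_0)L(t_0)\bigr]U(t_0)U(t)^{-1}$, I conclude that for every $t$, the spectrum of $X(t)$ equals the spectrum of
\[
X(t_0) + (t-t_0)L(t_0) \;=\; (t-t_0)\,v\,I_N \;+\; \Bigl(X(t_0) - (t-t_0)[B(t_0),X(t_0)]\Bigr).
\]

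Next, I would apply $\mathcal{H}_1(t_0)$ with the specific value $\alpha = -(t-t_0)$: the bracketed term is similar to $X(t_0)$ and therefore has the same spectrum $\{x_j(t_0)\}$. Adding the scalar matrix $(t-t_0)v I_N$ just shifts eigenvalues, so
\[
\mathrm{spec}\bigl(X(t_0) + (t-t_0)L(t_0)\bigr) \;=\; \{\,x_j(t_0)+(t-t_0)v\,\}.
\]
But $X(t)$ is diagonal with entries $x_j(t)$, so, as multisets, $\{x_j(t)\} = \{x_j(t_0)+(t-t_0)v\}$ for all $t$. Since $t\mapsto x_j(t)$ is continuous and the poles are simple (so the labelling persists), this forces $x_j(t) = x_j(t_0) + (t-t_0)v$, which is exactly $(PT)$ after rewriting $x_j(t_0) = x_j(0) + t_0 v$ using the same identity evaluated at $t=0$.

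The main (and only nontrivial) step is the use of $\mathcal{H}_1(t_0)$: on its own the structural relation $L(t_0) = vI_N - [B(t_0),X(t_0)]$ would not be enough, because the commutator term could in principle perturb the spectrum of $X(t_0)+(t-t_0)L(t_0)$ away from an arithmetic progression. The hypothesis $\mathcal{H}_1(t_0)$ is precisely the algebraic statement needed to kill that perturbation at the level of spectra — note that this is a weaker requirement than $[B(t_0),X(t_0)]=0$, so the argument does not collapse into the trivial case but rather exploits the full strength of the similarity class being $\alpha$-invariant. Everything else is a direct algebraic manipulation of \eqref{propMatsuno}.
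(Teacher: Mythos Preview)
Your proof is correct and follows essentially the same route as the paper's: both use the similarity formula $X(t)\sim X(t_0)+(t-t_0)L(t_0)$, plug in $L(t_0)=vI_N-[B(t_0),X(t_0)]$ from $\mathcal{H}_2$, pull the scalar $vI_N$ out, and then invoke $\mathcal{H}_1$ (for all $\alpha$) to conclude that the remaining matrix has the same spectrum as $X(t_0)$. You are in fact slightly more careful at the end, handling the multiset-to-labelled-poles step via continuity, which the paper leaves implicit.
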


\begin{proof}
    We assume without loss of generality that $t_0=0$. Using [refref], we write

\begin{equation}
    X(t) = U(t)^{-1} ( t L(0) + X(0)) U(t)= U(t)^{-1} ( t \dot X(0) + t [B(0),X(0)] + X(0)  )U(t)^{-1}.
\end{equation}
    Since $\dot X(0) = vI_N$ commutes with $U(t)$, we obtain

    \begin{equation}
        X(t) - v  t I_N = U(t) ( X(0) + t [B(0),X(0)] ) U(t)^{-1}.
    \end{equation}
    Since $X(t) - v t I_N$ is a diagonal matrix, the entries are equal to the eigenvalues of 
    \begin{equation}
        X(0) + t [B(0),X(0)],
    \end{equation}
    and $\mathcal{H}_1(0)$ gives

    \begin{equation}
        X(0) + t [B(0),X(0)] \sim X(0)
    \end{equation}
    for any $t$. In particular, the characteristic polynomials are identical and we obtain

    \begin{equation}
        X(t) - v I_N \sim X(0),
    \end{equation}
    so $X(t) = X(0)+ v I_N$.
    
\end{proof}

\end{document}